\documentclass[10pt]{amsart}
\usepackage{amsmath}
\usepackage{amssymb,amsfonts}
\input xy
\xyoption{all}

\setlength{\textwidth}{145mm} 

\def\n{\mathrm{N}}
\def\adj{\mathrm{P}\hspace{-5.7pt}\mathrm{P}}
\def\dpi{\ensuremath{\Pi\hspace{-7pt}\Pi}}
\def\dn{\ensuremath{\mathrm{N\hspace{-6pt}N}}}
\def\w{\ensuremath{\overline{\mathrm{W}}}}
\def\dia{\ensuremath{\,\mathrm{diag}}}
\def\dec{\ensuremath{\,\mathrm{Dec}}}
\def\s{\ensuremath{\,\mathrm{S}}}
\newtheorem{theorem}{Theorem}
\newtheorem{proposition}[theorem]{Proposition}
\newtheorem{facts}[theorem]{Facts}
\newtheorem{fact}[theorem]{Fact}

\newtheorem{corollary}[theorem]{Corollary}

\newtheorem{lemma}[theorem]{Lemma}

\numberwithin{equation}{section}

\numberwithin{theorem}{section}

\begin{document}
\title{{Double groupoids and homotopy 2-types}}

\author{A. M. Cegarra \and B. A. Heredia}
\address{Departamento de \'{A}lgebra, Universidad de
Granada, 18071 Granada, Spain} \email{acegarra@ugr.es} \email{fierash@correo.ugr.es}

\author{J. Remedios}
\address{Departamento de Matem\'{a}tica Fundamental, Universidad de La
Laguna. 38271 La Laguna, Spain} \email{jremed@ull.es}

\thanks{The first author acknowledge support from the DGI of Spain (Project: MTM2007-65431); Consejer\'{i}a de Innovaci\'{o}n
de J. de Andaluc\'{i}a (P06-FQM-1889); MEC de Espa\~{n}a, `Ingenio Mathematica(i-Math)' No. CSD2006-00032
(consolider-Ingenio 2010). \\
The second author thanks support to University of Granada (Beca Plan Propio 2009).\\
The third  author acknowledges support from DGI of Spain (Project: MTM2009-12081) and tanks the University of Granada for its support and kind hospitality.}

\begin{abstract} This work contributes to clarifying several relationships between certain higher
categorical structures and the homotopy types of their classifying
spaces. Double categories (Ehresmann, 1963) have well-understood geometric realizations, and here we deal with homotopy types
represented by double groupoids satisfying a natural `filling
condition'. Any such double groupoid characteristically has
associated to it `homotopy groups', which are defined using only
its algebraic structure. Thus arises the notion of `weak
equivalence' between such double groupoids, and a corresponding
`homotopy category' is defined. Our main result in the paper states
that the geometric realization functor induces an equivalence
between  the homotopy category of double groupoids with filling
condition and the category of homotopy 2-types (that is, the homotopy category of all topological spaces
with the property that the $n^{\text{th}}$ homotopy group at any base point vanishes for $n\geq 3$). A quasi-inverse functor is explicitly
given by means of a new `homotopy double groupoid' construction for
topological spaces.
\end{abstract}

\keywords{Double groupoid, classifying space, bisimplicial set,  Kan complex, geometric realization, homotopy type}

\maketitle

\emph{Mathematical Subject Classification:} 18D05, 20L05, 55Q05, 55U40.

\section{Introduction and summary.}
Higher-dimensional categories provide a suitable setting for the treatment of an extensive list of subjects of recognized mathematical interest. The construction of nerves and classifying spaces of higher categorical structures discovers ways to transport categorical coherence to homotopic coherence,  and it has shown its relevance as a tool in algebraic topology, algebraic geometry, algebraic $K$-theory, string field theory,  conformal field theory, and in the study of geometric structures on low-dimensional manifolds.

{\em Double groupoids}, that is, groupoid objects in the category of
groupoids, were introduced by Ehresmann  \cite{eres, eres2} in the
late fifties and later studied by several people because of their
connection with several areas of mathematics.  Roughly, a double
groupoid consists of {\em objects}, {\em horizontal} and {\em
vertical morphisms}, and {\em squares}. Each square, say $\alpha$,
has objects as vertices and morphisms as edges, as in $$
\xymatrix@C=-2pt@R=-1pt{\cdot&&\ar[ll]\cdot&\\&\alpha&\\\cdot\ar[uu]&&\ar[ll]\cdot\ar[uu]&,}\
$$
together with two groupoid compositions- the {\em vertical} and {\em
horizontal compositions}- of squares, and compatible groupoid
compositions of the edges, obeying several conditions (see Section
\ref{h-g} for details).  Any double groupoid $\mathcal{G}$ has a
{\em geometric realization} $|\mathcal{G}|$, which is the
topological space defined by first taking the double nerve $\dn
\mathcal{G}$, which is a bisimplicial set, and then realizing the
diagonal to obtain a space: $|\mathcal{G}|=|\dia\dn \mathcal{G}|$.
In this paper, we address the homotopy types obtained in this way
from double groupoids satisfying a natural {\em filling condition}:
Any filling problem
$$\xymatrix@C=-2pt@R=-1pt{\cdot && \cdot\ar[ll]\\
&\scriptstyle{\exists ?}&\\  \ar@{.>}[uu] & &  \cdot
\ar@{.>}[ll]\ar[uu]}$$ finds  a solution in the double groupoid.
This filling condition on double groupoids is often assumed in the
case of double groupoids arising in different areas of mathematics,
such as in differential geometry  or in weak Hopf algebra theory
(see the papers by Mackenzie \cite{mack} and Andruskiewitsch and
Natale \cite{a-n-2}, for example),  and it is satisfied for those
double groupoids that have emerged with an interest in algebraic
topology,
  mainly thanks to the work of Brown, Higgings, Spencer, {\em et al.}, where the connection of double
  groupoids with crossed modules and a higher Seifert-van Kampen Theory has been established (see, for
  instance, the survey paper \cite{brown} and references therein. Thus, the filling condition is easily
  proven for edge symmetric double groupoids (also called special double groupoids) with connections
  (see, for example \cite{b-h,b-s} or \cite{brown2,b-k-p, b-h-k-k-p}, for more recent instances),
  for double groupoid objects in the category of groups (also termed $\mathrm{cat}^2$-groups,
  \cite{loday,b-c-d,porter}), or, for example, for  2-groupoids (regarded as double  groupoids where one of
  the side groupoids of morphisms is discrete \cite{m-s},\cite{h-k-k}).

When a double groupoid $\mathcal{G}$ has the filling condition, then
there are characteristically associated to it `homotopy groups',
$\pi_i(\mathcal{G},a)$, which we define using only the algebraic
structure of $\mathcal{G}$, and which are trivial for integers $i\geq 3$.
A first major result states that:

\begin{quote}{\em
If $\mathcal{G}$ is a double groupoid with filling condition,
then, for each object $a$,  there are natural
isomorphisms
$\pi_i(\mathcal{G},a)\cong \pi_i(|\mathcal{G}|,|a|),\,\, i\geq 0.$}
\end{quote}

The proof of this result  requires a prior recognition of the significance of the
filling condition on double groupoids in the homotopy theory of simplicial sets; namely, we prove that
\begin{quote}
{\em A double category $\mathcal{C}$ is a double groupoid with
filling condition if and only  if the simplicial set
$\dia\dn\mathcal{C}$ is a Kan complex. }
\end{quote}
 This fact can be seen as a higher version of the  well-known fact that the nerve  of a category
is a Kan complex if and only if the category is a  groupoid (see \cite{illusie}, for example).

Once we have defined the homotopy category of double groupoids satisfying the
filling condition $\mathrm{Ho(\mathbf{DG}_{fc})}$, to be the
localization of the category of these double groupoids, with respect
to the class of {\em weak equivalences} or double functors
$F:\mathcal{G}\to \mathcal{G'}$ inducing isomorphisms
$\pi_iF:\pi_i(\mathcal{G},a)\cong \pi_i(\mathcal{G'},Fa)$ on the
homotopy groups, we then obtain an induced functor
$$|\hspace{5pt}|:\mathrm{Ho(\mathbf{DG}_{fc})}\to \mathrm{Ho(\mathbf{
Top})},\hspace{0.3cm} \mathcal{G}\mapsto |\mathcal{G}|\,,$$ where
$\mathrm{Ho(\mathbf{Top})}$ is  the localization of the category of
topological spaces  with respect to the class of weak equivalences.
Furthermore, we show a new functorial
construction of a {\em homotopy double groupoid} $\dpi X$, for any
topological space $X$, that induces a functor
$$\mathrm{Ho(\mathbf{
Top})}\to \mathrm{Ho(\mathbf{DG}_{fc})},\hspace{0.3cm} X\mapsto \dpi
X.$$

A main goal in this paper is to prove the following result, whose
proof is somewhat indirect since it is given through an explicit
description of a left adjoint functor, $\adj \dashv \dn$, to the
double nerve functor $\mathcal{G}\mapsto \dn \mathcal{G}$:

\begin{quote}{\em
Both induced functors on the homotopy categories $\mathcal{G}\mapsto
|\mathcal{G}|$ and $X\mapsto \dpi X$ restrict by giving mutually
quasi-inverse equivalence of categories
$$\mathrm{Ho(\mathbf{DG}_{fc})}\simeq\mathrm{Ho(\mathbf{
2\text{-}types)}},$$}
\end{quote}
where $\mathrm{Ho(\mathbf{ 2\text{-}types)}}$ is the full
subcategory of the homotopy category of topological spaces given by
those spaces $X$ with $\pi_i(X,a)=0$ for any integer $i\!>2$ and any
base point $a$. From the point of view of this fact, the use of
double groupoids and their classifying spaces in homotopy theory
goes back to Whitehead \cite{whi} and  Mac Lane-Whitehead \cite{mac}
since double groupoids where one of the side groupoids of morphisms
is discrete with only one object (= strict 2-groups, in the
terminology of Baez \cite{baez}) are the same as crossed modules (this observation is attributed to Verdier  in \cite{b-s}). In this context, we should mention the work by Brown-Higgins \cite{b-h} and Moerdijk-Svensson \cite{m-s} since crossed modules over groupoids are essentially the same thing as 2-groupoids and double  groupoids where one of the side groupoids of morphisms is discrete. Along the same line, our result is also a natural 2-dimensional version of the well-known  equivalence between the homotopy category of groupoids and the homotopy category of 1-types (for a useful survey of groupoids in topology, see \cite{brow}).

The plan of this paper is, briefly, as follows. After this
introductory Section 1, the paper is organized in six sections.
Section 2 aims to make this paper as self-contained as possible;
hence, at the same time as
  fixing notations and terminology, we also review  necessary aspects and results from the
 background of (bi)simplicial sets and their geometric realizations that will be used throughout  the paper.
 However, the material in  Section 2 is quite standard, so the expert reader
may skip most of it. The most original part is in Subsection 2.2,
related to the extension condition on bisimplicial sets. In Section
3, after recalling the notion of a double groupoid and fixing notations,
we mainly introduce the homotopy groups $\pi_i(\mathcal{G},a)$, at
any object $a$ of a double groupoid with filling condition
$\mathcal{G}$. Section 4 is dedicated to showing in
detail the construction of the homotopy double groupoid $\dpi X$,
characteristically associated to any topological space $X$. Here, we
prove that a continuous map $X\to Y$ is a weak homotopy
$2$-equivalence (i.e., it induces bijections on the homotopy groups $\pi_i$ for $i\leq 2$) if and only if the induced double functor $\dpi X
\to \dpi Y$ is a weak equivalence. Next, in  Section 5, we
 first address the issue of to have a manageable description  for the bisimplices in
$\dn\mathcal{G}$, the double nerve of a double groupoid, and then we
determine the homotopy type of the geometric realization
$|\mathcal{G}|$ of a double groupoid with filling condition.
Specifically, we prove that the homotopy groups of $|\mathcal{G}|$
are the same as those of $\mathcal{G}$. Our goal in
Section 6 is to prove that the double nerve functor, $\mathcal{G}
\mapsto \dn\mathcal{G}$, embeds, as a reflexive subcategory, the
category of double groupoids satisfying the filling condition into a
certain category of bisimplicial sets. The reflector functor $
K\mapsto \adj K$  works as a bisimplicial version of Brown's
construction in \cite[Theorem 2.1]{brown2}. Furthermore, as we will
prove, the resulting double groupoid $\adj K$  always represents the
homotopy 2-type of the input bisimplicial set $K$, in the sense that
there is a natural weak 2-equivalence $|K|\to |\adj K|$. This result
becomes  crucial in the final Section 7 where, bringing into
play all the previous work, the  equivalence of categories
$\mathrm{Ho(\mathbf{DG}_{fc})}\simeq\mathrm{Ho(\mathbf{
2\text{-}types)}}$ is achieved.

\section{Some preliminaries on bisimplicial sets.}
 This section aims to make this paper as self-contained as possible; Therefore, while
  fixing notations and terminology, we also review  necessary aspects and results from the
 background of (bi)simplicial sets and their geometric realizations used throughout the paper.
 However, the material in this section  is quite standard and, in general, we employ the
 standard symbolism and nomenclature to be found in texts on simplicial homotopy
theory, mainly in \cite{g-j} and \cite{may}, so the expert reader may skip most of it.
The most original part is in Subsection 2.2, related to the extension condition  and the bihomotopy
relation on bisimplicial sets.

\subsection{Kan complexes: Fundamental groupoids and homotopy groups}
~

We start by fixing some notations. In the simplicial category $\Delta$, the generating coface and codegeneracy maps are denoted by $d^i\!:[n-1]\to [n]$ and $s^i\!:[n+1]\to [n]$ respectively. However,  for $L:\Delta^{\!o}\to \mathbf{Set}$ any simplicial set, we write $d_i\!=\!L(d^i)\!:L_n\to L_{n-1}$ and $s_i\!=\!L(s^i)\!:L_n\to L_{n+1}$ for its corresponding face and degeneracy maps.

The {\em standard $n$-simplex} is $\Delta[n]=\Delta(-,[n])$ and, as is usual, we identify any simplicial map $x\!:\Delta[n]\to L$ with the simplex $x(\iota_n)\in L_n$, the image by $x$ of the basic simplex $\iota_n\!=\!id\!:[n]\to [n]$ of $\Delta[n]$. Thus, for example, the {\em $i^\text{th}$-face of $\Delta[n]$} is $d^i\!=\!\Delta(-,d^i)\!:\Delta[n-1]\to\Delta[n]$, the simplicial map with $d^i(\iota_{n-1})=d_i(\iota_n)$. Similarly,  $s^i\!=\!\Delta(-,s^i)\!:\Delta[n+1]\to\Delta[n]$ is the simplicial map that we identify with the degenerated simplex $s_i(\iota_n)$ of $\Delta[n]$.

The {\em boundary} $\partial\!\Delta[n]\subset \Delta[n]$ is the smallest simplicial subset containing all the faces $d^i\!:\Delta[n-1]\to\Delta[n]$, $0\!\leq i\!\leq n$, of $\Delta[n]$. Similarly, for any given $k$ with $0\leq k\leq n$, the {\em $k^{\text{th}}$-horn}, $\Lambda^{\!k}[n]\subset\Delta[n]$, is the smallest simplicial subset containing all the faces $d^i\!:\Delta[n-1]\to\Delta[n]$ for $0\!\leq i\!\leq n$ and $i\neq k$. For a more geometric (and useful) description of these simplicial sets, recall that there are coequalizers
$$
\bigsqcup\limits_{\scriptsize{0\leq i<j\leq n
}}\Delta[n-2]\rightrightarrows\bigsqcup\limits_{0\leq i\leq n}\Delta[n-1]\to\partial\!\Delta[n],
$$
\noindent and
\begin{equation}\label{horns}
\bigsqcup\limits_{\scriptsize{\begin{array}{c}0\!\leq\! i\!<\!j\!\leq\! n\\
i\neq k\neq j\end{array}}}\hspace{-6pt}\Delta[n-2]\rightrightarrows\hspace{-10pt}\bigsqcup\limits_{\scriptsize{\begin{array}{c}0\!\leq\! i\!\leq\! n\\i\neq k\end{array}}}\hspace{-4pt}\Delta[n-1]\to \Lambda^{\!k}[n],\hspace{10pt}~\end{equation}
given by the relations $d^jd^i=d^id^{j-1}$ if $i\!<\!j$.

A simplicial set $L$ is a {\em Kan complex} if it satisfies the so-called {\em extension condition}. Namely, for any simplicial diagram
$$
\xymatrix@C=16pt@R=16pt{\Lambda^{\!k}[n]\ar[r]\ar@{^{(}->}[d]&L\\ \Delta[n]\ar@{.>}[ur]&}
$$
there is a map $\Delta[n]\to L$ (the dotted arrow) making the
diagram commute.

In a Kan complex $L$, two simplices $x,x'\!:\Delta[n]\to L$ are said to be {\em homotopic} whenever they have the same faces and there is a {\em homotopy}  from $x$ to $x'$, that is, a simplex $y\!:\Delta[n+1]\to L$ making this diagram commutative
$$
\xymatrix@C=120pt{\partial\Delta[n+1]\ar[r]^{(xd^0\!s^{n-1}\!,\,\cdots,xd^{n-1}\!s^{n-1}\!,\,x,\,x')}\ar@{^{(}->}[d]&L.\\ \Delta[n+1]\ar[ur]_y&}
$$
  Being homotopic establishes an equivalence relation on the simplices of $L$, and we  write $[x]$ for the homotopy class of a simplex $x$. A useful result is the follows:

\begin{fact}\label{p1} Let $y,y'\!:\!\Delta[n+1]\to L$ be two simplices such that $[yd^i]=[y'd^i]$ for all
$i\neq k$; then $[yd^k]=[y'd^k]$.\end{fact}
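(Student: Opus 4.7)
The statement is a homotopical compatibility for faces of simplices in a Kan complex: the homotopy classes of all but one face of an $(n{+}1)$-simplex rigidly determine the last. I plan to prove it by Kan-filling a horn $\Lambda^{k}[n{+}2]\subset\Delta[n{+}2]$ in $L$ whose missing $k$-th face will itself be the desired homotopy from $yd^k$ to $y'd^k$.

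First, for each $i\neq k$ I would fix a homotopy $h^i$ from $yd^i$ to $y'd^i$ provided by the hypothesis. The simplicial identities $d_id_j=d_{j-1}d_i$ (for $i<j$) show that $yd^k$ and $y'd^k$ already share the same boundary, since each $d_\ell d_k y$ can be re-expressed in terms of $d_\ell(yd^i)$ or $d_{k}(yd^i)$ for some $i\neq k$, and homotopic simplices share their boundaries. Then I would reduce to the case $yd^i=y'd^i$ (strict equality), for every $i\neq k$: using each $h^i$ in turn, fill a suitable $(n{+}2)$-horn to replace the $i$-th face of the current simplex by $yd^i$ while leaving the other faces unchanged; iterating yields an $(n{+}1)$-simplex $y''$ with $y''d^i=yd^i$ for all $i\neq k$ and $y''d^k$ homotopic to $y'd^k$.

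For this reduced case I would build the horn $\Lambda^{k}[n{+}2]\to L$ as follows: place $y$ and $y''$ at two adjacent face-positions (say $n{+}1$ and $n{+}2$ when $k\leq n$; a mirror assembly obtained by reversing indices handles $k=n{+}1$) so that the crucial $(n{+}1,n{+}2)$-compatibility $d_{n+1}y=d_{n+1}y''$ is automatic from the reduction; fill the remaining non-$k$ positions by the degenerate $(n{+}1)$-simplices $s_n(yd^i)$, which are tautological homotopies $yd^i\sim yd^i$. Compatibility among these pieces reduces to $d_\ell s_n=s_{n-1}d_\ell$ for $\ell<n$. The Kan extension property then yields an $(n{+}2)$-simplex $W$, and a direct computation via the simplicial identities shows that the $\ell$-th face of $d_kW$ equals $s_{n-1}d_\ell d_ky$ for $\ell<n$, $d_ky$ for $\ell=n$, and $d_ky''$ for $\ell=n{+}1$; this is precisely the shape of a homotopy $yd^k\sim y''d^k$. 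Combined with $y''d^k\sim y'd^k$ from the reduction, this gives $[yd^k]=[y'd^k]$.

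The main obstacle will be the iterative face-replacement step: it itself requires a preliminary Kan-filling lemma, and the boundary compatibilities at each stage must be verified using the simplicial identities plus the shared-boundary property of homotopic simplices (with the non-trivial observation that $yd^k$ and $y'd^k$ already share a boundary). The corner case $k=n{+}1$ of the main construction is the other delicate point, requiring a symmetric mirror placement of $y$ and $y''$ in the horn; once the assembly is correctly indexed, the computation of the filled face proceeds exactly as in the generic case.
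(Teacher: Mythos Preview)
The paper does not prove this Fact; it is stated without proof as a standard result of simplicial homotopy theory (it is part of the groundwork in May's \emph{Simplicial Objects in Algebraic Topology} and in Goerss--Jardine). Your outline is correct and follows the classical route.

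One remark on the reduction step, which you flag as the main obstacle: it is in fact cleaner than you suggest. In a Kan complex, given $z\in L_{n+1}$ and a homotopy $h$ from $zd^i$ to some $a$ (with the same faces), one can replace the $i$-th face of $z$ by $a$ without disturbing \emph{any} other face. Concretely, for $i\le n$, the horn $\Lambda^{\!n+2}[n+2]\to L$ with $w_{n+1}=z$, $w_i=h$, and $w_j=s_n(zd^j)$ for the remaining $j\le n$ is compatible (the check uses only $d_\ell s_n=s_{n-1}d_\ell$ for $\ell<n$ together with the face identities); its filler $W$ yields $z'=Wd^{n+2}$ with $z'd^i=a$ and $z'd^j=zd^j$ for every $j\neq i$. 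The case $i=n+1$ is handled symmetrically. Iterating over $i\neq k$ therefore produces $y''$ with $y''d^i=yd^i$ for all $i\neq k$ and $y''d^k=y'd^k$ on the nose, so no auxiliary homotopy needs to be tracked at the $k$-th face. Your horn-filling for the reduced case is then exactly right: the faces of $Wd^k$ come out as you describe, and the mirror placement for $k=n+1$ poses no difficulty.
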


The {\em fundamental groupoid} of $L$, denoted  $\mathrm{P} L$, also called its Poincar\'e groupoid, has as objects the
vertices $a\!:\Delta[0]\to L$, and a morphism $[x]\!:a\to b$ is the homotopy class of a simplex $x\!:\Delta[1]\to L$
with $xd^0=a$ and $xd^1=b$. The composition in $\mathrm{P} L$  is defined by
$$[x]\circ [x']=[yd^1],$$ where $y:\Delta[2]\to L$ is any simplex with $yd^2=x$ and $yd^0=x'$, and
the identities are $\mathrm{I}a=[as^0]$.

The set of {\em path components} of $L$, denoted as $\pi_0L$, is the set
of connected components of $\mathrm{P} L$, so it consists of all
homotopy classes of the 0-simplices of $L$. For any given vertex of
the Kan complex  $a\!:\!\Delta[0]\to L$, $\pi_0(L,a)$ is the set
$\pi_0L$, pointed by $[a]$, the component of $a$.  The group of
automorphisms of $a$ in the fundamental groupoid of $L$ is
$\pi_1(L,a)$, the {\em fundamental group} of $L$ at $a$.
Furthermore, denoting every composite map $\Delta[m]\to
\Delta[0]\overset{a}\to L$ by $a$ as well, the $n^{\text{th}}$ {\em
homotopy group} $\pi_n(L,a)$ of $L$ at $a$ consists of homotopy
classes of simplices $x\!:\!\Delta[n]\to L$ for all simplices $x$
with faces $xd^i=a$, for $0\!\leq i\!\leq n$. The multiplication in
the (abelian, for $n\!\geq 2$) group $\pi_n(L,a)$ is given by
$$[x]\circ [x']=[yd^n],$$ where $y\!:\!\Delta[n+1]\to L$ is a (any)
solution to the extension problem
$$
\xymatrix@C=60pt@R=20pt{\Lambda^{\!n}[n+1]\ar[r]^-{(a,\,\cdots,a,\,x',\,-,\,x)}\ar@{^{(}->}[d]&L.\\
\Delta[n+1]\ar@{.>}[ur]_y&}
$$

The following fact is used several times throughout the paper:

\begin{fact}\label{p2}Let $L$ be a Kan complex and $n$ an integer such that the homotopy groups
$\pi_n(L,a)$ vanish for all base vertices $a$. Then, every extension problem
$$
\xymatrix@R=20pt{\partial\Delta[n+1]\ar[r]\ar@{^{(}->}[d]&L\\
\Delta[n+1]\ar@{.>}_{\exists ?}[ur]&}
$$
has a solution. In particular, any two $n$-simplices with the same faces are homotopic.
\end{fact}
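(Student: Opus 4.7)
The plan is to reduce the statement to a single claim: any two $n$-simplices $x,x':\Delta[n]\to L$ with the same faces are homotopic. The two assertions of the fact are essentially equivalent given this claim. On the one hand, the ``in particular'' statement is simply the specialization to $x,x'$ sharing a boundary: the tuple $(xd^0s^{n-1},\ldots,xd^{n-1}s^{n-1},x,x')$ satisfies the $\partial\Delta[n+1]$-compatibility conditions---a direct computation using $d_is_{n-1}=s_{n-2}d_i$ for $i<n-1$, $d_{n-1}s_{n-1}=d_ns_{n-1}=\mathrm{id}$, and the hypothesis $xd^i=x'd^i$---and assembles into a map $\partial\Delta[n+1]\to L$, any extension of which is by definition a homotopy $x\simeq x'$.

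On the other hand, the general extension property follows from the claim by one additional Kan filling. Given $f:\partial\Delta[n+1]\to L$ with faces $f_i=fd^i$, I would apply the Kan condition to the horn $\Lambda^{n+1}[n+1]\to L$ determined by $(f_0,\ldots,f_n)$ to obtain $y:\Delta[n+1]\to L$ with $yd^i=f_i$ for $i\le n$. The simplicial identity $d_id_{n+1}=d_nd_i$ for $i\le n$ combined with the $\partial\Delta[n+1]$-compatibility of $f$ shows that $x:=yd^{n+1}$ and $f_{n+1}$ share all their faces, so by the claim there is a homotopy $h:x\simeq f_{n+1}$; a further Kan filling of a $\Lambda^{n+1}[n+2]$-horn assembled from $y$, $h$, and appropriate degenerate $(n+1)$-simplices then produces the desired extension $y'$ of $f$, with $y'd^i=f_i$ for every $i$.

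The heart of the proof is thus the claim itself, and this is where the hypothesis $\pi_n(L,a)=0$ enters. Given $x,x'$ with common boundary $w_i=xd^i=x'd^i$, let $a=xv=x'v$ where $v:\Delta[0]\to\Delta[n]$ picks out the initial vertex. By a sequence of Kan fillings one ``rectifies'' $x$ and $x'$ into $n$-simplices $\tilde x,\tilde x'$ all of whose faces are the constant $(n-1)$-simplex at $a$, in a manner that preserves the homotopy class---each face $w_i$ is interpolated to the constant simplex at $a$ via a filler of a horn built from degeneracies $w_is^{n-2}\cdots s^0$ and the common face data. The classes $[\tilde x],[\tilde x']\in\pi_n(L,a)$ are both trivial by hypothesis, hence equal, which on unwinding the rectification yields $[x]=[x']$. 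The main obstacle will be this last step: the explicit construction of the rectification $x\mapsto\tilde x$ and the verification that homotopy classes are preserved. It is a routine but technically involved exercise in iterated horn-filling together with tracking the simplicial identities---amounting essentially to exhibiting the set of homotopy classes of $n$-simplices with fixed compatible boundary as a torsor under $\pi_n(L,a)$, which is then forced to be a singleton by the vanishing hypothesis.
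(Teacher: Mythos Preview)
The paper does not prove this statement: it is recorded in Section~2 as a standard fact from simplicial homotopy theory (with Goerss--Jardine and May cited as general references for the preliminaries). There is therefore no proof in the paper to compare against.

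Your sketch is correct. The reduction of the extension problem to the claim that any two $n$-simplices with the same boundary are homotopic is sound, and the two horn-fillings you describe do exactly what is needed: first fill $\Lambda^{n+1}[n+1]$ to produce $y$ with $yd^i=f_i$ for $i\le n$; then, using a homotopy $h$ from $yd^{n+1}$ to $f_{n+1}$, fill the horn $\Lambda^{n+1}[n+2]$ with entries $g_i=f_is^{n-1}$ for $i<n$, $g_n=y$, $g_{n+2}=h$ to obtain $G$ whose $(n{+}1)$-st face $y'=Gd^{n+1}$ is the desired extension of $f$. The heart of the matter---that the set of homotopy classes of $n$-simplices with a fixed compatible boundary is a $\pi_n(L,a)$-torsor, hence a singleton when $\pi_n(L,a)=0$---is indeed the standard argument, and your ``rectification'' description is one acceptable way to organize it. You correctly flag this step as routine but technically involved.
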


We shall end this preliminary subsection by recalling that two simplicial maps ${f,g\!:\!L\to L'}$
are {\em homotopic} whenever there is a map $L\times\Delta[1]\to L'$ which is $f$ on
${L\times{0}}$ and $g$ on $L\times {1}$. The resulting homotopy relation becomes a congruence on
the category {\bf KC} of Kan complexes, and the corresponding quotient category is the
{\em homotopy category of Kan complexes}, Ho({\bf KC}).
A map between Kan complexes is a {\em homotopy equivalence} if it induces an isomorphism in
the homotopy category. There is an analogous result of Whitehead's theorem on CW-complexes to Kan complexes:

\begin{fact} A simplicial map between Kan complexes, ${L\to L'}$, is a
homotopy equivalence if and only if it induces an isomorphism
$\pi_i(L,a)\cong\pi_i(L',fa)$ for all base vertex $a$ of $L$ and
any integer $i\geq 0$.
\end{fact}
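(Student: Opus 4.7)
The forward direction is immediate: if $f\!:L\to L'$ is a homotopy equivalence with inverse $g$, functoriality of $\pi_i$ on $\mathrm{Ho}(\mathbf{KC})$ makes $\pi_i(f)$ invertible with inverse $\pi_i(g)$, so $\pi_i(f)$ is a bijection at every base vertex and every $i\geq 0$.

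For the converse, suppose $f$ induces isomorphisms on all $\pi_i$. The plan is to construct a simplicial homotopy inverse to $f$ by factoring it through a trivial Kan fibration. Applying the small object argument to the set of horn inclusions $\Lambda^{\!k}[n]\hookrightarrow\Delta[n]$ yields a factorisation $f=p\circ j$ in which $j\!:L\hookrightarrow M$ is an anodyne extension and $p\!:M\to L'$ is a Kan fibration; since $L'$ is a Kan complex, so is $M$. Every anodyne extension of Kan complexes is a homotopy equivalence (a standard fact, provable directly by successive lifts against horns), so $j$ induces isomorphisms on all $\pi_i$, and two-out-of-three forces $p$ to do so too. The long exact sequence of homotopy groups of the Kan fibration $p$ then yields $\pi_i(F,x)=0$ for every fibre $F=p^{-1}(a')$, every vertex $x\in F$, and every $i\geq 0$.

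The main step, and what I expect to be the main obstacle, is to show that such a $p$ has the right lifting property with respect to every boundary inclusion $\partial\Delta[n]\hookrightarrow\Delta[n]$; equivalently, $p$ is a \emph{trivial} Kan fibration. This is an obstruction-theoretic argument built from the hypotheses of the paper. Given a lifting problem with boundary $\beta\!:\partial\Delta[n]\to M$ and base $\sigma\!:\Delta[n]\to L'$ satisfying $p\,\beta=\sigma|_{\partial\Delta[n]}$, one first applies the Kan condition on $p$ (filling the horn $\Lambda^{\!0}[n]\hookrightarrow\partial\Delta[n]$ together with $\sigma$ as base) to obtain a candidate $n$-simplex $\tilde\sigma$ of $M$ that lifts $\sigma$ and agrees with $\beta$ on every face except $d^0$. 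The two candidates $\beta\,d^0$ and $\tilde\sigma\,d^0$ share the same boundary and the same image in $L'$, and suitably organised they represent an element of $\pi_{n-1}(F,x)$ for $F$ a fibre over an appropriate vertex; this element vanishes by the previous paragraph, so Fact 2.2 applied inside $F$, followed by one further Kan filling in $M$, modifies $\tilde\sigma$ to an $n$-simplex extending all of $\beta$.

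Finally, a trivial Kan fibration $p$ admits a section $s\!:L'\to M$ by lifting $\mathrm{id}_{L'}$ against $\emptyset\hookrightarrow L'$, and a homotopy $s\,p\simeq\mathrm{id}_M$ by lifting against the cofibration $(M\sqcup M)\hookrightarrow M\times\Delta[1]$; hence $p$ is a homotopy equivalence. Applying the analogous argument to the anodyne extension $j$ produces a homotopy inverse $r\!:M\to L$, and $r\circ s\!:L'\to L$ is the desired homotopy inverse of $f$.
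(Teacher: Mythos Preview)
The paper does not supply a proof of this statement: it appears in the preliminaries as a background Fact, with the surrounding text flagging the material as ``quite standard'' and referring the reader to texts such as Goerss--Jardine and May. There is thus no paper argument to compare against.

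Your outline is the standard proof (essentially what one finds in those references): factor $f$ as an anodyne extension followed by a Kan fibration via the small object argument, reduce to showing that a Kan fibration which is a $\pi_*$-isomorphism is a trivial fibration, and handle the latter by fibrewise obstruction theory. The plan is correct. Two points you should not leave implicit in a full write-up: the case $n=0$ of the lifting problem (surjectivity of $p$ on vertices) does not fit the horn-filling template and needs path-lifting along the Kan fibration together with the $\pi_0$-bijection, argued separately; and the step ``$\beta\,d^0$ and $\tilde\sigma\,d^0$ \dots\ represent an element of $\pi_{n-1}(F,x)$'' requires care, since their common boundary need not be concentrated at a single vertex --- one must first use the fibration property to homotope the problem to one in which the discrepancy genuinely lives in a fibre with degenerate boundary. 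Both steps are routine, but they are where the actual work happens.
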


\subsection{Bisimplicial sets: The extension condition and the bihomotopy relation}\label{s22}
~

It is often convenient to view a bisimplicial set
$K\!:\Delta^{\!o}\times \Delta^{\!o}\to \mathbf{Set}$ as a
(horizontal) simplicial object in the category of (vertical)
simplicial sets. For this case, we  write
${d_i^{\mathrm{h}}\!=\!K(d^i,id)\!:\!K_{p,q}\to K_{p-1,q}}$ and
$s_i^{\mathrm{h}}\!=\!K(s^i,id):K_{p,q}\to K_{p+1,q}$ for the
horizontal face and degeneracy maps, and, similarly
$d_j^{\mathrm{v}}=K(id,d^j)$ and $s_j^{\mathrm{v}}=K(id,s^j)$ for the
vertical ones.

For simplicial sets $X$ and $Y$, let $X\!\otimes\! Y$ be the
bisimplicial set with $(X\!\otimes\! Y)_{p,q}=X_p\!\times\! Y_q$.
The {\em standard $(p,q)$-bisimplex} is
$$\Delta[p,q]:=\Delta\!\times\!\!\Delta(-,([p],[q]))=\Delta[p]\otimes\Delta[q],$$
the bisimplicial set represented by the object $([p],[q])$, and
usually we identify any bisimplicial map $x:\Delta[p,q]\to K$ with
the $(p,q)$-bisimplex $x(\iota_p,\iota_q)\in K$. The functor
$([p],[q])\mapsto \Delta[p,q]$ is then a co-bisimplicial bisimplicial
set, whose cofaces and codegeneracy operators are denoted by
$d_{\mathrm{h}}^{i}$, $d^j_{\mathrm{v}}$, and so on,  as in the
diagram$$\xymatrix@C=50pt{\Delta[p-1,q]\ar@<0.5ex>[r]^-{d^i_{\mathrm{h}}=d^i\otimes
id}&\Delta[p,q]\ar@<0.5ex>[l]^-{s^i_{\mathrm{h}}=s^i\otimes
id}\ar@<-0.5ex>[r]_-{s^j_{\mathrm{v}}=id\otimes
s^j}&\Delta[p,q-1]\ar@<-0.5ex>[l]_-{d^j_{\mathrm{v}}=id\otimes
d^j}}.$$

The $(k,l)^{\text{th}}$-{\em horn}  $\Lambda^{\!{k,l}}[p,q]$, for
any integers $0\!\leq k\!\leq p$ and  $0\!\leq l\!\leq q$, is the
bisimplicial subset of $\Delta[p,q]$ generated by the horizontal and
vertical faces
$\Delta[p-1,q]\overset{d^i_{\mathrm{h}}}\hookrightarrow \Delta[p,q]$
and $\Delta[p,q-1]\overset{d^j_{\mathrm{v}}}\hookrightarrow
\!\Delta[p,q]$ for all $i\neq k$ and $j\neq l$. There is a natural
pushout diagram
$$
\xymatrix@C=14pt@R=18pt{\Lambda^{\!k}[p]\otimes\Lambda^{\!l}[q]\ar@{^{(}->}[r]\ar@{^{(}->}[d]&\Delta[p]
\otimes\Lambda^{\!l}[q]\ar@{^{(}->}[d]\\
\Lambda^{\!k}[p]\otimes\Delta[q]\ar@{^{(}->}[r]&
\Lambda^{\!{k,l}}[p,q],}
$$
which, taking into account the coequalizers (\ref{horns}), states that the system of data to define a bisimplicial map $x:\Lambda^{\!{k,l}}[p,q]\to K$ consists of a list of bisimplices $$x=(x_0,\dots,x_{k-1},-,x_{k+1},\dots,x_p;x'_0,\dots,x'_{l-1},-,x'_{l+1},\dots,x'_q),$$
where $x_i:\Delta[p-1,q]\to K$ and $x'_{\hspace{-2pt}j}:\Delta[p,q-1]\to K$, such that the following compatibility conditions hold:
\begin{itemize}
\item[-] $x_{\hspace{-2pt}j}d^i_{\mathrm{h}}=x_id^{j-1}_{\mathrm{h}}$, \ for all $0\leq i<j\leq p$ with $i\neq k\neq j$,
\vspace{0.2cm}
\item[-] $x'_{\hspace{-2pt}j}d^i_{\mathrm{v}}=x'_id^{j-1}_{\mathrm{v}}$, \ for all $0\leq i<j\leq q$ with $i\neq l\neq j$,
\vspace{0.2cm}
\item[-]$x'_{\hspace{-2pt}j}d^i_{\mathrm{h}}=x_id^{j}_{\mathrm{v}}$\,, \ \ \ \,for all $0\leq i\leq p$, $0\leq j\leq q$ with $i\neq k$, $j\neq l$.
\end{itemize}

A bisimplicial set $K$ satisfies the {\em extension condition} if all simplicial sets $K_{p,*}$ and $K_{*,q}$ are Kan complexes, that is, if any of the extension problems
$$
\xymatrix@R=14pt@R=18pt{\Delta[p]\otimes\Lambda^{\!l}[q]\ar[r]\ar@{^{(}->}[d]&K\\
\Delta[p,q]\ar@{.>}_{\exists ?}[ur]&} \hspace{0.6cm}
\xymatrix@R=14pt@R=18pt{\Lambda^{\!k}[p]\otimes\Delta[q]\ar[r]\ar@{^{(}->}[d]&K\\
\Delta[p,q]\ar@{.>}_{\exists ?}[ur]&}
$$ has a solution, and, moreover, if there is also a solution for any extension problem of the form
$$
\xymatrix@R=18pt{\Lambda^{\!k,l}[p,q]\ar[r]\ar@{^{(}->}[d]&K\\
\Delta[p,q]\ar@{.>}_{\exists ?}[ur]&}
$$

When a bisimplicial set $K$ satisfies the extension condition, then
every bisimplex ${x:\Delta[p,q]\to K}$, which can be regarded both
as a simplex of the vertical Kan complex $K_{p,*}$ and as a simplex of
the horizontal Kan complex $K_{*,q}$, defines both a {\em vertical
homotopy class},  denoted by $[x]_{\mathrm{v}}$, and a
{\em horizontal homotopy class},  denoted by
$[x]_{\mathrm{h}}$. The following lemma is needed.

\begin{lemma}\label{p3} Let $x,x':\Delta[p,q]\to K$ be bisimplices of bisimplicial set $K$, which satisfies the extension condition.
The following conditions are equivalent:

i) There exists $y:\Delta[p,q]\to K$ such that
$[x]_{\mathrm{h}}=[y]_{\mathrm{h}}$ and
$[y]_{\mathrm{v}}=[x']_{\mathrm{v}}$,

ii) There exists  $z:\Delta[p,q]\to K$ such that
$[x]_{\mathrm{v}}=[z]_{\mathrm{v}}$ and
$[z]_{\mathrm{h}}=[x']_{\mathrm{h}}$.

\end{lemma}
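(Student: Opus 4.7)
The plan is to prove $(i)\Rightarrow(ii)$; since the conditions are symmetric under the interchange of horizontal and vertical structure, the reverse implication follows by the same argument. Assume (i) and fix a horizontal homotopy $H:\Delta[p+1,q]\to K$ from $x$ to $y$ together with a vertical homotopy $V:\Delta[p,q+1]\to K$ from $y$ to $x'$. The strategy is to produce the desired intermediate bisimplex $z$, together with a vertical homotopy from $x$ to $z$ and a horizontal homotopy from $z$ to $x'$, as faces of a single $(p+1,q+1)$-bisimplex $W:\Delta[p+1,q+1]\to K$ obtained by filling a suitable horn.

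Concretely, I would set up a horn map $\Lambda^{p,q+1}[p+1,q+1]\to K$ by declaring the horizontal face at position $p+1$ to be $V$ and the vertical face at position $q$ to be $H$, so that the two missing faces $d^{p}_{\mathrm{h}}$ and $d^{q+1}_{\mathrm{v}}$ are precisely the ones that will deliver the required data. The remaining horizontal faces at indices $i<p$ and vertical faces at indices $j<q$ are chosen to be iterated degeneracies, for instance $x d^{i}_{\mathrm{h}} s^{p-1}_{\mathrm{h}} s^{q}_{\mathrm{v}}$ and $y d^{j}_{\mathrm{v}} s^{q-1}_{\mathrm{v}} s^{p}_{\mathrm{h}}$ respectively. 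The critical meeting condition at the intersection is $d^{q}_{\mathrm{v}} V = y = d^{p+1}_{\mathrm{h}} H$, which holds automatically; the remaining compatibilities $x'_{j} d^{i}_{\mathrm{h}} = x_{i} d^{j}_{\mathrm{v}}$ then reduce, via the bisimplicial identity $d^{i}_{\mathrm{h}} d^{j}_{\mathrm{v}} = d^{j}_{\mathrm{v}} d^{i}_{\mathrm{h}}$, to the equalities $x d^{i}_{\mathrm{h}} = y d^{i}_{\mathrm{h}}$ supplied for free by the horizontal homotopy.

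Applying the bisimplicial extension condition yields a filler $W$. Set $z := d^{p}_{\mathrm{h}} d^{q+1}_{\mathrm{v}} W = d^{q+1}_{\mathrm{v}} d^{p}_{\mathrm{h}} W$. The face $d^{p}_{\mathrm{h}} W:\Delta[p,q+1]\to K$ has top vertical faces $d^{q}_{\mathrm{v}} d^{p}_{\mathrm{h}} W = d^{p}_{\mathrm{h}} H = x$ and $d^{q+1}_{\mathrm{v}} d^{p}_{\mathrm{h}} W = z$, and its lower-index vertical faces are the standard degeneracies of $x$; it is therefore a vertical homotopy from $x$ to $z$, whence $[x]_{\mathrm{v}} = [z]_{\mathrm{v}}$. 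Dually, $d^{q+1}_{\mathrm{v}} W$ is a horizontal homotopy from $z$ to $d^{p+1}_{\mathrm{h}} d^{q+1}_{\mathrm{v}} W = d^{q+1}_{\mathrm{v}} V = x'$, giving $[z]_{\mathrm{h}} = [x']_{\mathrm{h}}$. The main obstacle is the combinatorial bookkeeping needed to exhibit the degenerate faces at indices $i<p$ and $j<q$ so that all horn compatibilities hold simultaneously; this is a routine but careful manipulation of the simplicial identities, and nothing more conceptually delicate is required.
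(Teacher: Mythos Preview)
Your overall strategy is exactly the paper's: fill a $\Lambda^{p,q+1}[p+1,q+1]$-horn whose $(p+1)$-st horizontal face is the vertical homotopy $V$ and whose $q$-th vertical face is the horizontal homotopy $H$, then read off $z$ and the two new homotopies from the missing faces of the filler. That part is fine.

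The gap is in your choice of the remaining degenerate faces. Your candidates $x_i=x\,d^i_{\mathrm h}s^{p-1}_{\mathrm h}s^q_{\mathrm v}$ (for $i<p$) and $x'_j=y\,d^j_{\mathrm v}s^{q-1}_{\mathrm v}s^p_{\mathrm h}$ (for $j<q$) do \emph{not} satisfy the horn compatibilities you have suppressed, namely the purely horizontal ones $x_{p+1}d^i_{\mathrm h}=x_i d^p_{\mathrm h}$ and the purely vertical ones $x'_q d^i_{\mathrm v}=x'_i d^{q-1}_{\mathrm v}$. For instance, with your $x_i$ one computes $x_i d^p_{\mathrm h}=x\,d^i_{\mathrm h}s^q_{\mathrm v}=y\,d^i_{\mathrm h}s^q_{\mathrm v}$, while $x_{p+1}d^i_{\mathrm h}=V d^i_{\mathrm h}$; but $V d^i_{\mathrm h}$ is an arbitrary element of $K_{p-1,q+1}$ (a vertical homotopy constrains only vertical faces), and applying $d^{q+1}_{\mathrm v}$ to the two sides gives $x' d^i_{\mathrm h}$ versus $y\,d^i_{\mathrm h}$, which have no reason to agree. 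The same issue resurfaces in your final paragraph: with your $x'_j$, the lower vertical faces of $d^p_{\mathrm h}W$ are $y\,d^j_{\mathrm v}s^{q-1}_{\mathrm v}$, not $x\,d^j_{\mathrm v}s^{q-1}_{\mathrm v}$, so $d^p_{\mathrm h}W$ is not a vertical homotopy out of $x$.

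The fix, which is what the paper does, is to build the degenerate faces from the homotopies themselves rather than from $x$ and $y$: take $x_i=V d^i_{\mathrm h}s^{p-1}_{\mathrm h}$ for $i<p$ and $x'_j=H d^j_{\mathrm v}s^{q-1}_{\mathrm v}$ for $j<q$. Then the compatibilities with $x_{p+1}=V$ and $x'_q=H$ are tautological, the mixed compatibilities still reduce to $x d^i_{\mathrm h}=y d^i_{\mathrm h}$ as you observed, and the missing faces $d^p_{\mathrm h}W$, $d^{q+1}_{\mathrm v}W$ come out with exactly the face pattern required of homotopies from $x$ to $z$ and from $z$ to $x'$.
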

\begin{proof} We only prove that $i)$ implies $ii)$ since the proof for the other implication is similar. Let $\alpha:\Delta[p+1,q]\to K$ be a horizontal homotopy (i.e., a homotopy in the Kan complex $K_{*,q}$) from $x$ to $y$, and let $\beta:\Delta[p,q+1]\to K$ be a vertical homotopy from $y$ to $x'$. Since $K$ satisfies the extension condition, a bisimplicial map $\Gamma:\Delta[p+1,q+1]\to K$ can be found such that the diagram below commutes.
$$
\xymatrix@C=210pt{\Lambda^{\!p,q+1}[p+1,q+1]\ar[r]^-{(\beta
d^0_{\mathrm{h}}\!s^{p-1}_{\mathrm{h}}\!,\dots,\,\beta
d^{p-1}_{\mathrm{h}}\!s^{p-1}_{\mathrm{h}}\!,\,-,\,\beta;\, \alpha
d^0_{\mathrm{v}}\!s^{q-1}_{\mathrm{v}}\!,\dots,\,\alpha
d^{q-1}_{\mathrm{v}}\!s^{q-1}_{\mathrm{v}}\!,\,\alpha,\,-)}\ar@{^{(}->}[d]&K\\
\Delta[p+1,q+1]\ar[ur]_\Gamma&}
$$

Then, by taking $\alpha'=\Gamma
d^{q+1}_{\mathrm{v}}:\Delta[p+1,q]\to K$, $\beta'=\Gamma
d^p_{\mathrm{h}}:\Delta[p,q+1]\to K$, and
$z=\alpha'd^p_{\mathrm{h}}=\beta'd^{q+1}_{\mathrm{v}}:\Delta[p,q]\to
K$, one sees that $\alpha'$ becomes a horizontal homotopy (i.e., a
homotopy in $K_{*,q}$) from $z$ to $x'$ and  $\beta'$ becomes
a vertical homotopy from $x$ to $z$. Therefore,
$[x]_{\mathrm{v}}=[z]_{\mathrm{v}}$ and
$[z]_{\mathrm{h}}=[x']_{\mathrm{h}}$, as required.
\end{proof}

The two simplices $x,x':\Delta[p,q]\to K$ in the above Lemma \ref{p3} are said to be {\em bihomotopic} if the equivalent conditions i) and ii) hold.

\begin{lemma}\label{p4}
If $K$ is a bisimplicial set satisfying the extension condition, then `to be bihomotopic' is an equivalence relation on the bisimplices of bidegree $(p,q)$ of $K$, for any $p,q\geq 0$.
\end{lemma}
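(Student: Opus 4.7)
The plan is to check the three equivalence-relation axioms for bihomotopy on the bisimplices of bidegree $(p,q)$ in $K$, systematically leveraging Lemma~\ref{p3} together with the standard fact that ordinary horizontal and vertical simplicial homotopy are themselves equivalence relations on the Kan complexes $K_{*,q}$ and $K_{p,*}$, respectively.

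For reflexivity, I would take $y=x$ in condition i) of the definition: both $[x]_{\mathrm{h}}=[x]_{\mathrm{h}}$ and $[x]_{\mathrm{v}}=[x]_{\mathrm{v}}$ hold trivially. For symmetry, suppose $x\sim x'$ via condition i); invoking Lemma~\ref{p3} also gives a witness $z$ of condition ii), and this same $z$, regarded now in the role of $y$ for the ordered pair $(x',x)$, exhibits condition i) for $x'\sim x$, because $[x']_{\mathrm{h}}=[z]_{\mathrm{h}}$ and $[z]_{\mathrm{v}}=[x]_{\mathrm{v}}$.

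The main work, and where the extension condition really pays off, will be transitivity. Given $x\sim x'$ witnessed in form i) by $y$, and $x'\sim x''$ (also witnessed in form i)), the idea is first to apply Lemma~\ref{p3} to the second relation to rewrite it in form ii), obtaining $z$ with $[x']_{\mathrm{v}}=[z]_{\mathrm{v}}$ and $[z]_{\mathrm{h}}=[x'']_{\mathrm{h}}$. Transitivity of vertical homotopy in $K_{p,*}$ then yields $[y]_{\mathrm{v}}=[x']_{\mathrm{v}}=[z]_{\mathrm{v}}$, and combined with $[z]_{\mathrm{h}}=[x'']_{\mathrm{h}}$ this exhibits $y$ and $x''$ as satisfying condition ii) of Lemma~\ref{p3} with intermediate bisimplex $z$. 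A second application of Lemma~\ref{p3} produces $w$ with $[y]_{\mathrm{h}}=[w]_{\mathrm{h}}$ and $[w]_{\mathrm{v}}=[x'']_{\mathrm{v}}$; finally, transitivity of horizontal homotopy in $K_{*,q}$ gives $[x]_{\mathrm{h}}=[y]_{\mathrm{h}}=[w]_{\mathrm{h}}$, so $w$ witnesses $x\sim x''$ in form i).

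The hard part is exactly this double use of Lemma~\ref{p3}: it is what allows one to collapse the four-step zig-zag $x\stackrel{\mathrm{h}}{\sim}y\stackrel{\mathrm{v}}{\sim}x'\stackrel{\mathrm{h}}{\sim}y'\stackrel{\mathrm{v}}{\sim}x''$ produced by naive concatenation of witnesses back to the required two-step (horizontal-then-vertical) form. Without the extension condition one could not perform the `rotation' of a (vertical-then-horizontal) zig-zag into a (horizontal-then-vertical) one that is needed to bring consecutive horizontal homotopies, and consecutive vertical homotopies, into positions where they can be composed separately in the respective Kan complexes.
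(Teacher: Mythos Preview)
Your proof is correct and follows essentially the same approach as the paper's, relying on Lemma~\ref{p3} for both symmetry and transitivity. The only minor difference is that the paper's transitivity argument applies Lemma~\ref{p3} just once---observing directly that the two witnesses $y,y'$ of form~i) satisfy condition~ii) via the intermediate $x'$, and so produce a single $z$ with $[x]_{\mathrm{h}}=[y]_{\mathrm{h}}=[z]_{\mathrm{h}}$ and $[z]_{\mathrm{v}}=[y']_{\mathrm{v}}=[x'']_{\mathrm{v}}$---whereas you apply it twice; both routes are valid.
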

\begin{proof}
The relation is obviously reflexive, and it is symmetric thanks to
Lemma \ref{p3}. For transitivity, suppose $x,x',x'':\Delta[p,q]\to
K$ such that $x$ and $x'$ are bihomotopic as well as $x'$ and $x''$
are. Then, for some $y,y':\Delta[p,q]\to K$, we have
$[x]_{\mathrm{h}}=[y]_{\mathrm{h}}$,
$[y]_{\mathrm{v}}=[x']_{\mathrm{v}}$,
$[x']_{\mathrm{h}}=[y']_{\mathrm{h}}$, and
$[y']_{\mathrm{v}}=[x'']_{\mathrm{v}}$. Also, again by Lemma
\ref{p3}, there is $z:\Delta[p,q]\to K$ such that
$[y]_{\mathrm{h}}=[z]_{\mathrm{h}}$ and
$[z]_{\mathrm{v}}=[y']_{\mathrm{v}}$. It follows that
$[x]_{\mathrm{h}}=[z]_{\mathrm{h}}$ and
$[z]_{\mathrm{v}}=[x'']_{\mathrm{v}}$, whence $x$ and $x''$ are
bihomotopic.
\end{proof}

We will write $[[x]]$ for the bihomotopic class of a bisimplex $x:\Delta[p,q]\to K$.

\begin{lemma}\label{p5}
Let $K$ be any bisimplicial set satisfying the extension condition.
There are four well-defined mappings such that $[[x]]\mapsto
[xd^i_{\mathrm{h}}]_{\mathrm{v}}$,\,  $[[x]]\mapsto
[xd^j_{\mathrm{v}}]_{\mathrm{h}}$,\, $[x]_{\mathrm{h}}\mapsto
[[xs^j_{\mathrm{v}}]]$, and $[x]_{\mathrm{v}}\mapsto
[[xs^i_{\mathrm{h}}]]$ respectively, for any $x:\Delta[p,q]\to K$,
$0\leq i\leq p$ and $0\leq j\leq q$.
\end{lemma}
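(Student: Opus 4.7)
The plan is to verify, for each of the four assignments, that the image in the relevant quotient is independent of the representative. The whole argument will rest on two simple observations: the bisimplicial commutation identities $d^i_{\mathrm{h}}d^j_{\mathrm{v}} = d^j_{\mathrm{v}}d^i_{\mathrm{h}}$ and $s^i_{\mathrm{h}}s^j_{\mathrm{v}} = s^j_{\mathrm{v}}s^i_{\mathrm{h}}$ (together with their mixed versions), and the fact that a homotopy in a Kan complex always sits between simplices with identical faces.

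First, for the face-type assignment $[[x]] \mapsto [xd^i_{\mathrm{h}}]_{\mathrm{v}}$, I start from $x, x'$ bihomotopic and apply Lemma \ref{p3} to pick an intermediate $y\colon \Delta[p,q] \to K$ with $[x]_{\mathrm{h}} = [y]_{\mathrm{h}}$ and $[y]_{\mathrm{v}} = [x']_{\mathrm{v}}$. The first equality forces $xd^i_{\mathrm{h}} = yd^i_{\mathrm{h}}$ on the nose, since sharing horizontal faces is built into the definition of horizontal homotopy. From the second I take a vertical homotopy $\beta\colon \Delta[p,q+1] \to K$ from $y$ to $x'$ and show that $\beta d^i_{\mathrm{h}}$ is a vertical homotopy from $yd^i_{\mathrm{h}}$ to $x'd^i_{\mathrm{h}}$; the check is a line-by-line application of the bisimplicial identities to the defining face equations of $\beta$. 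Chaining these gives $[xd^i_{\mathrm{h}}]_{\mathrm{v}} = [x'd^i_{\mathrm{h}}]_{\mathrm{v}}$. The companion assignment $[[x]] \mapsto [xd^j_{\mathrm{v}}]_{\mathrm{h}}$ is handled by the same argument with horizontal and vertical roles swapped.

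Next, for the degeneracy-type assignment $[x]_{\mathrm{h}} \mapsto [[xs^j_{\mathrm{v}}]]$, I take a horizontal homotopy $\alpha\colon \Delta[p+1,q] \to K$ from $x$ to $x'$ and observe that $\alpha s^j_{\mathrm{v}}\colon \Delta[p+1,q+1] \to K$ is itself a horizontal homotopy from $xs^j_{\mathrm{v}}$ to $x's^j_{\mathrm{v}}$, again by pushing $s^j_{\mathrm{v}}$ past the horizontal faces of $\alpha$. This already produces $[xs^j_{\mathrm{v}}]_{\mathrm{h}} = [x's^j_{\mathrm{v}}]_{\mathrm{h}}$, and taking $y = x's^j_{\mathrm{v}}$ in the first clause of the definition of bihomotopy then yields $[[xs^j_{\mathrm{v}}]] = [[x's^j_{\mathrm{v}}]]$. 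The final assignment $[x]_{\mathrm{v}} \mapsto [[xs^i_{\mathrm{h}}]]$ is treated symmetrically.

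I do not anticipate any substantive obstacle; the content amounts to the commutativity of horizontal and vertical operators. The only thing to be careful about is pairing each kind of homotopy with the opposite kind of operator — horizontal homotopies are preserved under vertical faces and degeneracies, and vice versa — so that the resulting bisimplex has the right bidegree and restricts to the intended boundary data.
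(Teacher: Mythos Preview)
Your proposal is correct and follows essentially the same approach as the paper's proof: pick an intermediate $y$ witnessing the bihomotopy, use that a horizontal homotopy forces equality of horizontal faces while a vertical homotopy can be pushed through a horizontal face operator (and dually), and for the degeneracy assignments push a horizontal homotopy through $s^j_{\mathrm{v}}$ (and dually). The paper's argument is slightly terser but structurally identical.
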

\begin{proof}
Suppose that $[[x]]=[[x']]$. Then,
$[x]_{\mathrm{h}}=[y]_{\mathrm{h}}$ and
$[y]_{\mathrm{v}}=[x']_{\mathrm{v}}$, for some $y:\Delta[p,q]\to K$.
It follows that $xd^i_{\mathrm{h}}=yd^i_{\mathrm{h}}$ and there is a
vertical homotopy, say $z:\Delta[p,q+1]\to K$, from $y$ to $x'$. As
 $zd^i_{\mathrm{h}}:\Delta[p-1,q+1]\to K$ is then a
vertical homotopy from $yd^i_{\mathrm{h}}$ to $x'd^i_{\mathrm{h}}$,
we conclude that
$[xd^i_{\mathrm{h}}]_{\mathrm{v}}=[x'd^i_{\mathrm{h}}]_{\mathrm{v}}$.
The proof that
$[xd^i_{\mathrm{v}}]_{\mathrm{h}}=[x'd^i_{\mathrm{v}}]_{\mathrm{h}}$
is similar. For the third mapping, note that any horizontal homotopy
$y:\Delta[p+1,q]\to K$ from $x$ to $x'$ yields  the horizontal
homotopy $ys^j_{\mathrm{v}}:\Delta[p+1,q+1]\to K$ from
$xs^j_{\mathrm{v}}$ to $x's^j_{\mathrm{v}}$. Therefore,
$[xs^j_{\mathrm{v}}]_{\mathrm{h}}=[x's^j_{\mathrm{v}}]_{\mathrm{h}}$,
whence $[[xs^j_{\mathrm{v}}]]=[[x's^j_{\mathrm{v}}]]$, as required.
Similarly, we see that $[x]_{\mathrm{v}}=[x']_{\mathrm{v}}$ implies
$[[xs^i_{\mathrm{h}}]]=[[x's^i_{\mathrm{h}}]]$.
\end{proof}

We shall end this subsection by remarking that any bisimplicial set $K$, satisfying the extension condition, has
associated {\em horizontal fundamental groupoids} $\mathrm{P} K_{*,q}$, one for each integer $q\geq 0$, whose objects
are the bisimplices $x:\Delta[0,q]\to K$ and morphisms $[y]_{\mathrm{h}}:x'\to x$ horizontal homotopy classes of
bisimplices $y:\Delta[1,q]\to K$ with $yd^0_{\mathrm{h}}=x'$ and  $yd^1_{\mathrm{h}}=x$. The composition in these
groupoids $\mathrm{P} K_{*,q}$ is written  using the symbol $\circ_{\mathrm{h}}$, so  the composite of
$[y]_{\mathrm{h}}$ with $[y']_{\mathrm{h}}:x''\to x'$ is
$$[y]_{\mathrm{h}}\circ_{\mathrm{h}} [y']_{\mathrm{h}}=[\gamma d^1_{\mathrm{h}}]_{\mathrm{h}},$$ where
$\gamma:\Delta[2,q]\to K$ is a (any) bisimplex with $\gamma
d^2_{\mathrm{h}}=y$ and $\gamma d^0_{\mathrm{h}}=y'$. The identities
are denoted as $\mathrm{I}^{\mathrm{h}}x$, that is,
$\mathrm{I}^{\mathrm{h}}x=[xs^0_{\mathrm{h}}]_{\mathrm{h}}$.

And similarly, $K$ also has associated {\em vertical fundamental groupoids} $\mathrm{P} K_{p,*}$, $p\geq 0$, whose
morphisms $[z]_{\mathrm{v}}:zd^0_{\mathrm{v}}\to zd^1_{\mathrm{v}}$ are vertical homotopy classes of bisimplices
$z:\Delta[p,1]\to K$. For these, we  use the symbol $\circ_{\mathrm{v}}$ for denoting the composition and
$\mathrm{I}^{\mathrm{v}}$ for identities.

\vspace{0.2cm}
\subsection{Weak homotopy types: Some related constructions}
~

Let ${\bf Top}$ denote the category of spaces and continuous maps. A
map $X\to X'$ in ${\bf Top}$ is a {\em weak equivalence} if it
induces an isomorphism  $\pi_i(X,a)\cong\pi_i(X',fa)$ for all base
points $a$ of $X$ and $i\geq 0$. The {\em category of weak homotopy
types} is defined as the localization of the category of spaces
with respect to the class of weak equivalences \cite{qui,hovey}
and, for any given integer $n$, the {\em category of homotopy
$n$-types} is its full subcategory given by those spaces $X$ with
$\pi_i(X,a)=0$ for any integer $i\!>n$ and any base point $a$.

There are various constructions on (bi)simplicial sets that traditionally aid in the algebraic study of homotopy $n$-types. Below is a brief review of the constructions  used in this work.

Segal's {\em geometric realization} functor \cite{segal},  for
simplicial spaces $K\!:\Delta^{\!o}\to  \mathbf{Top}$,  is denoted
by $K\mapsto |K|$. Recall that it is defined as the left adjoint to
the functor that associates to a space $X$ the simplicial space $[n]\mapsto
X^{\Delta_n}$, where
$$\Delta_n=\{(t_0,\ldots,t_n)\!\in\!\mathbb{R}^{n\text{+}1}\,|\,\sum\!
t_i\!=\!1,\, 0\!\leq\! t_i\!\leq\! 1\}$$ denotes the affine simplex
having $[n]$ as its set of vertices and $X^{\Delta_n}$ is the the
function space of continuous maps from $\Delta_n$ to $X$, given the
compact-open topology. The underlying simplicial set is the {\em
singular complex} of $X$, denoted by $\s X$.

For instance, by regarding a set as a discrete space, the (Milnor's) geometric realization of a simplicial set $L:\Delta^{\!o}\to \mathbf{Set}$ is $|L|$, which is a CW-complex whose $n$-cells are in one-to-one correspondence with the $n$-simplices of $L$ which are nondegenerate. The following six facts are well-known:
\begin{facts}\label{f1}
\begin{enumerate}\item For any space $X$, $\s X$ is a Kan complex.

\item For any Kan complex $L$, there are natural isomorphisms $\pi_i(L,a)\cong \pi_i(|L|,|a|)$,  for all
base vertices $a:\Delta[0]\to L$ and $n\geq 0$.

\item A simplicial map between Kan complexes $L\to L'$ is a homotopy equivalence if and only if the induced map on realizations $|L|\to|L'|$ is a homotopy equivalence.
\item For any Kan complex $L$, the unit of the adjunction  $L\to S|L|$ is a homotopy equivalence.
 \item A continuous map $X\to Y$ is a weak homotopy equivalence  if and only if  the induced $\s X\to\s Y$ is a homotopy equivalence.
 \item For any space $X$, the counit $|\s X|\to X$ is a weak homotopy equivalence.
\end{enumerate}
\end{facts}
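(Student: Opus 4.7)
These six items are well-known classical facts from simplicial homotopy theory, and since the paper presents them as background rather than as novel statements to be proven in detail, my plan is simply to sketch the received proofs and to indicate their logical interdependencies. For (1), the key geometric observation is that for each $0\le k\le n$ the realized horn $|\Lambda^{\!k}[n]|$ is a strong deformation retract of $\Delta_n$ (retract radially from any interior point of the face opposite $k$). A simplicial map $\Lambda^{\!k}[n]\to \s X$ corresponds by adjunction to a continuous map $|\Lambda^{\!k}[n]|\to X$, and precomposing with the retraction $\Delta_n\to|\Lambda^{\!k}[n]|$ produces the required $n$-simplex filler.

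For (2), I would construct a natural bijection $\pi_i(L,a)\cong\pi_i(|L|,|a|)$ by sending a combinatorial representative $x:\Delta[n]\to L$ with all faces at $a$ to the realization of the induced pointed map $\Delta[n]/\partial\Delta[n]\to L$, which is a pointed map $S^n\to |L|$. Surjectivity on homotopy classes uses simplicial approximation with respect to the CW structure on $|L|$, and injectivity uses the analogous description of combinatorial and topological homotopies by $(n+1)$-simplices. That the combinatorial product, defined via a horn-filler $\Lambda^{\!n}[n+1]\to L$, matches the usual concatenation on $\pi_n(|L|,|a|)$ is a direct geometric verification inside the realization of $\Delta[n+1]$.

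Facts (3), (4), and (5) are then formal consequences of (2) together with (6). For (3), if $f:L\to L'$ is a $\pi_i$-isomorphism then so is $|f|$ by (2); as $|L|$ and $|L'|$ are CW-complexes, $|f|$ is a homotopy equivalence by the topological Whitehead theorem, and the simplicial Whitehead theorem for Kan complexes stated above transfers this back to $f$. For (4), the triangular identity gives a factorization $|L|\to|\s|L||\to|L|$ of the identity, and since the second map is a weak equivalence by (6), so is the first; therefore the unit $L\to \s|L|$ is a $\pi_i$-isomorphism by (2) and hence a homotopy equivalence by (3). For (5), in the naturality square associated to $f:X\to Y$ the vertical counits $|\s X|\to X$ and $|\s Y|\to Y$ are weak equivalences by (6), and two-out-of-three combined with (3) yields the stated equivalence between weak equivalence of $f$ and homotopy equivalence of $\s f$.

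The main obstacle is therefore (6), Milnor's classical theorem that $|\s X|\to X$ is a weak equivalence for every space $X$. The standard proof shows that every pointed map $\alpha:S^i\to X$ can, after a small homotopy, be factored through the realization of a finite simplicial approximation obtained by triangulating $S^i$ finely; this exhibits $\alpha$ up to homotopy as the image under the counit of an element of $\pi_i(|\s X|,|x|)$, giving surjectivity, while the analogous argument applied to $S^i\times [0,1]$ yields injectivity. This is the genuinely geometric content of the adjunction $|\cdot|\dashv\s$; the other five facts are ultimately formal consequences of it together with the Kan/Whitehead machinery.
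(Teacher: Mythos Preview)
Your reading is accurate: the paper does not prove these six statements at all. They are listed under the \texttt{facts} environment precisely because they are quoted as standard background from the references (chiefly Goerss--Jardine \cite{g-j} and May \cite{may}), with no argument supplied. So there is no ``paper's own proof'' to compare against; your sketches simply fill in what the paper deliberately omits.

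Your outlines are the standard ones and are essentially correct. A couple of minor remarks: in your derivation of (4) the appeal to (2) is unnecessary---once you know $|u_L|$ is a weak equivalence between CW-complexes it is a homotopy equivalence by topological Whitehead, and then (3) gives directly that $u_L$ is a simplicial homotopy equivalence. Also, your sketch of (2) via simplicial approximation is one classical route; the more common modern argument (and the one implicit in the cited texts) goes through the comparison of Kan's combinatorial homotopy groups with Milnor's realization theorem, or equivalently through the fact that $L\to \s|L|$ is a weak equivalence. But these are matters of exposition, not correctness.
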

When a bisimplicial set $K\!:\Delta^{\!o}\!\times\!\Delta^{\!o}\to \mathbf{Set}$ is
regarded as a simplicial object in the simplicial set category
 and one takes geometric realizations, then one obtains a simplicial space
 $\Delta^{\!o}\to {\bf Top}$, $[p]\mapsto |K_{p,*}|$, whose
Segal realization  is taken to be $|K|$, the geometric realization of
 $K$. As there are natural homeomorphisms \cite[Lemma in page 86]{qui2}
$$
|[p]\mapsto |K_{p,*}||\cong |\dia K|\cong |[q]\mapsto |K_{*,q}||,
$$ where $\dia K$ is the simplicial set obtained by composing $K$ with the diagonal functor $\Delta\to\Delta\times\Delta$, $[n]\mapsto ([n],[n])$, one usually takes $$|K|=|\dia K|.$$

Composing with the ordinal sum functor
$\mbox{or}:\Delta\times\Delta\to\Delta$, $([p],[q]) \mapsto
[p\!+\!1\!+\!q]$, gives  Illusie's {\em total $\dec$} functor,
$L\mapsto \mbox{Dec} L$, from simplicial to bisimplicial sets
\cite[VI, 1.5]{illusie}. More specifically, for any simplicial set
$L$, $\mbox{Dec} L$ is the bisimplicial set whose bisimplices of
bidegree $(p,q)$ are the $(p\!+\!1\!+\!q)$-simplices of $L$,
$x:\Delta[p\!+\!1\!+\!q]\to L$, and whose simplicial operators are
given by  $xd^i_{\mathrm{h}}=xd^i$, $xs^i_{\mathrm{h}}=xs^i$, for
$0\leq i\leq p$,  and $xd^j_{\mathrm{v}}=d^{p+1+j}$,
$xs^j_{\mathrm{v}}=xs^{p+1+j}$, for $0\leq j\leq q$. The functor
$\mbox{Dec}$ has a right adjoint \cite{dus}
\begin{equation}\label{d-w} \dec\dashv\w ,\end{equation}  often
called the {\em codiagonal} functor, whose description is as follows
\cite[III]{a-m}: for any bisimplicial set $K$, an $n$-simplex of $\w
K$ is a bisimplicial map
$$
\xymatrix@C=45pt{\bigsqcup\limits_{p=0}^{n}\Delta[p,n\!-\!p]\ar[r]^-{(x_{0},\dots,x_{n})}&K}
$$
such that $x_{\!p}d_{\mathrm{v}}^0= x_{p\text{\scriptsize
+}\hspace{-1pt}1}d^{p+1}_{\mathrm{h}}$, for $0\leq p< n$, whose
faces and degeneracies are given by
$$
\begin{array}{l}
  (x_{0},\dots,x_{n})d^i=(x_{0}d^i_{\mathrm{v}},\ldots,x_{i-1}d^1_{\mathrm{v}},x_{i+1}d^i_{\mathrm{h}},
  \ldots, x_{n}d^i_{\mathrm{h}}), \\ [0.5pc]
  (x_{0},\dots,x_{n})s^i= (x_{0}s^i_{\mathrm{v}},\ldots,x_{i}s^0_{\mathrm{v}},x_{i}s^i_{\mathrm{h}},
 \ldots, x_{n}s^i_{\mathrm{h}})\,.
\end{array}
$$
The unit and
the counit of the adjunction, $\mbox{u}:L\to \w\dec L$ and $\mbox{v}:\dec\w K\to
K$, are respectively defined by
\[\begin{array}{lll}\mbox{u}(y)=(ys^0,\dots,ys^n)& ~~~ &(y:\Delta[n]\to L)\\
\mbox{v}(x_0,\dots,x_{p+1+q})=x_{p+1}d^0_{\mathrm{h}}&&((x_0,\dots,x_{p+1+q})\!:\!\Delta[p,q]\to
\dec\w X)\,.\end{array}\]

The following facts are used in our development below:

\begin{facts}\label{f18}
\begin{enumerate}\item
For each $n\geq 0$, there is a natural Alexander-Whitney type diagonal approximation
$$\begin{array}{l}\phi:\dec\Delta[n]\to \Delta[n,n],\\
(\Delta[p\!+\!1\!+\!q]\overset{ x}\to\Delta[n])\ \mapsto \ (\Delta[p]\overset{x(d^{p+1})^q}\longrightarrow\Delta[n], \Delta[q]\overset{x(d^0)^{p+1}}\longrightarrow\Delta[n])\end{array}$$
such that, for any bisimplicial set $K$, the induced simplicial map
$ {\phi^*\!:\!\dia K\rightarrow \w K}$ determines a homotopy equivalence $$|\dia K|\simeq|\w K|$$ on the corresponding geometric realizations    \cite[Theorem 1.1]{c-r}.
\item For any simplicial map $f\!:\!L\to L'$, the induced $|f|\!:\!|L|\to |L'|$ is a homotopy equivalence if and only if the induced $|\dec f|\!:\!|\dec L|\to |\dec L'|$ is a homotopy equivalence \cite[Corollary 7.2]{c-r}.
\item For any simplicial set $L$ and any bisimplicial set $K$, both induced maps $|{\mathrm u}|\!:\!|L|\to |\w \dec L|$ and $|\mathrm{v}|\!:\!|\dec\w K|\to |K|$ are homotopy equivalences \cite[Proposition 7.1 and discussion below]{c-r}.

\item If $K$ is any bisimplicial set satisfying the extension condition, then $\w K$ is a Kan complex \cite[Proposition 2]{c-r2}.

\item If $L$ is a Kan complex, then $\dec L$ satisfies the extension condition {\em (the proof is a
straightforward application of \cite[Lemma 7.4]{may}) or \cite[Lemma
1]{c-r2})} .
\end{enumerate}

\end{facts}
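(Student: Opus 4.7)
The package of five assertions organizes itself around the adjunction $\dec \dashv \w$ and the Alexander-Whitney map $\phi$ of item (1). Once (1) is established, items (2) and (3) reduce to formal two-out-of-three manipulations, while (4) and (5) are direct combinatorial statements about horn filling. My plan is therefore to handle them in the order (5), (4), (1), (3), (2).

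For (5), unfolding the definition of the $\dec$ operators ($d^i_{\mathrm{h}}=d^i$ for $i\le p$ and $d^j_{\mathrm{v}}=d^{p+1+j}$) shows that a vertical extension problem $\Delta[p]\otimes \Lambda^l[q]\hookrightarrow\Delta[p,q]\to \dec L$ is exactly the horn $\Lambda^{p+1+l}[p+1+q]\to L$, which $L$ fills because it is Kan; the horizontal case is symmetric. A bi-horn $\Lambda^{k,l}[p,q]\to \dec L$ assembles into a $(p+1+q)$-simplex of $L$ missing two faces, namely the $k^{\text{th}}$ and the $(p+1+l)^{\text{th}}$; this is filled via the classical extra-face argument of \cite[Lemma 7.4]{may}, which first manufactures a degenerate auxiliary simplex supplying one of the two absent faces and then applies the Kan condition to the resulting genuine horn. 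Item (4) is then dual in spirit: given a horn $\Lambda^k[n]\to \w K$, adjointness rewrites the problem as an extension $\dec\Lambda^k[n]\hookrightarrow \dec\Delta[n]\to K$, and a filtration of the complement $\dec\Delta[n]\setminus \dec\Lambda^k[n]$ by non-degenerate bisimplices presents the problem as a finite sequence of vertical, horizontal, and bi-horn fillings in $K$, all of which are provided by hypothesis.

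For (1), one first checks directly that the formula $\phi(x)=(x\circ(d^{p+1})^q,\,x\circ(d^0)^{p+1})$ is compatible with the four $\dec$ operators, so that $\phi^*\colon \dia K\to \w K$ is a well-defined natural simplicial map. That $|\phi^*|$ is a homotopy equivalence is the main technical step; following \cite{c-r}, I would prove it by exhibiting an explicit simplicial homotopy inverse built from Eilenberg-Zilber-style shuffles and last-vertex retractions, verifying compatibility with both families of operators. With (1) in hand, items (3) and (2) follow: for (3), the classical extra-degeneracy argument applied to the bisimplicial objects $\dec\w K$ and $\w\dec L$ shows that $|v|$ and $|u|$ are equivalences once (1) is used to identify the diagonal and codiagonal realizations; for (2), combining (1) and (3) gives a natural chain $|\dec L|\simeq|\dia\dec L|\simeq|\w\dec L|\simeq |L|$, so $|\dec f|$ and $|f|$ are homotopy equivalences simultaneously. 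The principal obstacle is the homotopy equivalence in (1), because every other assertion in the package rests on this single input and because its proof requires the simultaneous management of horizontal and vertical operators on both sides of the map.
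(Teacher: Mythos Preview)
The paper does not prove Facts~\ref{f18} at all: each item is stated with an inline citation to \cite{c-r}, \cite{c-r2}, or \cite{may} and is used as a black box. There is thus no ``paper's own proof'' to compare your attempt against.

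That said, your outline is a reasonable reconstruction of how the cited references proceed, and for items (1), (2), and (5) it is essentially on target. Two places deserve tightening. For (3), the phrase ``extra-degeneracy argument applied to the bisimplicial objects $\dec\w K$ and $\w\dec L$'' is imprecise: $\w\dec L$ is a simplicial set, not bisimplicial, and the argument in \cite{c-r} for $|\mathrm{u}|$ instead factors through the equivalence $\phi^*$ of item (1) together with an explicit comparison of $L$ with $\dia\dec L$ via iterated degeneracies; the counit case is handled similarly. For (4), your adjunction rewriting is correct, but the claim that the complement $\dec\Delta[n]\setminus\dec\Lambda^k[n]$ can be filtered so that each attachment is a vertical, horizontal, or bi-horn is exactly the nontrivial content of \cite[Proposition~2]{c-r2}; the ordering of the bisimplices so that only admissible horns appear at each stage requires a careful induction and is not automatic. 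Your identification of (1) as the load-bearing step is accurate.
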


\section{Double groupoids satisfying the filling condition: Homotopy groups.}\label{h-g}

 A (small) double groupoid \cite{eres, eres2, b-s,k-s} is a groupoid object in the category of small groupoids. In general, we employ the standard nomenclature concerning double categories but, for the sake of clarity, we shall fix  some terminology and notations below.

  A (small) category can be described as a system  $(M,O,\mathrm{s},\mathrm{t},\mathrm{I},\circ)$, where $M$ is
  the set of morphisms, $O$ is the set of objects, $\mathrm{s},\mathrm{t}\!:\!M\to O$ are the source
  and target maps, respectively,  $\mathrm{I}\!:\!O\!\to\! M$ is the identities map, and
  $\circ\! :\!M {}_\mathrm{s}\! \!\times_\mathrm{t}\!M\to M$ is the composition map, subject to the usual associativity and identity axioms. Therefore,  a {\em double category} provides us with the following data: a set $O$ of {\em objects}, a set $H$ of {\em horizontal morphisms}, a set $V$ of {\em vertical morphisms}, and a set $C$ of {\em squares},
 together with four category structures, namely, the {\em category of
 horizontal morphisms}
 $(H,O,\mathrm{s}^\mathrm{h},\mathrm{t}^\mathrm{h},\mathrm{I}\mathrm{^h},\circ_\mathrm{h})$, the {\em category of vertical morphisms}
 $(V,O,\mathrm{s}^{\hspace{-1pt}\mathrm{v}},\mathrm{t}^{\hspace{-1pt}\mathrm{v}},\mathrm{I}\mathrm{^v},\circ_\mathrm{v})$, the {\em horizontal category of squares}
 $(C,V,\mathrm{s}^\mathrm{h},\mathrm{t}^\mathrm{h},\mathrm{I}\mathrm{^h},\circ_\mathrm{h})$,
 and the {\em vertical category of squares}
 $(C,H,\mathrm{s}^{\hspace{-1pt}\mathrm{v}},\mathrm{t}^{\hspace{-1pt}\mathrm{v}},\mathrm{I}\mathrm{^v},\circ_\mathrm{v})$. These are subject to the following three axioms:

 \begin{description}
 \item[Axiom 1] $\left\{\begin{array}{cl}\mathrm{(i)}& \mathrm{s}^{\mathrm{h}}\mathrm{s}^{\hspace{-1pt}\mathrm{v}}=
 \mathrm{s}^{\hspace{-1pt}\mathrm{v}}\mathrm{s}^{\mathrm{h}},\  \mathrm{t}^{\mathrm{h}}\mathrm{t}^{\hspace{-1pt}\mathrm{v}}=
 \mathrm{t}^{\hspace{-1pt}\mathrm{v}}\mathrm{t}^{\mathrm{h}},\ \mathrm{s}^{\mathrm{h}}\mathrm{t}^{\hspace{-1pt}\mathrm{v}}=
 \mathrm{t}^{\hspace{-1pt}\mathrm{v}}\mathrm{s}^{\mathrm{h}},\  \mathrm{s}^{\hspace{-1pt}\mathrm{v}}\mathrm{t}^{\mathrm{h}}=
 \mathrm{t}^{\mathrm{h}}\mathrm{s}^{\hspace{-1pt}\mathrm{v}},
 \\\mathrm{(ii)}&  \mathrm{s}^{\mathrm{h}}\mathrm{I}^{\hspace{-1pt}\mathrm{v}}=\mathrm{I}^{\hspace{-1pt}\mathrm{v}}
 \mathrm{s}^{\mathrm{h}},\ \mathrm{t}^{\mathrm{h}}\mathrm{I}^{\hspace{-1pt}\mathrm{v}}=
 \mathrm{I}^{\hspace{-1pt}\mathrm{v}}\mathrm{t}^{\mathrm{h}},\
 \mathrm{s}^{\hspace{-1pt}\mathrm{v}}\mathrm{I}^{\mathrm{h}}=\mathrm{I}^{\mathrm{h}}\mathrm{s}^{\hspace{-1pt}\mathrm{v}},\
 \mathrm{t}^{\hspace{-1pt}\mathrm{v}}\mathrm{I}^{\mathrm{h}}=\mathrm{I}^{\mathrm{h}}\mathrm{t}^{\hspace{-1pt}\mathrm{v}},
 \\ \mathrm{(iii)}& \mathrm{I}^{\mathrm{h}}\mathrm{I}^{\hspace{-1pt}\mathrm{v}}=
 \mathrm{I}^{\hspace{-1pt}\mathrm{v}}\mathrm{I}^{\mathrm{h}}.\end{array}\right.$
 \end{description}

 Equalities in {\bf Axiom 1} allow  a square $\alpha\in C$ to be depicted in the form
 \begin{equation}\label{alfa1}
 \xymatrix@C=-1pt@R=-1pt{d&&\ar[ll]_{g}b\\&\alpha&\\c\ar[uu]^{w}&&\ar[ll]^{f}a\ar[uu]_{u}}
 \end{equation}
 where $\mathrm{s}^{\mathrm{h}}\alpha=u,\
 \mathrm{t}^{\mathrm{h}}\alpha=w,\ \mathrm{s}^{\hspace{-1pt}\mathrm{v}}\alpha=f$
 and $\mathrm{t}^{\hspace{-1pt}\mathrm{v}}\alpha=g$, and the four vertices of the
 square representing $\alpha$ are
 $\mathrm{s}^{\mathrm{h}}\mathrm{s}^{\hspace{-1pt}\mathrm{v}}\alpha=a,\
 \mathrm{t}^{\mathrm{h}}\mathrm{t}^{\hspace{-1pt}\mathrm{v}}\alpha=d,\
 \mathrm{s}^{\mathrm{h}}\mathrm{t}^{\hspace{-1pt}\mathrm{v}}\alpha=b$ and
 $\mathrm{s}^{\hspace{-1pt}\mathrm{v}}\mathrm{t}^{\mathrm{h}}\alpha=c$. Moreover, if we represent identity morphisms by the
 symbol $\xymatrix@C=12pt{\ar@<-0.1ex>@{-}[r]\ar@<0.1ex>@{-}[r]&}$,
 then,   for any horizontal morphism  $f$, any vertical morphism $u$,
 and any object $a$,  the associated identity squares $\mathrm{I}^{\hspace{-1pt}\mathrm{v}}\!f$,
 $\mathrm{I}^{\mathrm{h}}u$ and
 $\mathrm{I}a\!:=\!\mathrm{I}^{\mathrm{h}}\mathrm{I}^{\hspace{-1pt}\mathrm{v}}a=\mathrm{I}^{\hspace{-1pt}
 \mathrm{v}}
 \mathrm{I}^{\mathrm{h}}a$
  are  respectively given in the form
 $$\xymatrix@C=4pt@R=3pt{\cdot&&\cdot\ar[ll]_f \\&&\\ \cdot\ar@<-0.1ex>@{-}[uu]\ar@<0.1ex>@{-}[uu]_
 {\ \textstyle \mathrm{I}^{\hspace{-1pt}\mathrm{v}}\!f}&&\cdot\ar[ll]^f\ar@<-0.1ex>@{-}[uu]
 \ar@<0.1ex>@{-}[uu]}\hspace{1cm}
 \xymatrix@C=4pt@R=3pt{\cdot&&\cdot\ar@<-0.1ex>@{-}[ll]\ar@<0.1ex>@{-}[ll]\\&&\\
 \cdot\ar[uu]^u_{\,\textstyle \mathrm{I}^{\hspace{-1pt}\mathrm{h}} u}&&\cdot
 \ar@<-0.1ex>@{-}[ll]\ar@<0.1ex>@{-}[ll]\ar[uu]_u }\hspace{1cm}
 \xymatrix@C=3pt@R=3pt{\cdot &&\cdot\ar@<-0.1ex>@{-}[ll]\ar@<0.1ex>@{-}[ll] \\&~&\\
  \cdot\ar@<-0.1ex>@{-}[uu]\ar@<0.1ex>@{-}[uu]_{\ \, \textstyle{\mathrm{I}a}}&&\cdot\ar@<-0.1ex>@{-}[ll]\ar@<0.1ex>@{-}[ll]
  \ar@<-0.1ex>@{-}[uu]\ar@<0.1ex>@{-}[uu]&}$$

 The equalities in {\bf Axiom 2} below show the squares are compatible with the boundaries, whereas   {\bf Axiom 3}
 establishes the necessary coherence between the two vertical and horizontal compositions of squares.

 \begin{description}
\item[Axiom 2]  $\left\{\begin{array}{cl}\mathrm{(i)}&\mathrm{s}^{\hspace{-1pt}\mathrm{v}}(\alpha\circ_{\mathrm{h}}\beta)=
\mathrm{s}^{\hspace{-1pt}\mathrm{v}}\alpha\circ_{\mathrm{h}}\mathrm{s}^{\hspace{-1pt}\mathrm{v}}\beta,\
\ \mathrm{t}^{\hspace{-1pt}\mathrm{v}}
 (\alpha\circ_{\mathrm{h}}\beta)=\mathrm{t}^{\hspace{-1pt}\mathrm{v}}\alpha\circ_{\mathrm{h}}\mathrm{t}^{\hspace{-1pt}\mathrm{v}}\beta,
 \\\mathrm{(ii)}&\mathrm{s}^{\mathrm{h}}(\alpha\circ_{\mathrm{v}}\beta)=\mathrm{s}^{\mathrm{h}}\alpha\circ_{\mathrm{v}}
 \mathrm{s}^{\mathrm{h}}\beta,\ \  \mathrm{t}^{\mathrm{h}}(\alpha\circ_{\mathrm{v}}\beta)=
 \mathrm{t}^{\mathrm{h}}\alpha\circ_{\mathrm{v}}\mathrm{t}^{\mathrm{h}}\beta,
 \\ \mathrm{(iii)}&  \mathrm{I}^{\hspace{-1pt}\mathrm{v}}(f\circ_{\mathrm{h}}f')=\mathrm{I}^{\hspace{-1pt}\mathrm{v}}\!f\circ_{\mathrm{h}}\mathrm{I}^{\hspace{-1pt}\mathrm{v}}\!f',
 \ \ \mathrm{I}^{\mathrm{h}}(u\circ_{\mathrm{v}} u')=
 \mathrm{I}^{\mathrm{h}}u\circ_{\mathrm{v}}\mathrm{I}^{\mathrm{h}}u'.\end{array}\right.$
 \item[Axiom 3]{\em In the situation
 $$\xymatrix@C=-1pt@R=-2pt{\cdot&&\ar[ll]\cdot&&\cdot\ar[ll]\\
 &\alpha& &\beta&\\ \ar[uu]\cdot&&\ar[ll]\cdot\ar[uu]&&\ar[uu]\cdot\ar[ll] \\
 &\gamma& &\delta& \\ \ar[uu]\cdot&&\ar[ll]\cdot\ar[uu]&&\ar[ll]\cdot\ar[uu]}$$
 the interchange law holds, that is,}
 $(\alpha \circ_{\mathrm{h}} \beta)\circ_{\mathrm{v}} (\gamma\circ_{\mathrm{h}}\delta) =(\alpha \circ_{\mathrm{v}} \gamma)\circ_{\mathrm{h}}
  (\beta\circ_{\mathrm{v}}\delta).$
 \end{description}

 A {\em double groupoid} is a double category such that all the four
 component categories are groupoids. We shall use the following
 notation for inverses in a double groupoid: $f^{\text{-}\!1_{\mathrm{h}}}$
 denotes the inverse of a horizontal morphism $f$, and
 $u^{\text{-}\!1_{\!\mathrm{v}}}$ denotes the inverse of a vertical morphism
 $u$. For any square $\alpha$ as in (\ref{alfa1}), the first one of
 $$\xymatrix@C=-3pt@R=-3pt{b&&\ar[ll]_{g^{\text{-}\!1_{\mathrm{h}}}}d\\&\alpha^{\text{-}\!1_{\mathrm{h}}}&\\a\ar[uu]^{u}&&\ar[ll]^{f^{\text{-}\!1_{\mathrm{h}}}}c\ar[uu]_{w},} \hspace{0.6cm}
 \xymatrix@C=-3pt@R=-3pt{c&&\ar[ll]_{f}a\\&\alpha^{\text{-}\!1_{\!\mathrm{v}}}&\\d\ar[uu]^{w^{\text{-}\!1_{\!\mathrm{v}}}}&&
 \ar[ll]^{g}b\ar[uu]_{u^{\text{-}\!1_{\!\mathrm{v}}}},}\hspace{0.6cm}
 \xymatrix@C=-3pt@R=-3pt{a&&\ar[ll]_{f^{\text{-}\!1_{\mathrm{h}}}}c\\&\alpha^{\text{-}\!1}
 &\\b\ar[uu]^{u^{\text{-}\!1_{\!\mathrm{v}}}}&&\ar[ll]^{g^{\text{-}\!1_{\mathrm{h}}}}d\ar[uu]_{w^{\text{-}\!1_{\!\mathrm{v}}}},}
 $$
 is the inverse of $\alpha$ in the horizontal groupoid of squares,
 the second one denotes the inverse of $\alpha$ in the vertical
 groupoid of squares, and the third one is the square
 $(\alpha^{\text{-}1_{\mathrm{h}}})^{\text{-}1_{\!\mathrm{v}}}=(\alpha^{\text{-}1_{\!\mathrm{v}}})^{\text{-}1_{\mathrm{h}}}$,
 which is denoted simply by $\alpha^{\text{-}1}$.

 \vspace{0.2cm}
 The double groupoids we are interested in satisfy the condition below.

 \begin{description}
 \item[Filling condition]
  {\em Any filling problem $$\xymatrix@C=-1pt@R=-1pt{\cdot && \cdot\ar[ll]_g\\ &\scriptstyle{\exists ?}&\\ \cdot \ar@{.>}[uu] & &  \cdot \ar@{.>}[ll]\ar[uu]_{u}}$$
  has a solution; that is, for any horizontal morphism $g$ and any vertical morphism $u$ such that $\mathrm{s}^{\mathrm{h}}g=\mathrm{t}^{\hspace{-1pt}\mathrm{v}}u$, there is a square
  $\alpha$ with $\mathrm{s}^{\mathrm{h}}\alpha=u$ and $\mathrm{t}^{\hspace{-1pt}\mathrm{v}}\alpha=g$.}
 \end{description}

 As we recalled in the introduction, this filling condition on double groupoids is
  often satisfied for those double groupoids arising in algebraic topology.
 Further below, in Sections \ref{dgt} and \ref{lad}, we post two new homotopical double groupoid
 constructions that relevant to our  deliberations: one, $\dpi X$, for topological spaces $X$, and the other, $\adj K$, for bisimplicial
 sets $K$, both yielding double groupoids satisfying the filling condition.

 The remainder of this section is devoted to defining {\em homotopy groups}, $\pi_i(\mathcal{G},a)$, for double groupoids $\mathcal{G}$ satisfying the filling condition. The useful observation below is
 a direct consequence of  \cite[Lemma 1.12]{a-n-2}.

 \begin{lemma}\label{fc}
 A double groupoid $\mathcal{G}$ satisfies the filling condition if and only if any filling
 problem  such as the one below has a solution.
 $$\xymatrix@C=-1pt@R=-1pt{\cdot && \cdot\ar@{.>}[ll]\\ &\scriptstyle{\exists ?}&\\ \cdot \ar[uu]^w & &  \cdot \ar[ll]^f\ar@{.>}[uu]_{\ \textstyle{,}}}\hspace{0.4cm} \xymatrix@C=-1pt@R=-1pt{\cdot && \cdot\ar@{.>}[ll]\\ &\scriptstyle{\exists ?}&\\ \cdot \ar@{.>}[uu] & &  \cdot \ar[ll]^f\ar[uu]_{u\ \textstyle{,}}}\hspace{0.4cm} \xymatrix@C=-1pt@R=-1pt{\cdot && \cdot\ar[ll]_g\\ &\scriptstyle{\exists ?}&\\ \cdot \ar[uu]^w & &  \cdot \ar@{.>}[ll]\ar@{.>}[uu]_{\ \textstyle{,}}}$$
  \end{lemma}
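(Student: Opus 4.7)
The plan is to show that each of the three displayed filling problems is equivalent to the standard filling condition by exploiting the horizontal inverse $\alpha^{\text{-}1_\mathrm{h}}$, the vertical inverse $\alpha^{\text{-}1_\mathrm{v}}$, and the double inverse $\alpha^{\text{-}1}$ of squares, whose boundary-edge descriptions are given in the three diagrams just preceding the statement. Concretely, $\alpha^{\text{-}1_\mathrm{h}}$ horizontally inverts the top and bottom edges and swaps the two vertical sides; $\alpha^{\text{-}1_\mathrm{v}}$ swaps top with bottom and vertically inverts the two vertical sides; and $\alpha^{\text{-}1}$ does both. In a double groupoid these inverses always exist, so they let us translate any corner-filling problem into the top-right corner-filling problem of the standard filling condition.

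For the forward implication I would treat the three cases in turn. For the third diagram (given top $g$ and left $w$ meeting at $d$), the pair $(g^{\text{-}1_\mathrm{h}}, w)$ is compatible for the standard filling condition since $\mathrm{s}^{\mathrm{h}}g^{\text{-}1_\mathrm{h}} = d = \mathrm{t}^{\mathrm{v}}w$; choosing a filler $\beta$, its horizontal inverse $\beta^{\text{-}1_\mathrm{h}}$ has top $g$ and left $w$ and thereby solves the problem. The second diagram is handled symmetrically by applying the standard filling condition to $(f, u^{\text{-}1_\mathrm{v}})$ (compatible since $\mathrm{s}^{\mathrm{h}}f = a = \mathrm{t}^{\mathrm{v}}u^{\text{-}1_\mathrm{v}}$) and taking the vertical inverse of the resulting square. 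For the first diagram both inversions are needed: $(f^{\text{-}1_\mathrm{h}}, w^{\text{-}1_\mathrm{v}})$ is compatible because $\mathrm{s}^{\mathrm{h}}f^{\text{-}1_\mathrm{h}} = c = \mathrm{t}^{\mathrm{v}}w^{\text{-}1_\mathrm{v}}$, and the double inverse $\beta^{\text{-}1}$ of any filler has bottom $f$ and left $w$.

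The converse is the same trick run in reverse: each of the three inverses is an involution, so any one of the three alternative conditions produces the standard filling condition by the mirror argument. For instance, to recover the standard filling condition from the third diagram, given $g$ (top) and $u$ (right) with $\mathrm{s}^{\mathrm{h}}g = b = \mathrm{t}^{\mathrm{v}}u$, one applies the third-diagram filling to $(g^{\text{-}1_\mathrm{h}}, u)$ (compatible since $\mathrm{t}^{\mathrm{h}}g^{\text{-}1_\mathrm{h}} = b = \mathrm{t}^{\mathrm{v}}u$) and takes the horizontal inverse of the resulting square. The only delicate point in the whole argument is bookkeeping: tracking which edge ends up at which corner after each of the three involutions. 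Since this is read off mechanically from the three inverse-square pictures already displayed, I anticipate no real obstacle.
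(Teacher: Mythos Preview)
Your proposal is correct. The paper itself does not actually prove this lemma: it simply states that the result ``is a direct consequence of \cite[Lemma 1.12]{a-n-2}'' and moves on. Your argument, by contrast, is self-contained: you reduce each of the three alternative corner-fillings to the standard top-right one (and back) via the involutions $\alpha\mapsto\alpha^{\text{-}1_\mathrm{h}}$, $\alpha\mapsto\alpha^{\text{-}1_\mathrm{v}}$, $\alpha\mapsto\alpha^{\text{-}1}$, whose effect on boundary edges is exactly what the three displayed inverse-square pictures encode. The boundary bookkeeping you flag as ``the only delicate point'' checks out in each case (e.g.\ for the third diagram, $\mathrm{s}^{\mathrm{h}}g^{\text{-}1_\mathrm{h}}=\mathrm{t}^{\mathrm{h}}g=d=\mathrm{t}^{\mathrm{v}}w$, and $\beta^{\text{-}1_\mathrm{h}}$ indeed has top $g$ and left $w$). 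So where the paper outsources the argument, you supply it directly; the content is the same, but yours is the more informative write-up.
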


Hereafter, we assume $\mathcal{G}$ is a double groupoid satisfying the filling condition.

\vspace{0.2cm}
\subsection{The pointed sets $\pi_0(\mathcal{G},a)$.}\label{s21}~

We state that two objects $a,b$ of $\mathcal{G}$ are {\em connected} whenever there is a pair of
morphisms $(g,u)$ in $\mathcal{G}$ of the form $$\xymatrix@R=8pt@C=10pt{b &\cdot \ar[l]_g \\ & a \ar[u]_{u\ \textstyle{,}} }$$ that is, where $g$ is a horizontal morphism and $u$ a vertical morphism such that $\mathrm{s}^{\mathrm{h}}g=\mathrm{t}^{\mathrm{v}}u$,  $\mathrm{t}^{\mathrm{h}}g=b$, and $\mathrm{s}^{\mathrm{v}}u=a$. Because of the filling condition, this is
equivalent to saying that there is a square in $\mathcal{G}$ of the form
$$\xymatrix@C=-1pt@R=-1pt{b && \cdot\ar[ll]_g\\ &\alpha&\\ \cdot \ar[uu]^w & & a \ar[ll]^f\ar[uu]_{u\ \textstyle{,}}}$$
and it is also equivalent to saying that there is a pair of matching morphisms $(w,f)$ as $$\xymatrix@R=10pt@C=10pt{b & \\ \cdot \ar[u]^-w & a. \ar[l]_-f }$$

If $a$ and $b$ are recognized as being connected by means of the pair of morphisms $(g,u)$ as above, then the pair
$(u^{\text{-}1_\mathrm{v}},g^{\text{-}1_\mathrm{h}})$ shows that $b$ is connected to $a$. Hence,  being connected is a
symmetric relation on the set of objects of $\mathcal{G}$. This relation is clearly reflexive thanks to the identity
morphisms $(\mathrm{I}\mathrm{^h}a,\mathrm{I}\mathrm{^v}a)$, and it is also transitive. Suppose $a$ is connected with $b$,
which itself is connected with another object $c$. Then, we have morphisms $u, f, v, g$ as in the diagram
$$
\xymatrix@C=12pt@R=12pt{ c&\ar[l]_g\cdot&\ar@{.>}[l]_{g'}\cdot\\&b\ar[u]^{v}\ar@{}[ur]|-{\textstyle \beta}&\cdot\ar[l]^f\ar@{.>}[u]_{u'}\\&&a\ar[u]_u}
$$
where $\beta$ is  any square with $\mathrm{t}^{\mathrm{h}}\beta=v$ and $\mathrm{s}^{\mathrm{v}}\beta=f$, and the dotted $g'$ and $u'$ are the other sides of $\beta$. Consequently, on considering the pair of composites $(g\circ_\mathrm{h}g',u'\circ_\mathrm{v}u)$, we see that $a$
and $c$ are connected.

Therefore,  being  connected establishes an equivalence relation on  the objects of the double groupoid and,
associated to $\mathcal{G}$, we take

\begin{equation}  \mbox{$\pi_0\mathcal{G}=$ {\em the set of connected classes of objects of }
$\mathcal{G}$,}\end{equation} and  we write $\pi_0(\mathcal{G},a)$
for the set $\pi_0\mathcal{G}$ pointed with the class $[a]$ of an
object $a$ of $\mathcal{G}$.

\vspace{0.2cm}
\subsection{The groups $\pi_1(\mathcal{G},a)$}~

Let $a$ be any given object of $\mathcal{G}$, and let
\begin{equation}
\mathcal{G}(a)=\left \{\parbox{57pt}{\xymatrix@R=10pt@C=10pt{a &x \ar[l]_g \\ & a  \ar[u]_{u}}}\right\}
\end{equation}
be the set of all pairs of morphisms $(g,u)$, where $g$ is a horizontal morphism and $u$ a vertical morphism in
$\mathcal{G}$ such that  $\mathrm{t}^{\mathrm{h}}g=a=\mathrm{s}^{\mathrm{v}}u$ and $\mathrm{s}^{\mathrm{h}}g=
\mathrm{t}^{\mathrm{v}}u$.

Define a relation $\sim$ on $\mathcal{G}(a)$ by the rule $(g,u)\sim (g',u')$ if and only if there are two squares
$\alpha$ and $\alpha' $ in $ \mathcal{G}$ of the form $$\xymatrix@C=10pt@R=10pt
        {           a  & \cdot\ar[l]_g      \\
                \cdot\ar[u]^w \ar@{}[ru]|{\textstyle \alpha}& a\ar[l]^f\ar[u]_u }\hspace{0.5cm}
                \xymatrix@C=10pt@R=10pt
        {           a  & \cdot\ar[l]_{g'}      \\
                \cdot\ar[u]^w \ar@{}[ru]|(0.55){\textstyle \alpha'}& a\ar[l]^f\ar[u]_{u'} }$$
that is, such that $\mathrm{t}^{\mathrm{h}}\alpha=\mathrm{t}^{\mathrm{h}}\alpha',\
\mathrm{s}^{\mathrm{v}}\alpha=\mathrm{s}^{\mathrm{v}}\alpha',\ \mathrm{s}^{\mathrm{h}}\alpha=u,\
\mathrm{s}^{\mathrm{h}}\alpha'=u',\ \mathrm{t}^{\mathrm{v}}\alpha=g$, and $\mathrm{t}^{\mathrm{v}}\alpha'=g'$.

\begin{lemma}
The relation $\sim$ is an equivalence.
\end{lemma}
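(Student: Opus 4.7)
The plan is to verify reflexivity, symmetry, and transitivity in turn; the first two are essentially immediate, and transitivity is the substantive step where the groupoid structure on squares does the work.

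For reflexivity of a given pair $(g,u)\in\mathcal{G}(a)$, the filling condition supplies a square $\alpha$ with $\mathrm{s}^{\mathrm{h}}\alpha=u$ and $\mathrm{t}^{\mathrm{v}}\alpha=g$; setting $w:=\mathrm{t}^{\mathrm{h}}\alpha$ and $f:=\mathrm{s}^{\mathrm{v}}\alpha$, the choice $\alpha'=\alpha$ shows $(g,u)\sim(g,u)$. Symmetry is immediate since the definition of $\sim$ is symmetric in the pair of squares.

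The heart of the matter is transitivity. Suppose $(g,u)\sim(g',u')$ is witnessed by $\alpha,\alpha'$ sharing boundary $(w,f)$, and $(g',u')\sim(g'',u'')$ is witnessed by $\beta,\beta'$ sharing boundary $(w',f')$. The idea is to concatenate the witnesses via horizontal composition and horizontal inverses so as to produce a new pair of squares with common boundary $(w,f)$. Explicitly, form
\[
\gamma \,:=\, \alpha^{\text{-}1_{\mathrm{h}}} \circ_{\mathrm{h}} \alpha' \circ_{\mathrm{h}} \beta^{\text{-}1_{\mathrm{h}}} \circ_{\mathrm{h}} \beta',
\]
which is composable at each juncture because $\mathrm{s}^{\mathrm{h}}\alpha^{\text{-}1_{\mathrm{h}}}=w=\mathrm{t}^{\mathrm{h}}\alpha'$, $\mathrm{s}^{\mathrm{h}}\alpha'=u'=\mathrm{t}^{\mathrm{h}}\beta^{\text{-}1_{\mathrm{h}}}$, and $\mathrm{s}^{\mathrm{h}}\beta^{\text{-}1_{\mathrm{h}}}=w'=\mathrm{t}^{\mathrm{h}}\beta'$. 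Computing the four sides via Axiom 2 and invoking groupoid cancellation in the category of horizontal morphisms, one finds that $\gamma$ has left $u$, right $u''$, top $g^{\text{-}1_{\mathrm{h}}}\circ_{\mathrm{h}}g''$, and bottom $\mathrm{I}^{\mathrm{h}}a$. Since $\mathrm{s}^{\mathrm{h}}\alpha=u=\mathrm{t}^{\mathrm{h}}\gamma$, the composite $\alpha\circ_{\mathrm{h}}\gamma$ is defined; the same kind of computation shows it has left $w$, right $u''$, top $g''$, and bottom $f$. Together with $\alpha$ itself, this pair witnesses $(g,u)\sim(g'',u'')$ with the required common boundary.

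The principal obstacle is purely bookkeeping: one must track all four boundary components carefully through each horizontal composition and inversion to confirm that every composite is well-defined and that the top and bottom of $\gamma$ telescope correctly. The cancellations $g'\circ_{\mathrm{h}}g'{}^{\text{-}1_{\mathrm{h}}}=\mathrm{I}^{\mathrm{h}}a$ and $f^{\text{-}1_{\mathrm{h}}}\circ_{\mathrm{h}}f=\mathrm{I}^{\mathrm{h}}a$ (and the analogous ones with $f'$) leave exactly the promised boundaries, after which the final composition with $\alpha$ delivers the witnessing square mechanically.
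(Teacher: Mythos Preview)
Your proof is correct and follows essentially the same strategy as the paper: both arguments establish transitivity by forming a horizontal composite of witness squares with one horizontal inverse inserted, then pairing it with one of the original witnesses. The paper's composite $\beta\circ_{\mathrm{h}}\alpha'^{\text{-}1_{\mathrm{h}}}\circ_{\mathrm{h}}\alpha$ (paired with $\beta'$, common boundary $(w',f')$) is slightly more economical, whereas your $\alpha\circ_{\mathrm{h}}\gamma$ simplifies, after the harmless cancellation $\alpha\circ_{\mathrm{h}}\alpha^{\text{-}1_{\mathrm{h}}}=\mathrm{I}^{\mathrm{h}}$, to the mirror-image triple $\alpha'\circ_{\mathrm{h}}\beta^{\text{-}1_{\mathrm{h}}}\circ_{\mathrm{h}}\beta'$ (paired with $\alpha$, common boundary $(w,f)$).
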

\begin{proof}
Since $\mathcal{G}$ satisfies the filling condition, the relation is clearly reflexive, and it is obviously  symmetric.
To prove transitivity, suppose $(g,u)\sim(g',u')\sim(g'',u'')$, so that there are squares $\alpha,\alpha',\beta$ and
$\beta'$ as below.
$$\xymatrix@C=10pt@R=10pt
        {           a  & \cdot\ar[l]_g      \\
                \cdot\ar[u]^w \ar@{}[ru]|{\textstyle \alpha}& a\ar[l]^f\ar[u]_u }\hspace{0.5cm}
                \xymatrix@C=10pt@R=10pt
        {           a  & \cdot\ar[l]_{g'}      \\
                \cdot\ar[u]^w \ar@{}[ru]|(0.55){\textstyle \alpha'}& a\ar[l]^f\ar[u]_{u'} }\hspace{0.5cm}
\xymatrix@C=10pt@R=10pt
        {           a  & \cdot\ar[l]_{g'}      \\
                \cdot\ar[u]^{w'} \ar@{}[ru]|(0.55){\textstyle \beta}& a\ar[l]^{f'}\ar[u]_{u'} }\hspace{0.5cm}
                \xymatrix@C=10pt@R=10pt
        {           a  & \cdot\ar[l]_{g''}      \\
                \cdot\ar[u]^{w'} \ar@{}[ru]|(0.55){\textstyle \beta'}& a\ar[l]^{f'}\ar[u]_{u''} }$$
Then, we have the horizontally composable squares
$$
\xymatrix@C=20pt@R=14pt{a&\ar[l]_{g'}\cdot&a\ar[l]_-{{g'}^{\text{-}1_\mathrm{h}}}&\cdot\ar[l]_{g}\\
\cdot\ar[u]^{w'}\ar@{}[ru]|{\textstyle \beta}&a\ar[u] \ar[l]^{f'}\ar@{}[ru]|(0.55){\textstyle
{\alpha'}^{\text{-}1_\mathrm{h}}}&\cdot\ar[l]^{f^{\text{-}1_\mathrm{h}}}\ar[u]\ar@{}[ur]|{\textstyle
\alpha}&a\ar[l]^f\ar[u]_{u} }
$$
whose composition $\beta\!\circ_\mathrm{h}\!\alpha'^{\text{-}1_{\mathrm{h}}}\!\circ_\mathrm{h}\!\alpha$ and $\beta'$
show that $(g,u)\sim(g'',u'')$.
\end{proof}

We write $[g,u]$ for the $\sim$-equivalence class of $(g,u)\in\mathcal{G}(a)$. Now we define a product on
\begin{equation}
\pi_1(\mathcal{G},a)\!:=\mathcal{G}(a)\diagup\!\! \sim
\end{equation}
as follows: given $[g_1,u_1],\ [g_2,u_2]\in \pi_1(\mathcal{G},a)$, by the filling condition on $\mathcal{G}$, we can
choose a square $\gamma$ with $\mathrm{s}^\mathrm{v}\gamma=g_2$ and $\mathrm{t}^\mathrm{h}\gamma=u_1$ so that we have
a configuration in $\mathcal{G}$ of the form
$$
\xymatrix@C=12pt@R=12pt{ a&\ar[l]_{g_1}\cdot&\ar[l]_{g}\cdot\\&a\ar[u]\ar@{}@<-2pt>[u]^-{u_1}\ar@{}[ur]|{\textstyle
\gamma}&\cdot\ar[l]^{g_2}\ar[u]_{u}\\&&a\ar[u]_{u_2}}
$$
where $g=\mathrm{t}^\mathrm{v}\gamma$ and $u=\mathrm{s}^\mathrm{h}\gamma$. Then we define
\begin{equation}
[g_1,u_1]\circ [g_2,u_2]=[g_1\circ_{\mathrm{h}}g,u\circ_{\mathrm{v}}u_2]
\end{equation}
\begin{lemma}
The product is well defined.
\end{lemma}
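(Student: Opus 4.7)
The lemma asserts two separate forms of independence in the definition
$$[g_1,u_1]\circ[g_2,u_2]\,=\,[g_1\circ_{\mathrm{h}}g,\ u\circ_{\mathrm{v}}u_2]:$$
independence from the choice of filler $\gamma$, and independence from the choice of representatives of the two classes being multiplied. For each verification the plan is the same: use the filling condition—in all four of its equivalent orientations provided by Lemma \ref{fc}—to produce auxiliary squares of prescribed boundary, and then paste them via the interchange law (Axiom 3) into the pair of witness squares $\alpha,\alpha'$ required by the definition of $\sim$.

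For filler-independence, let $\gamma$ and $\gamma'$ be two fillers for the same pair $(u_1,g_2)$, producing $(g,u)$ and $(g',u')$ respectively. I would construct two squares, one with vertical target $g_1\circ_{\mathrm{h}}g$ and horizontal source $u\circ_{\mathrm{v}}u_2$, the other with vertical target $g_1\circ_{\mathrm{h}}g'$ and horizontal source $u'\circ_{\mathrm{v}}u_2$, arranged so that they share their remaining two edges. The idea is to place $\gamma$ (respectively $\gamma'$) as the central block in a $2\times 2$ array whose three remaining cells are supplied by the filling condition with prescribed boundaries chosen to force the outer edges of the two arrays to coincide; the interchange law then collapses each array into a single square of the desired form.

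For representative-independence, suppose $(g_1,u_1)\sim(g_1',u_1')$ and $(g_2,u_2)\sim(g_2',u_2')$ via squares $\alpha_1,\alpha_1',\alpha_2,\alpha_2'$, and choose fillers $\gamma,\gamma'$ for the two candidate products. Place these six squares in a rectangular diagram in which $\gamma$ and $\gamma'$ occupy middle positions and the equivalence squares occupy the outer positions, closing any remaining gaps by Lemma \ref{fc}. Two successive applications of the interchange law then collapse the two sides into a pair of squares with coincident outer top and bottom edges, yielding the required witnesses $\alpha,\alpha'$. To keep the bookkeeping manageable I would first treat the subcase where only $(g_1,u_1)$ is replaced, then its symmetric counterpart, and recover the general statement by transitivity of $\sim$.

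The main obstacle is exactly this combinatorial bookkeeping: every intermediate square must have its four edges matching those of its neighbours, and the identity squares $\mathrm{I}a,\mathrm{I}^{\mathrm{h}}u,\mathrm{I}^{\mathrm{v}}\!f$ together with the horizontal, vertical, and diagonal inverses of the equivalence squares must be inserted in just the right positions. The two-subcase reduction should make each invocation of the filling condition transparent and ensure that the final pair of witness squares really do share the top and bottom edges demanded by $\sim$.
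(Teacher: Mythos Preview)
Your strategy—filling condition plus interchange law assembled into $2\times 2$ witness arrays—is the right one and would succeed if carried through. The difference from the paper is one of economy: you propose three separate verifications (filler-independence, then two representative-independence subcases glued by transitivity), whereas the paper dispatches everything in a single step.

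The paper treats the fully general situation at once: given witness pairs $(\alpha,\alpha')$ for $(g_1,u_1)\sim(g_1',u_1')$ and $(\beta,\beta')$ for $(g_2,u_2)\sim(g_2',u_2')$, and arbitrary fillers $\gamma,\gamma'$ for the two products, it needs only \emph{one} auxiliary square $\theta$, chosen with $\mathrm{t}^{\mathrm{v}}\theta=f_1$ and $\mathrm{s}^{\mathrm{h}}\theta=w_2$ (here $f_1=\mathrm{s}^{\mathrm{v}}\alpha=\mathrm{s}^{\mathrm{v}}\alpha'$ and $w_2=\mathrm{t}^{\mathrm{h}}\beta=\mathrm{t}^{\mathrm{h}}\beta'$). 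The two composites
\[
(\alpha\circ_{\mathrm{h}}\gamma)\circ_{\mathrm{v}}(\theta\circ_{\mathrm{h}}\beta)
\qquad\text{and}\qquad
(\alpha'\circ_{\mathrm{h}}\gamma')\circ_{\mathrm{v}}(\theta\circ_{\mathrm{h}}\beta')
\]
then automatically share their left edge $w_1\circ_{\mathrm{v}}\mathrm{t}^{\mathrm{h}}\theta$ and bottom edge $\mathrm{s}^{\mathrm{v}}\theta\circ_{\mathrm{h}}f_2$, because the \emph{same} $\theta$ sits in both arrays and the witness pairs share their outer edges by definition of $\sim$. These composites are precisely the required witnesses. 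Filler-independence is the degenerate case where primed and unprimed data agree but $\gamma\neq\gamma'$, so no separate argument is needed; and no identity squares or inverses enter at all.

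What your decomposition buys is conceptual separation of concerns; what the paper's buys is that the bookkeeping you flag as the main obstacle essentially disappears—one filler, one pasting, done.
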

\begin{proof}
Let $[g_1,u_1]=[g_1',u_1'],\ [g_2,u_2]=[g_2',u_2']$ be elements of $\pi_1(\mathcal{G},a)$. Then, there are squares
$$\xymatrix@C=10pt@R=10pt
        {           a  & \cdot\ar[l]_{g_1}      \\
                \cdot\ar[u]^{w_1} \ar@{}[ru]|{\textstyle \alpha}& a\ar[l]^{f_1}\ar[u]_{u_1} }\hspace{0.5cm}
                \xymatrix@C=10pt@R=10pt
        {           a  & \cdot\ar[l]_{g'_1}      \\
                \cdot\ar[u]^{w_1} \ar@{}[ru]|(0.55){\textstyle \alpha'}& a\ar[l]^{f_1}\ar[u]_{u'_1} }\hspace{0.5cm}
\xymatrix@C=10pt@R=10pt
        {           a  & \cdot\ar[l]_{g_2}      \\
                \cdot\ar[u]^{w_2} \ar@{}[ru]|(0.55){\textstyle \beta}& a\ar[l]^{f_2}\ar[u]_{u_2} }\hspace{0.5cm}
                \xymatrix@C=10pt@R=10pt
        {           a  & \cdot\ar[l]_{g'_2}      \\
                \cdot\ar[u]^{w_2} \ar@{}[ru]|(0.55){\textstyle \beta'}& a\ar[l]^{f_2}\ar[u]_{u'_2} }$$
and choosing squares $\gamma$ and $\gamma'$ as in
$$\xymatrix@C=12pt@R=12pt
        {           \cdot  & \cdot\ar[l]_g      \\
                a\ar[u]^{u_1} \ar@{}[ru]|{\textstyle \gamma}& \cdot\ar[l]^{g_2}\ar[u]_u }\hspace{0.5cm}
                \xymatrix@C=12pt@R=12pt
        {           \cdot & \cdot\ar[l]_{g'}      \\
                a\ar[u]^{u'_1} \ar@{}[ru]|(0.55){\textstyle \gamma'}& \cdot\ar[l]^{g'_2}\ar[u]_{u'} }$$
we have $[g_1,u_1]\circ [g_2,u_2]=[g_1\circ_\mathrm{h}g,u\circ_\mathrm{v}u_2]$ and
$[g_1',u_1']\circ[g_2',u_2']=[g_1'\circ_\mathrm{h}g',u'\circ_\mathrm{v}u_2']$. Now, letting  $\theta$ be any square
with $\mathrm{t}^\mathrm{v}\theta=f_1$ and $\mathrm{s}^\mathrm{h}\theta=w_2$,
 we have squares as in
$$\xymatrix@C=-1pt@R=-1pt{a&&\ar[ll]_{g_1}\cdot&&\cdot\ar[ll]_g\\
&\alpha& &\gamma&\\
\ar[uu]^{w_1}\cdot&&\ar[ll]a\ar[uu]&&\ar[uu]_u\cdot\ar[ll] \\
&\theta& &\beta& \\
\ar[uu]\cdot&&\ar[ll]\cdot\ar[uu]&&\ar[ll]^{f_2}a\ar[uu]_{u_2}&&}\hspace{0.4cm}
\xymatrix@C=-1pt@R=-2pt{a&&\ar[ll]_{g_1'}\cdot&&\cdot\ar[ll]_{g'}\\
&\alpha'& &\gamma'&\\
\ar[uu]^{w_1}\cdot&&\ar[ll]a\ar[uu]&&\ar[uu]_{u'}\cdot\ar[ll] \\
&\theta& &\beta'& \\
\ar[uu]\cdot&&\ar[ll]\cdot\ar[uu]&&\ar[ll]^{f_2}a\ar[uu]_{u_2'}}$$ whose corresponding composites
$(\alpha\circ_\mathrm{h}\gamma)\circ_\mathrm{v}(\theta\circ_\mathrm{h}\beta)$ and
$(\alpha'\circ_\mathrm{h}\gamma')\circ_\mathrm{v}(\theta\circ_\mathrm{h}\beta')$ show that $[g_1\circ_
\mathrm{h}g,u\circ_\mathrm{v}u_2]=[g_1'\circ_\mathrm{h}g',u'\circ_\mathrm{v}u_2']$, as required.
\end{proof}
\begin{lemma}
The given multiplication turns $\pi_1(\mathcal{G},a)$ into a group.
\end{lemma}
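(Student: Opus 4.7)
The plan is to verify the three group axioms—identity, associativity, inverses—one by one, exploiting throughout the already-proved well-definedness of~$\circ$ to select convenient filler squares. I take the neutral element to be $e=[\mathrm{I}^{\mathrm{h}}a,\mathrm{I}^{\hspace{-1pt}\mathrm{v}}a]$. For $e\circ[g,u]$, the identity square $\gamma=\mathrm{I}^{\hspace{-1pt}\mathrm{v}}\!g$ satisfies $\mathrm{s}^{\hspace{-1pt}\mathrm{v}}\gamma=g$ and $\mathrm{t}^{\mathrm{h}}\gamma=\mathrm{I}^{\hspace{-1pt}\mathrm{v}}a$ by Axiom~1, and substituting $\mathrm{t}^{\hspace{-1pt}\mathrm{v}}\gamma=g$ and $\mathrm{s}^{\mathrm{h}}\gamma=\mathrm{I}^{\hspace{-1pt}\mathrm{v}}(\mathrm{s}^{\mathrm{h}}g)$ into the definition of the product yields $[\mathrm{I}^{\mathrm{h}}a\circ_{\mathrm{h}}g,\mathrm{I}^{\hspace{-1pt}\mathrm{v}}(\mathrm{s}^{\mathrm{h}}g)\circ_{\mathrm{v}}u]=[g,u]$ by the identity laws in the side groupoids. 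The right-identity is proved symmetrically using $\gamma=\mathrm{I}^{\mathrm{h}}u$.

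For associativity, given three classes $[g_i,u_i]$ ($i=1,2,3$), first fix canonical fillers $\gamma_1$ (with $\mathrm{s}^{\hspace{-1pt}\mathrm{v}}\gamma_1=g_2$, $\mathrm{t}^{\mathrm{h}}\gamma_1=u_1$) and $\gamma_2$ (with $\mathrm{s}^{\hspace{-1pt}\mathrm{v}}\gamma_2=g_3$, $\mathrm{t}^{\mathrm{h}}\gamma_2=u_2$), writing $g=\mathrm{t}^{\hspace{-1pt}\mathrm{v}}\gamma_1$, $u=\mathrm{s}^{\mathrm{h}}\gamma_1$, $g'=\mathrm{t}^{\hspace{-1pt}\mathrm{v}}\gamma_2$, $u'=\mathrm{s}^{\mathrm{h}}\gamma_2$. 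A direct check of corners shows that the filling condition then produces a third square $\gamma_3$ with $\mathrm{s}^{\hspace{-1pt}\mathrm{v}}\gamma_3=g'$ and $\mathrm{t}^{\mathrm{h}}\gamma_3=u$, occupying the top-right slot of the $2\times 2$ grid whose top-left and bottom-right slots are filled by $\gamma_1$ and $\gamma_2$. For $([g_1,u_1]\circ[g_2,u_2])\circ[g_3,u_3]$, take $\gamma_1$ as first filler and $\gamma_3\circ_{\mathrm{v}}\gamma_2$ as second (its bottom is $g_3$ and its left edge is $u\circ_{\mathrm{v}}u_2$); for $[g_1,u_1]\circ([g_2,u_2]\circ[g_3,u_3])$, take $\gamma_2$ first and $\gamma_1\circ_{\mathrm{h}}\gamma_3$ second (bottom $g_2\circ_{\mathrm{h}}g'$, left $u_1$). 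Both computations produce \emph{literally the same} pair
\[
\bigl(g_1\circ_{\mathrm{h}}g\circ_{\mathrm{h}}\mathrm{t}^{\hspace{-1pt}\mathrm{v}}\gamma_3,\ \mathrm{s}^{\mathrm{h}}\gamma_3\circ_{\mathrm{v}}u'\circ_{\mathrm{v}}u_3\bigr),
\]
thanks to associativity of $\circ_{\mathrm{h}}$ and $\circ_{\mathrm{v}}$ in the side groupoids.

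For inverses, given $[g,u]$, apply the top-and-left-sides variant of Lemma~\ref{fc} to obtain a square $\gamma$ with $\mathrm{t}^{\hspace{-1pt}\mathrm{v}}\gamma=g^{\text{-}\!1_{\mathrm{h}}}$ and $\mathrm{t}^{\mathrm{h}}\gamma=u$; the compatibility $\mathrm{s}^{\mathrm{h}}g=\mathrm{t}^{\hspace{-1pt}\mathrm{v}}u$ coming from $(g,u)\in\mathcal{G}(a)$ ensures matching of the common top-left corner. Setting $h=\mathrm{s}^{\hspace{-1pt}\mathrm{v}}\gamma$ and $v=(\mathrm{s}^{\mathrm{h}}\gamma)^{\text{-}\!1_{\hspace{-1pt}\mathrm{v}}}$, one verifies $(h,v)\in\mathcal{G}(a)$, and using $\gamma$ itself as filler for $[g,u]\circ[h,v]$ gives
\[
[g,u]\circ[h,v]=\bigl[g\circ_{\mathrm{h}}g^{\text{-}\!1_{\mathrm{h}}},\,\mathrm{s}^{\mathrm{h}}\gamma\circ_{\mathrm{v}}(\mathrm{s}^{\mathrm{h}}\gamma)^{\text{-}\!1_{\hspace{-1pt}\mathrm{v}}}\bigr]=e.
\]
Hence every element has a right inverse, and the standard monoid fact that in an associative monoid with two-sided identity right inverses are automatically two-sided completes the proof. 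The conceptual heart of the argument is associativity: the main potential obstacle is handling the two bracketings with uncoordinated fillers, and it is circumvented by introducing the completing square $\gamma_3$ so that both bracketings reduce to the same pair of side-edge composites.
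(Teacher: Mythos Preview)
Your proof is correct and follows essentially the same approach as the paper: the same identity squares $\mathrm{I}^{\hspace{-1pt}\mathrm{v}}\!g$ and $\mathrm{I}^{\mathrm{h}}u$ for the unit, the same three-square staircase configuration $(\gamma_1,\gamma_2,\gamma_3)$ for associativity, and a filler with one edge $g^{\text{-}1_{\mathrm{h}}}$ for the inverse. The only cosmetic difference is that the paper verifies the inverse on both sides directly (using $\alpha^{\text{-}1_{\!\mathrm{v}}}$ and $\alpha^{\text{-}1_{\mathrm{h}}}$ as fillers), whereas you check only the right inverse and invoke the standard monoid fact; both are fine.
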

\begin{proof}
To see the associativity, let $[g_1,u_1],\ [g_2,u_2],\ [g_3,u_3]\in\pi_1(\mathcal{G},a)$, and choose $\gamma,\gamma'$
and $\gamma''$ any three squares as in the diagram (\ref{eas}) below. Then we have $$([g_1,u_1] \circ [g_2,u_2])\circ
[g_3,u_3]=[g_1\circ_\mathrm{h}g\circ_\mathrm{h}g',u\circ_\mathrm{v}u'\circ_\mathrm{v}u_3]=[g_1,u_1]\circ
([g_2,u_2]\circ[g_3,u_3]).$$
\begin{equation}\label{eas}
\xymatrix@R=12pt@C=12pt{a&\ar[l]_{g_1}\cdot&\cdot\ar[l]_{g}&\cdot\ar[l]_{g'}\\
&a\ar@{}[u]<-2pt>^(0.4){u_1}\ar[u]\ar@{}[ru]|{\textstyle \gamma}&\ar@{}[ru]|(0.55){\textstyle
\gamma'}\cdot\ar[l]^(0.4){g_2}\ar[u]&\cdot\ar[l]\ar[u]_u\\
&&a\ar@{}[u]<-2pt>^(0.3){u_2}\ar[u]\ar@{}[ru]|(0.55){\textstyle \gamma''}&\cdot\ar[l]^(0.4){g_3}\ar[u]_{u'}\\
&&&a\ar[u]_{u_3}}\end{equation}

The identity of $\pi_1(\mathcal{G},a)$ is $[\mathrm{I}^\mathrm{h}a,\mathrm{I}^\mathrm{v}a]$. In effect, if
$[g,u]\in\pi_1(\mathcal{G},a)$, then the diagrams
$$
\xymatrix@C=12pt@R=12pt{
a&\ar[l]_{g}x&\ar@<-0.1ex>@{-}[l]\ar@<0.1ex>@{-}[l]x\\&\ar[u]\ar@{}@<-2pt>[u]^-{u}\ar@{}[ur]|{\textstyle
\mathrm{I}^{\mathrm{h}}u}a&a\ar@<-0.1ex>@{-}[l]\ar@<0.1ex>@{-}[l]\ar[u]_{u}\\&&a\ar@<-0.1ex>@{-}[u]\ar@<0.1ex>@{-}[u]
}\hspace{0.4cm}\xymatrix@C=12pt@R=12pt{ a&a \ar@<-0.1ex>@{-}[l]\ar@<0.1ex>@{-}[l]&\ar[l]_g x\\&\ar@<-0.1ex>@{-}[u]
\ar@<0.1ex>@{-}[u]\ar@{}[ur]|{\textstyle
\mathrm{I}^{\mathrm{v}}g}a&x\ar@<-0.1ex>@{-}[u]\ar@<0.1ex>@{-}[u]\ar[l]^g\\&&a\ar[u]_u }
$$
show that
$$[g,u]\circ[\mathrm{I}^\mathrm{h}a,\mathrm{I}^\mathrm{v}a]=[g\circ_\mathrm{h}\mathrm{I}^\mathrm{h}x,u\circ_\mathrm{v}
\mathrm{I}\mathrm{^v}a]=[g,u]=[\mathrm{I}^\mathrm{h}a\circ_\mathrm{h}g,
\mathrm{I}\mathrm{^v}x\circ_\mathrm{v}u]=[\mathrm{I}^\mathrm{h}a,\mathrm{I}^\mathrm{v}a]\circ[g,u].$$

Finally, to see the existence of inverses, let $[g,u]\in\pi_1(\mathcal{G},a)$. By choosing any square $\alpha$ with
$\mathrm{t}^\mathrm{h}\alpha=u^{\text{-}1_\mathrm{v}}$ and $\mathrm{s}^\mathrm{v}\alpha=g^{\text{-}1_\mathrm{h}}$, that
is, of the form $$\xymatrix@R=10pt@C=10pt{a & \cdot\ar[l]_{f}
\\ \ar[u]^{u^{\text{-}1_\mathrm{v}}} \ar@{}[ru]|{\textstyle \alpha}\cdot &a  \ar[l]^{g^{\text{-}1_\mathrm{h}}}
 \ar[u]_{v}}$$ we find $[f,v]\!:=[\mathrm{t}^\mathrm{v}\alpha,\mathrm{s}^\mathrm{h}\alpha]\in\pi_1(\mathcal{G},a)$.
 Since the diagrams
$$
\xymatrix@C=12pt@R=12pt{
a&\ar[l]_{g}\cdot&\ar[l]_{g^{\text{-}1_\mathrm{h}}}a\\&\ar[u]\ar[u]^-{u}\ar@{}[ur]|(0.55){\textstyle
\alpha^{\text{-}1_\mathrm{v}}}a&\cdot\ar[l]^f\ar[u]_{v^{\text{-}1_\mathrm{v}}}\\&&a\ar[u]_v}\hspace{0.4cm}
\xymatrix@C=12pt@R=12pt{
a&\ar[l]_{f}\cdot&\ar[l]_{f^{\text{-}1_\mathrm{h}}}a\\&\ar[u]\ar[u]^-{v}\ar@{}[ur]|(0.55){\textstyle
\alpha^{\text{-}1_\mathrm{h}}}a&\cdot\ar[l]^g\ar[u]_{u^{\text{-}1_\mathrm{v}}}\\&&a\ar[u]_u}
$$
show that $[g,u]\circ [f,v]=[\mathrm{I}^\mathrm{h}a,\mathrm{I}^\mathrm{v}a]=[f,v]\circ [g,u]$, we have
$[g,u]^{\text{-}1}=[f,v]$.
\end{proof}
\vspace{0.2cm}
\subsection{The abelian groups $\pi_i(\mathcal{G},a)$, $i\geq 2$.}~

These are easier to define than the previous ones. For $i=2$, as in \cite[Section 2]{b-s}, we take
\begin{equation}
\pi_2(\mathcal{G},a)=\left\{\parbox{55pt}{\xymatrix@R=-1pt@C=-1pt{a& & a\ar@<-0.1ex>@{-}[ll]\ar@<0.1ex>@{-}[ll]\\ &
\alpha & \\ a\ar@<-0.1ex>@{-}[uu]\ar@<0.1ex>@{-}[uu] & & a\ar@<-0.1ex>@{-}[ll]\ar@<0.1ex>@{-}[ll]
\ar@<-0.1ex>@{-}[uu]\ar@<0.1ex>@{-}[uu]\\}}\right\}
\end{equation}
the set of all squares $\alpha\in\mathcal{G}$ whose boundary edges are
$\mathrm{s}^\mathrm{h}\alpha=\mathrm{t}^\mathrm{h}\alpha=\mathrm{I}^\mathrm{v}a$ and
$\mathrm{s}^\mathrm{v}\alpha=\mathrm{t}^\mathrm{v}\alpha=\mathrm{I}^\mathrm{h}a$.

By the general Eckman-Hilton argument, it is a consequence of the interchange law that, on $\pi_2(\mathcal{G},a)$,
operations $\circ_\mathrm{h}$ and $\circ_\mathrm{v}$ coincide and are commutative. In effect, for
$\alpha,\beta\in\pi_2(\mathcal{G},a)$,
\begin{equation*}
\begin{split}
\alpha\circ_\mathrm{h}\beta& =(\alpha\circ_\mathrm{v}\mathrm{I}a)\circ_\mathrm{h}(\mathrm{I}a\circ_\mathrm{v}\beta)=
(\alpha\circ_\mathrm{h}\mathrm{I}a)\circ_\mathrm{v}(\mathrm{I}a\circ_\mathrm{h}\beta)\\
& =\alpha\circ_\mathrm{v}\beta\\
& =(\mathrm{I}a\circ_\mathrm{h}\alpha)\circ_\mathrm{v}(\beta\circ_\mathrm{h}\mathrm{I}a)=
(\mathrm{I}a\circ_\mathrm{v}\beta)\circ_\mathrm{h}(\alpha\circ_\mathrm{v}\mathrm{I}a)\\
&=\beta\circ_\mathrm{h}\alpha.
\end{split}
\end{equation*}

Therefore, $\pi_2(\mathcal{G},a)$ is an abelian group with product
\begin{equation}
\alpha\circ_\mathrm{h}\beta=\alpha\circ_\mathrm{v}\beta\ ,
\end{equation}
identity  $\mathrm{I}a=\mathrm{I}^\mathrm{v}\mathrm{I}^\mathrm{h}a$, and inverses
$\alpha^{\text{-}1_\mathrm{h}}=\alpha^{\text{-}1_\mathrm{v}}$.

The higher homotopy groups of the double groupoid are defined to be trivial, that is,
\begin{equation}
\pi_i(\mathcal{G},a)=0\quad \text{if}\quad i\geq 3.
\end{equation}

\subsection{Weak equivalences}\label{we}~

A {\em double functor} $F:\mathcal{G}\to \mathcal{G'}$ between double categories takes objects, horizontal and vertical
morphisms, and squares in $\mathcal{G}$ to objects, horizontal and vertical morphisms, and squares in $\mathcal{G'}$,
respectively, in such a way that all the structure categories are preserved.

Clearly, each double functor $F:\mathcal{G}\to \mathcal{G'}$,
between double groupoids satisfying  the filling condition, induces
maps (group homomorphisms if $i>0$)
$$\pi_iF:\pi_i(\mathcal{G},a)\to \pi_i(\mathcal{G'},Fa)$$  for $i\geq 0$ and $a$ any object of
$\mathcal{G}$.  Call such a double functor a {\em weak equivalence}
if it induces isomorphisms $\pi_iF$ for all integers $i\geq 0$.

\section{A homotopy double groupoid for topological spaces.}\label{dgt}

Our aim here is to provide a new construction of a double groupoid for a topological space that, as we will see
later, captures the  homotopy 2-type of the space. For any given space $X$, the construction of this
{\em homotopy double groupoid}, denoted by $\dpi X$, is as follows:

The objects in $\dpi X$ are the paths in $X$, that is, the continuous maps ${u\!:\!I\!=\![0,1]\!\to\! X}$.

The groupoid of horizontal morphisms in $\dpi X$ is the category
with a unique morphism between  each pair $(u',u)$ of paths in $X$
such that $u'(1)=u(1)$, and, similarly, the  groupoid of vertical
morphisms in $\dpi X$ is the category having a unique morphism
 between each pair  $(v,u)$ of paths in $X$ such that $v(0)=u(0)$.

A square in $\dpi X$, $[\alpha]$, with a boundary as in
\begin{equation}\label{[alfa]}
\parbox{90pt}{\xymatrix@C=-4pt@R=-4pt{v'&&\ar[ll] v\\&[\alpha]&\\u'\ar[uu] &&\ar[ll] u\ar[uu]}}
\end{equation}
is the equivalence class, $[\alpha]$, of a map
$\alpha\!:\!I^2\!\to\! X$ whose effect on the boundary
$\partial(I^2)$ is such that $\alpha(x,0)=u(x),\ \alpha(0,y)=v(y),\
\alpha(1,1-y)=u'(y)$, and $\alpha(1-x,1)=v'(x)$, for $x,y\in I$. We
 call such an application a ``square in X" and draw it as
\begin{equation}\label{alfa}\xymatrix@C=0pt@R=0pt{\cdot&&\ar[ll]_{v'} \ar[dd]^(.4){u'}\cdot\\&
\alpha&\\ \cdot\ar[uu]^v \ar[rr]_u&&\cdot}\end{equation}
Two such mappings $\alpha, \alpha'$ are equivalent, and then represent the same square in $\dpi X$, whenever they are related by a homotopy relative to the sides of the square,  that is, if there exists a continuous map $H:I^2\times I\to X$ such that  $H(x,y,0)=\alpha(x,y),\ H(x,y,1)=\alpha'(x,y),\ H(x,0,t)=u(x),\ H(0,y,t)=v(y),\ H(x,1,t)=v'(1-x)$ and $H(1,y,t)=u'(1-y)$, for $x,y,t\in I$.

Given the squares in $\dpi X$
$$\xymatrix@C=-4pt@R=-4pt
        {           &       & w'            &       & w\ar[ll]          \\
                    &       &               & [\beta]   &               \\
            v''     &       & v'\ar[ll]\ar[uu]  &       &v\ar[ll]\ar[uu]    \\
                    & [\alpha']&                &[\alpha]   &               \\
            u''\ar[uu]  &       & u'\ar[ll]\ar[uu]  &       & u\ar[ll]\ar[uu]   \\}$$
the corresponding composite squares
\begin{center}
$\xymatrix@C=-6pt@R=1pt
        {           v''     &                       &   v\ar[ll]        \\
                    &  [\alpha']\!\circ_{\mathrm{h}}\![\alpha] &               \\
                u''\ar[uu] &                    & u\ar[ll]\ar[uu] &    \\}$
$\xymatrix@C=-6pt@R=1pt
        {           w'  &                       &   w\ar[ll]        \\
                    &  [\beta]\!\circ_{\mathrm{v}}\![\alpha]   &               \\
                u'\ar[uu] &                 & u\ar[ll]\ar[uu]   \\}$
\end{center}
are defined to be those represented by the squares in $X$
$$
\xymatrix@C=10pt@R=10pt{\cdot&&\cdot\ar[ll]_{v''}\ar[dd]^(.4){u''}\\ &\cdot\ar[ul]\ar@<3pt>@{}[ul]_(.4){v'}\ar[dr]\ar@<-3pt>@{}[dr]^(.4){u'}& \\
\cdot\ar[uu]^v\ar[rr]_u \ar@{}[ru]|(.6){\textstyle \alpha}\ar@{}[rruu]|(.75){\textstyle \alpha'}&&\cdot}
\hspace{0.1cm}
\xymatrix@C=10pt@R=10pt{\cdot\ar@{}[rd]|(.6){\textstyle \beta}\ar@{}[rrdd]|(.7){\textstyle \alpha}&&\cdot\ar[ll]_{w'}\ar[dd]^{u'}\ar[dl]\ar@<5pt>@{}[dl]_(.4){v'}\\ &\cdot& \\
\cdot\ar[uu]^w\ar[rr]_u \ar[ur]\ar@<-4pt>@{}[ur]^(.5){v}&&\cdot}
$$
obtained, respectively, by pasting $\alpha'$ with $\alpha$, and $\beta$ with $\alpha$, along their common pair of sides. That is,
\begin{equation}
[\alpha']\circ_{\mathrm{h}}[\alpha]=[\alpha'\circ_{\mathrm{h}}\alpha],
\ \ \ \
[\beta]\circ_{\mathrm{v}}[\alpha]=[\beta\circ_{\mathrm{v}}\alpha]
\end{equation}
where
$(\alpha'\circ_{\mathrm{h}}\alpha)(x,y)=\left\{\begin{array}{lll}
\alpha(2x,x+y)&\text{if} &    x\leq y,\ x+y\leq 1,\\
\alpha(x+y,2y)&\text{if}&  x\geq y,\ x+y\leq 1, \\
\alpha'(x+y-1,2y-1) &\text{if}&  x\leq y,\ x+y\geq 1, \\
\alpha'(2x-1,x+y-1)&\text{if}& x\geq y,\ x+y\geq 1,\\
\end{array}\right.$ \vspace{0.2cm}\newline
and $(\beta\circ_{\mathrm{v}}\alpha)(x,y)=\left\{\begin{array}{lll}
\alpha(2x-1,1-x+y) &\text{if}& x\geq y,\ x+y\geq 1,\\
\alpha(x-y,2y)&\text{if}& x\geq y,\ x+y\leq 1, \\
\beta(1+x-y,2y-1)&\text{if}&x\leq y,\ x+y\geq 1, \\
\beta(2x,y-x)&\text{if}&x\leq y,\ x+y\leq 1. \\
\end{array}\right.$\vspace{0.3cm}

It is not hard to see that both the horizontal and vertical compositions
of squares in $\dpi X$ are well defined. For example, to prove that
$[\alpha]=[\alpha_1]$ and $[\alpha']=[\alpha_1']$ imply
$[\alpha'\circ_{\mathrm{h}}\alpha]=[\alpha_1'\circ_{\mathrm{h}}\alpha_1]$,
let $H,H':I^2\times I\to X$ be homotopies ($rel\ \partial (I^2)$)
from $\alpha$ to $\alpha_1$ and from $\alpha'$ to $\alpha_1'$
respectively. Then, a homotopy $F:I^2\times I\to X$ is defined by
$$F(x,y,t)=\left\{\begin{array}{lll}
H(2x,x+y,t)&\text{if}&x\leq y,\ x+y\leq 1,\\
H(x+y,2y,t)&\text{if}& x\geq y,\ x+y\leq 1, \\
H'(x+y-1,2y-1,t)&\text{if}& x\leq y,\ x+y\geq 1 ,\\
H'(2x-1,x+y-1,t) &\text{if}&x\geq y,\ x+y\geq 1,\\
\end{array}\right.$$
showing that $\alpha'\circ_{\mathrm{h}}\alpha$ and
$\alpha_1'\circ_{\mathrm{h}}\alpha_1$ represent the same square in
$\dpi X$.

The horizontal identity square on a vertical morphism $(v,u)$ is
$$
\xymatrix@C=-3pt@R=-3pt{&v&&\ar@<-0.1ex>@{-}[ll]\ar@<0.1ex>@{-}[ll]
v\\\mathrm{I}^{\mathrm{h}}(v,u)=&&[e^{\mathrm{h}}]&
\\&u\ar[uu] &&\ar@<-0.1ex>@{-}[ll]\ar@<0.1ex>@{-}[ll] u\ar[uu]
&}$$
 where $e^{\mathrm{h}}(x,y)=\left\{ \!\begin{array}{lll}
v(y-x)&\text{if}&x\leq y,\\
u(x-y)&\text{if}&x\geq y,\\
\end{array}\right.$
whereas, for any horizontal morphism $(u',u)$, its corresponding
vertical identity square is
$${\xymatrix@C=-3pt@R=-3pt{&u'&&\ar[ll] u\\{\mathrm{I}}^{\mathrm{v}}(u',u)=&
&[e^{\mathrm{v}}]&\\&u'\ar@<-0.1ex>@{-}[uu]\ar@<0.1ex>@{-}[uu]& &\ar[ll] u\ar@<-0.1ex>@{-}[uu]\ar@<0.1ex>@{-}[uu] &}}$$
where $e^\mathrm{v}(x,y)=\left\{\! \begin{array}{lll}
u(x+y)&\text{if}&x+y\leq 1,\\
u'(2-x-y)&\text{if}&x+y\geq 1.\\
\end{array}\right.$

\begin{theorem}
$\dpi X$ is a double groupoid satisfying the filling condition.
\end{theorem}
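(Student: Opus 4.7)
The plan is to verify the three assertions of the theorem in turn: that $\dpi X$ is a double category, that each of its four constituent categories is a groupoid, and that the filling condition holds.

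For the double category structure, I would first check that the vertical composition $[\beta]\circ_{\mathrm{v}}[\alpha]$ is well defined on equivalence classes by the same argument the excerpt gives for $\circ_{\mathrm{h}}$: paste the given rel-boundary homotopies $\alpha\sim\alpha_1$ and $\beta\sim\beta_1$ together via the piecewise formula that defines $\beta\circ_{\mathrm{v}}\alpha$. Axioms~1 and~2 then follow by direct inspection of the explicit formulas for sources, targets, identities, and composites. Associativity of both $\circ_{\mathrm{h}}$ and $\circ_{\mathrm{v}}$, together with the unit laws for the identity squares $[e^{\mathrm{h}}]$ and $[e^{\mathrm{v}}]$, are verified by standard piecewise-linear reparameterizations of $I^{2}$, exactly as in the classical proof that $\pi_{1}(X,x_{0})$ is a group. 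The substantive axiom is the interchange law (Axiom~3): given four composable squares $\alpha,\beta,\gamma,\delta$ arranged in a $2\times 2$ pattern, both triple composites $(\alpha\circ_{\mathrm{h}}\beta)\circ_{\mathrm{v}}(\gamma\circ_{\mathrm{h}}\delta)$ and $(\alpha\circ_{\mathrm{v}}\gamma)\circ_{\mathrm{h}}(\beta\circ_{\mathrm{v}}\delta)$ are representable by maps $I^{2}\to X$ obtained by subdividing $I^{2}$ into four regions and restricting each of the four given squares to one region via an affine reparameterization; the two composites correspond to two such subdivisions of $I^{2}$, and an explicit affine isotopy of $I^{2}$ rel $\partial I^{2}$ carrying the first subdivision to the second delivers the required rel-boundary homotopy. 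This is the computational heart of the proof, but entirely routine.

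The groupoid conditions are then easy. The side categories of horizontal and vertical morphisms are tautologically groupoids, with inverse of $(u',u)$ being $(u,u')$ and symmetrically on the vertical side. For the horizontal and vertical categories of squares, tracing the paper's boundary convention shows that the horizontal inverse of $[\alpha]$ is represented by the map $(x,y)\mapsto\alpha(1-y,1-x)$, and the vertical inverse by $(x,y)\mapsto\alpha(y,x)$, namely the reflections of $I^{2}$ across the anti-diagonal and the main diagonal respectively. That $[\alpha]\circ_{\mathrm{h}}[\alpha]^{\text{-}1_{\mathrm{h}}}$ and $[\alpha]\circ_{\mathrm{v}}[\alpha]^{\text{-}1_{\mathrm{v}}}$ give the appropriate horizontal and vertical identity squares reduces to a standard ``fold and collapse'' rel-boundary homotopy, analogous to the proof that $f\ast f^{-1}$ is null-homotopic in the fundamental groupoid.

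Finally, for the filling condition, the given horizontal morphism $g$ on top and vertical morphism $u$ on the right share the top-right corner path and so, via the $\dpi X$ boundary convention, prescribe the continuous path data on three of the four sides of $\partial I^{2}$ (namely the bottom, left, and top) for the sought map $\alpha\!:\!I^{2}\to X$; the right side, and with it the bottom-left corner path, is free. One checks that the prescribed data agree at the bottom-left and top-left corners, as is forced by the matching conditions on $g$ and $u$. Since the union $U=(\{0\}\times I)\cup(I\times\{0\})\cup(I\times\{1\})$ is a retract of $I^{2}$ (for instance via radial projection from any point just to the right of $I^{2}$), the prescribed boundary data extend to a continuous $\alpha\!:\!I^{2}\to X$ by pre-composition with such a retraction. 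The restriction of $\alpha$ to the remaining side then determines the bottom-left corner path, and $[\alpha]$ is a filling square.
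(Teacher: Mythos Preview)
Your proposal is correct and follows essentially the same approach as the paper: verify the double-category axioms via explicit piecewise-linear rel-boundary homotopies on $I^2$, exhibit the diagonal-reflection inverses $\alpha(1-y,1-x)$ and $\alpha(y,x)$, and fill by extending off the three prescribed sides of $I^2$ (the paper phrases this via the cofibration $\partial I\hookrightarrow I$, which is equivalent to your retraction argument). One caveat: because the paper's horizontal and vertical compositions paste along the diagonals of $I^2$ rather than along midlines, the interchange homotopy is considerably more intricate than an ``affine isotopy''---the paper writes out a twenty-region piecewise formula with an accompanying figure---and Axioms 1(iii) and 2(iii) likewise require genuine homotopies rather than direct inspection, so ``entirely routine'' undersells the bookkeeping even if the strategy is exactly as you describe.
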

\begin{proof}
  The horizontal composition of squares in $\dpi X$ is associative since, for any three composable squares, say \parbox{95pt}{$\xymatrix@C=-2pt@R=-2pt{\cdot&&\ar[ll]\cdot  & &\cdot \ar[ll] & & \cdot\ar[ll]\\
                                    &[\alpha'']& & [\alpha'] & & [\alpha]\\
                                    \cdot \ar[uu] &&\cdot\ar[ll] \ar[uu]  &&\cdot\ar[ll] \ar[uu] &&\cdot\ar[ll] \ar[uu]}$}, a relative homotopy $(\alpha''\circ_{\mathrm{h}}\alpha')\circ_{\mathrm{h}}\alpha \overset{H}\to \alpha''\circ_{\mathrm{h}}(\alpha'\circ_{\mathrm{h}}\alpha)$ is given by the formula \newline

\noindent $H(x,y,t)=$
$$\left\{\begin{array}{lll}
\alpha(\frac{4x}{2-t},\frac{(2+t)x+(2-t)y}{2-t})&  \text{ if} &  {\scriptstyle x\leq y,\ (2-t)(1-y)\geq (2+t)x,}\\[5pt]
\alpha(\frac{(2-t)x+(2+t)y}{2-t},\frac{4y}{2-t})&    \text{ if}  &  {\scriptstyle x\geq y,\ (2-t)(1-x)\geq (2+t)y,} \\[5pt]
\alpha'({\scriptstyle t(1+x-y)+2(x+y-1),x+3y-2+t(1+x-y)} ) &  \text{ if} &  {\scriptstyle x\leq y,\,(2-t)(1-y)\leq  (2+t)x,\,(1+t)x\leq(3-t)(1-y),}\\[5pt]
\alpha'({\scriptstyle 3x+y-2+t(1-x+y),t(1-x+y)+2(x+y-1)})  &  \text{ if}  & {\scriptstyle  x\geq y,\,
(2-t)(1-x)\leq (2+t)y,\,(1+t)y\leq (3-t)(1-x),}\\[5pt]
\alpha''(\frac{x+3y-3+t(1+x-y)}{1+t},\frac{t-3+4y}{1+t})& \text{ if} &{\scriptstyle x\leq y,\ (1+t)x\geq (1-y)(3-t),}\\[5pt]
\alpha''(\frac{t-3+4x}{1+t},\frac{3x+y-3+t(1-x+y)}{1+t})& \text{ if} & {\scriptstyle x\geq y,\ (1+t)y\geq (3-t)(1-x)}.\\
\end{array}\right.$$

And, similarly, we prove the associativity for the vertical
composition of squares in $\dpi X$. For identities, let $[\alpha]$
be any square in $\dpi X$ as in (\ref{[alfa]}). Then, a relative
homotopy between $\alpha$ and
$\alpha\circ_{\mathrm{h}}\!e^\mathrm{h}$ is given by the map
$H:I^2\times I\to X$ defined by
$$H(x,y,t)=\left\{\begin{array}{lll}
v({\scriptstyle {y-x}}) & \text{if} &{\scriptstyle x\leq y,\ x\leq \frac{1}{2}(1-t)(1+x-y)},\\[5pt]
u({\scriptstyle x-y}) & \text{if} &{\scriptstyle x\geq y,\ x\leq \frac{1}{2}(1-t)(1+x-y)},\\[5pt]
\alpha(\frac{x+y-1+t(1+x-y)}{1+t},\frac{2y+t-1}{1+t}) & \text{if} &{\scriptstyle \frac{1}{2}(1-t)(1+x-y)\leq x\leq y},\\[5pt]
\alpha(\frac{2x+t-1}{1+t},\frac{x+y-1+t(1-x+y)}{1+t}) & \text{if} &{\scriptstyle \frac{1}{2}(1-t)(1-x-y)\leq y\leq x}.\\
\end{array}\right.$$
 Therefore, $[\alpha ] \circ_{\mathrm{h}} \mathrm{I}^{\mathrm{h}}(v,u)=[\alpha]$; and similarly we prove the remaining
needed equalities: $ [\alpha]=\mathrm{I}^{\mathrm{h}} \circ_{\mathrm{h}} [\alpha]= [\alpha ] \circ_{\mathrm{v}}
\mathrm{I}^{\mathrm{v}}= \mathrm{I}^{\mathrm{v}} \circ_{\mathrm{v}} [\alpha]$.

Let us now describe inverse squares in $\dpi X$. For any given square $[\alpha]$ as in (\ref{[alfa]}), its respective horizontal and vertical inverses
\begin{center}
$\xymatrix@C=-5pt@R=-2pt
        {           v   &                       &   v'\ar[ll]       \\
                    &  [\alpha]^{\text{-}1_{\mathrm{h}}}           &               \\
                u\ar[uu] &                  & u'\ar[ll]\ar[uu] &,   \\}$
$\xymatrix@C=-5pt@R=-2pt
        {           u'  &                       &   u\ar[ll]        \\
                    &  [\alpha]^{\text{-}1_{\!\mathrm{v}}}           &               \\
                v'\ar[uu] &                 & v\ar[ll]\ar[uu]   \\}$
\end{center}
are represented by the squares in $X$,
$\alpha^{\text{-}1_{\mathrm{h}}}, \alpha^{\text{-}1_v}:I^2\to X$,
defined respectively by the formulas
$$\alpha^{\text{-}1_{\mathrm{h}}}(x,y)=\alpha(1-y,1-x),\hspace{0.5cm}
\alpha^{\text{-}1_{\!\mathrm{v}}}(x,y)=\alpha(y,x).$$

The equality
$[\alpha^{\text{-}1_{\mathrm{h}}}]\circ_{\mathrm{h}}[\alpha]=\mathrm{I}^{\mathrm{h}}(v,u)$
holds, thanks to the homotopy $H:I^2\times I:\to X$ defined by
$$H(x,y,t)=\left\{\begin{array}{lll}
\alpha({\scriptstyle 2x(1-t),(1-2t)x+y}) & \text{if} &{\scriptstyle x\leq y,\ x+y\leq 1},\\[5pt]
\alpha({\scriptstyle x+(1-2t)y,2y(1-t)}) & \text{if} &{\scriptstyle x\geq y,\ x+y\leq 1},\\[5pt]
\alpha({\scriptstyle 2(ty-t-y+1),2(ty-t+1)-x-y}) & \text{if} &{\scriptstyle x\leq y,\ x+y\geq 1},\\[5pt]
\alpha({\scriptstyle (2x-2)t+2-x-y,(2x-2)t+2-2x }) & \text{if} &{\scriptstyle x\geq y,\ x+y\geq 1}.\\
\end{array}\right.$$
And, similarly, one sees the remaining equalities
$[\alpha]\circ_{\mathrm{h}}[\alpha]^{\text{-}1_{\mathrm{h}}}=\mathrm{I}^{\mathrm{h}}$,
$[\alpha]\circ_{\mathrm{v}}[\alpha]^{\text{-}1_{\!\mathrm{v}}}=\mathrm{I}^{\mathrm{v}}$
and
$[\alpha]^{\text{-}1_{\!\mathrm{v}}}\circ_{\mathrm{v}}[\alpha]=\mathrm{I}^{\mathrm{v}}$.

By construction of $\dpi X$, conditions (i) and (ii) in {\bf Axiom
1} are clearly satisfied. For (iii) in {\bf Axiom 1}, we need to
prove that, for any path $u:I\to X$, the equality
$\mathrm{I}^{\mathrm{h}}(u,u)=\mathrm{I}^{\mathrm{v}}(u,u)$ holds.
But this follows from the relative homotopy $H:e^{\mathrm{h}}\to
e^{\mathrm{v}}$ defined by
$$H(x,y,t)=\left\{\begin{array}{lll}
u({\scriptstyle y-x}) & \text{if} &{\scriptstyle x\leq y,\ (1-t)(1-y)\geq (1+t)x},\\[5pt]
u({\scriptstyle 2y-1+t(1+x-y)}) &\text{if}  &{\scriptstyle x\leq y,\ x+y\leq 1,\ (1-t)(1-y)\leq (1+t)x},\\[5pt]
u({\scriptstyle x-y})  & \text{if}  &{\scriptstyle   x\geq y,\ (1-t)(1-x)\geq (1+t)y},\\[5pt]
u({\scriptstyle 2x-1+t(1-x+y)}) & \text{if}  &{\scriptstyle x\geq y,\ x+y\leq 1,\ (1-t)(1-x)\leq (1+t)y},\\[5pt]
u({\scriptstyle 1-2x+t(1+x-y)}) & \text{if}  &{\scriptstyle x\leq y,\ x+y\geq 1,\ (1+t)(1-y)\geq (1-t)x},\\[5pt]
u({\scriptstyle y-x}) & \text{if}  &{\scriptstyle x\leq y,\ x+y\geq 1,\ (1+t)(1-y)\leq (1-t)x},\\[5pt]
u({\scriptstyle 1-2y+t(1-x+y)}) & \text{if}  &{\scriptstyle x\geq y,\ x+y\geq 1,\ (1+t)(1-x)\geq (1-t)y},\\[5pt]
u({\scriptstyle x-y}) & \text{if}  &{\scriptstyle x\geq y,\  (1+t)(1-x)\leq (1-t)y}.\\
\end{array}\right.$$

The given definition of how squares in $\dpi X$ compose makes
the conditions (i) and (ii) in {\bf Axiom 2} clear, and  the
remaining condition (iii) holds since, for any three paths $u,v,w:I\to
X$  with $u(1)=v(1)=w(1)$,  there is a relative homotopy between
$e^{\mathrm{v}}{(w,v)}\circ_{\mathrm{h}} e^{\mathrm{v}}{(v,u)}$ and
$e^{\mathrm{v}}{(w,u)}$, defined by

$$H(x,y,t)=\left\{\begin{array}{lll}
u({\scriptstyle y-x+\frac{4x}{1+t}}) & \text{if}  &{\scriptstyle x\leq y,\ (1+t)(1-y)\geq (3-t)x},\\[5pt]
v({\scriptstyle 2-3x-y+t(1+x-y)}) & \text{if}  &{\scriptstyle x\leq y,\ x+y\leq 1,\ (1+t)(1-y)\leq (3-t)x},\\[5pt]
u({\scriptstyle x-y+\frac{4y}{1+t}})  & \text{if}  &{\scriptstyle   x\geq y,\ (1+t)(1-x)\geq (3-t)y},\\[5pt]
v({\scriptstyle 2-x-3y+t(1-x+y)}) & \text{if}  &{\scriptstyle x\geq y,\ x+y\leq 1,\ (1+t)(1-x)\leq (3-t)y},\\[5pt]
v({\scriptstyle x+3y-2+t(1+x-y)}) & \text{if}  &{\scriptstyle x\leq y,\ x+y\geq 1,\ (3-t)(1-y)\geq (1+t)x},\\[5pt]
w({\scriptstyle y-x+\frac{4(y-1)}{1+t}}) & \text{if} &{\scriptstyle x\leq y,\  (3-t)(1-y)\leq (1+t)x},\\[5pt]
v({\scriptstyle 3x+y-2+t(1-x+y)}) & \text{if}  &{\scriptstyle x\geq y,\ x+y\geq 1,\ (3-t)(1-x)\geq (1+t)y},\\[5pt]
w({\scriptstyle x-y+\frac{4(1-x)}{1+t}}) & \text{if}  &{\scriptstyle x\geq y,\  (3-t)(1-x)\leq (1+t)y}.\\
\end{array}\right.$$
And, similarly,  one proves the equality $\mathrm{I}^{\mathrm{h}}(w,u)
\circ_{\mathrm{v}} \mathrm{I}^{\mathrm{h}}(w,u)=
\mathrm{I}^{\mathrm{h}}(w,u)$ , for any three paths in $X$,
$u,v,w:I\to X$ with $u(0)=v(0)=w(0)$.

Then, it only remains to prove the interchange law in {\bf Axiom 3}.
To do so, let
$$\xymatrix@C=-3pt@R=-3pt
        {   w''         &       & w'\ar[ll]         &       & w\ar[ll]          \\
                    & [\delta]  &               & [\beta]   &               \\
            v''\ar[uu]      &       & v'\ar[ll]\ar[uu]  &       &v\ar[ll]\ar[uu]    \\
                    &[\gamma]&              &[\alpha]   &               \\
            u''\ar[uu]  &       & u'\ar[ll]\ar[uu]  &       & u\ar[ll]\ar[uu]   \\}$$
be squares in $\dpi X$. Then, the required equality follows from the
existence of the relative homotopy
$(\delta\circ_{\mathrm{h}}\beta)\circ_{\mathrm{v}}(\gamma\circ_{\mathrm{h}}\alpha)\to
(\delta\circ_{\mathrm{v}}\gamma)\circ_{\mathrm{h}}(\beta\circ_{\mathrm{v}}\alpha)$
defined by the map $H:I^2\times I\to X$ such that

\hspace{1cm}$H(x,y,t)=$

\noindent $\begin{array}{ll}
\parbox{15pt}{\xy*{1}*\cir<7pt>{}\endxy}\ \alpha({\scriptstyle x+y-2ty,4y})&
  {\scriptstyle 1-x+2ty\geq 5y,\ } {\scriptstyle  x-3y\geq 2ty},\\[3pt]
\parbox{15pt}{\xy*{2}*\cir<7pt>{}\endxy}\ \alpha(\frac{2(x-y)}{1+t},\frac{2(x-tx+y+3ty)}{2+t-t^2})&
{\scriptstyle 2+t-t^2-6x+4tx+2y\geq 8ty,\ } {\scriptstyle (3+2t)y\geq x \geq y},\\[3pt]
\parbox{15pt}{\xy*{3}*\cir<7pt>{}\endxy}\ \alpha(\frac{t^2-2t+2x-2y+4ty}{1-2t+2t^2},\frac{3t^2-t(1+4x)+2(x+y)}{2-4t+4t^2} ) &
{\scriptstyle t^2+t(4x-3)\geq2(x+y-1),\ }{\scriptstyle t^2-2x+6y\geq t(4x+8y-3), }\\[-5pt]
                                            &{\scriptstyle t^2+6x+t(8y-4x-1)\geq
                                            2(1+y)},\\[-3pt]
\parbox{15pt}{\xy*{4}*\cir<7pt>{}\endxy}\ \alpha(\frac{t-2(x+y)}{t-2},\frac{2t(x+3y-1)-8y}{t^2-t-2})  &
{\scriptstyle  1\geq x+y,\ }{\scriptstyle x-1\geq
(2t-5)y,\,}{\scriptstyle 2x-t^2-6y\geq t(3-4x-8y)},\\ [3pt]
\parbox{15pt}{\xy*{5}*\cir<7pt>{}\endxy}\ v'({\scriptstyle 3-t-4x})   &   {\scriptstyle  x\geq y,\ }{\scriptstyle 1\geq x+y,\ }{\scriptstyle 2(x+y-1)\geq t^2+t(4x-3)},\\[3pt]
\parbox{15pt}{\xy*{6}*\cir<7pt>{}\endxy}\ \gamma( {\scriptstyle 4x-3,\,x+y-1-2t(x-1)})& {\scriptstyle 5x+y-5\geq 2t(x-1),}{\scriptstyle 2t(x-1)+3x\geq y+2},\\[3pt]
\parbox{15pt}{\xy*{7}*\cir<7pt>{}\endxy}\ \gamma(\frac{6+t^2-8x+t(6x+2y-7)}{t^2-t-2},\frac{2(x+y-1)}{2-t})&  {\scriptstyle x+y\geq 1,\,}{\scriptstyle 5+2t(x-1)\geq 5x+y,}\\[-5pt]
                                                        & {\scriptstyle 9t+6x\geq 4+t^2+8tx+2y+4ty},\\[-3pt]
\parbox{15pt}{\xy*{8}*\cir<7pt>{}\endxy}\ \gamma(\frac{t+t^2-4ty+2(x+y-1)}{2-4t+4t^2},\frac{1+t^2+4t(x-1)-2x+2y}{1-2t+2t^2})&  {\scriptstyle  t+t^2+2(x+y-1)\geq 4ty,\,}{\scriptstyle t^2+2(1+x-3y)\geq t(8x-y-3),}\\[-5pt]
                                                        &{\scriptstyle 4+t^2-6x+2y\geq t(9-8x-4y)},\\[-3pt]
\parbox{15pt}{\xy*{9}*\cir<7pt>{}\endxy}\ \gamma(\frac{t^2-2(x+y-1)+t(3-6x+2y)}{t^2-t-2},\frac{1+t-2x+2y}{1+t})& {\scriptstyle  8tx+6y-4ty\geq 2+3t+t^2+2x,\,}{\scriptstyle x\geq y,\ }{\scriptstyle 2+y\geq 2t(x-1)+3x},\\[3pt]
\parbox{15pt}{\xy*{10}*\cir<7pt>{}\endxy}\ v'({\scriptstyle 4y-1-t}) &  {\scriptstyle  x\geq y,\ }{\scriptstyle x+y\geq 1,\,}{\scriptstyle 2+4ty\geq t+t^2+2x+2y},\\[3pt]
\parbox{15pt}{\xy*{11}*\cir<7pt>{}\endxy}\ \beta({\scriptstyle 4x,x+y-2tx}) &  {\scriptstyle 1+2tx\geq y+5x,\ }{\scriptstyle y\geq 3x+2tx},\\[3pt]
\parbox{15pt}{\xy*{12}*\cir<7pt>{}\endxy}\ \beta(\frac{2t(y+3x-1)-8x}{t^2-t-2},\frac{t-2(x+y)}{t-2})&  {\scriptstyle  1\geq x+y,\ }{\scriptstyle y+(5-2t)x\geq 1,\,}{\scriptstyle 2y+t(3-4y-8x)-6x\geq t^2},\\[3pt]
\parbox{15pt}{\xy*{13}*\cir<7pt>{}\endxy}\ \beta(\frac{3t^2-t(1+4y)+2(x+y)}{2-4t+4t^2},\frac{t^2-2t+2y-2x+4tx}{1-2t+2t^2})&  {\scriptstyle  t^2+t(4y-3)\geq 2(x+y-1),\,}{\scriptstyle t^2+t(3-4y-8x)+6x\geq 2y,}\\[-5pt]
                                                        & {\scriptstyle t^2+6y-2(1+x)\geq t(1+4y-8x)},\\[-3pt]
\parbox{15pt}{\xy*{14}*\cir<7pt>{}\endxy}\ \beta(\frac{2(y-ty+x+3tx)}{2+t-t^2},\frac{2(y-x)}{1+t})& {\scriptstyle  2+t+4ty+2x\geq t^2+6y+8tx,\,} {\scriptstyle (3+2t)x\geq y\geq x },\\[3pt]
\parbox{15pt}{\xy*{15}*\cir<7pt>{}\endxy}\ v'({\scriptstyle 3-t-4y}) & {\scriptstyle  y\geq x,\ } {\scriptstyle 1\geq x+y,\ }{\scriptstyle 2(x+y-1)\geq t^2+t(3-4y)},\\[3pt]
\parbox{15pt}{\xy*{16}*\cir<7pt>{}\endxy}\ \delta({\scriptstyle  2t(1-y)+x+y-1,4y-3}) &  {\scriptstyle  5y+x\geq 5+2t(y-1),\ }{\scriptstyle 2t(y-1)+3y\geq x+2},\\[3pt]
\parbox{15pt}{\xy*{17}*\cir<7pt>{}\endxy}\ \delta(\frac{2(x+y-1)}{2-t},\frac{6+t^2-8y+t(6y+2x-7)}{t^2-t-2})&
{\scriptstyle x+y\geq 1,\,}
{\scriptstyle 9t+6y-8ty-2x-4tx\geq 4+t^2,}\\[-5pt] &{\scriptstyle 5+2t(y-1)\geq 5y+x},\\[-3pt]
\parbox{15pt}{\xy*{18}*\cir<7pt>{}\endxy}\ \delta(\frac{1+t^2+4t(y-1)-2y+2x}{1-2t+2t^2},
\frac{t+t^2-4tx+2(x+y-1)}{2-4t+4t^2})& {\scriptstyle  t+t^2-4tx\geq 2(1-x-y),\,}{\scriptstyle t^2+2(1+y-3x)
\geq t(8y-4x-3),}\\[-4pt]                                                                                                        &{\scriptstyle 4+t^2-6y+2x\geq t(9-8y-4x)},\\[3pt]
\parbox{15pt}{\xy*{19}*\cir<7pt>{}\endxy}\
\delta(\frac{1+t-2y+2x}{1+t},\frac{t^2-2(x+y-1)+t(3-6y+2x)}{t^2-2-t})&
{\scriptstyle  8ty+6x-4tx\geq 2+3t+t^2+2y,\,}{\scriptstyle y\geq
x,\, }{\scriptstyle 2-3y+x\geq 2t(y-1)},\\ [3pt]
\parbox{15pt}{\xy*{20}*\cir<7pt>{}\endxy}\  v'({\scriptstyle
4y-1-t}) &  {\scriptstyle  y\geq x,\ }{\scriptstyle x+y\geq
1,\,}{\scriptstyle 2+4tx\geq t+t^2+2y+2x},
\end{array}$

\noindent where  parts \parbox{15pt}{\xy*{n}*\cir<6pt>{}\endxy} in the
homotopy $H(x,y,t)$ above correspond to the  areas with $(x,y)\in
I^2$ shown in Figure 1 below.
\begin{figure}[h!]
\label{figura}\hfill \xy
@={(0,0),(50,0),(0,50),(50,50),(0,0),(25,8.33),(16.66,16.66),
(33.33,16.66),(25,8.33),(50,0),(50,50),(41.66,25),(33.33,16.66),
(33.33,33.33),(41.66,25),(50,0),(16.66,33.33),(16.66,16.66),(8.33,25),(0,0),
(0,50),(8.33,25),(16.66,33.33),(33.33,33.33),(25,41.66),(50,50),(0,50),
(25,41.66),(16.66,33.33),(16.66,16.66)}, s0="pre"
@@{;"pre";**@{-}="pre"} @={(4,25)} @@{*{\scriptstyle
11}*\cir<5pt>{}} @={(13,25)} @@{*{\scriptstyle 13}*\cir<5pt>{}}
@={(20,25)} @@{*{\scriptstyle 15}*\cir<5pt>{}} @={(30,25)}
@@{*{\scriptstyle 10}*\cir<5pt>{}} @={(37,25)} @@{*{\scriptstyle
8}*\cir<5pt>{}} @={(46,25)} @@{*{\scriptstyle 6}*\cir<5pt>{}}
@={(25,4)} @@{*{\scriptstyle 1}*\cir<5pt>{}} @={(25,13)}
@@{*{\scriptstyle 3}*\cir<5pt>{}} @={(25,20)} @@{*{\scriptstyle
5}*\cir<5pt>{}} @={(25,30)} @@{*{\scriptstyle 20}*\cir<5pt>{}}
@={(25,37)} @@{*{\scriptstyle 18}*\cir<5pt>{}} @={(25,46)}
@@{*{\scriptstyle 16}*\cir<5pt>{}} @={(16.66,10)} @@{*{\scriptstyle
2}*\cir<5pt>{}} @={(10,16.66)} @@{*{\scriptstyle 14}*\cir<5pt>{}}
@={(33.33,10)} @@{*{\scriptstyle 4}*\cir<5pt>{}} @={(40,16.66)}
@@{*{\scriptstyle 7}*\cir<5pt>{}} @={(33.33,40)} @@{*{\scriptstyle
19}*\cir<5pt>{}} @={(40,33.33)} @@{*{\scriptstyle 9}*\cir<5pt>{}}
@={(16.66,40)} @@{*{\scriptstyle 17}*\cir<5pt>{}} @={(10,33.33)}
@@{*{\scriptstyle 12}*\cir<5pt>{}}
\endxy\hfill\
\caption{ }
\end{figure}

Finally, we observe that $\dpi X$ satisfies the filling condition.
Suppose a configuration of morphisms in $\dpi X$
$$\xymatrix@R=10pt@C=10pt{v' &v \ar[l] \\ & u \ar[u]}$$
is given. This means we have paths $u,v,v':I\to X$ with $u(0)=v(0)$ and $v(1)=v'(1)$. Since the inclusion $\partial I\hookrightarrow I$ is a cofibration, the map $f:(\{0\}\times I)\cup (I\times\partial I)\to X$ with $f(0,t)=v(t)$, $f(t,0)=u(t)$ and $f(t,1)=v'(1-t)$ for $0\leq t\leq 1$, has an extension to a map $\alpha:I\times I\to X$, which precisely represents a square in $\dpi X$ of the form
$$\xymatrix@C=-2pt@R=-2pt{v'&&\ar[ll] v\\&[\alpha]&\\u'\ar[uu] &&\ar[ll] u\ar[uu]}$$
where $u':I\to X$ is the path $u'(t)=\alpha(1,1-t)$. Hence, $\dpi X$ verifies the filling condition.
\end{proof}

In the previous Section \ref{h-g} we  introduced homotopy groups for double groupoids satisfying the filling
condition. The next proposition  provides greater specifics on the relationship between the homotopy groups of the associated homotopy double
groupoid $\dpi X$ to a topological space $X$ and the corresponding for $X$.

\begin{theorem}\label{ps}
For any space $X$, any path $u:I\to X$, and $0\leq i\leq 2$, there
is an isomorphism $$\pi_i(\dpi X,u)\cong\pi_i(X,u(0)).$$
\end{theorem}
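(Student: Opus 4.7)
The plan is to construct explicit maps $\phi_i : \pi_i(\dpi X, u) \to \pi_i(X, u(0))$ for $i = 0, 1, 2$ and verify each is a bijection (group isomorphism for $i \geq 1$). The guiding principle is that the algebraic relations defining $\pi_i(\dpi X, u)$ translate directly, via the representatives of morphisms as paths and of squares as maps $I^2 \to X$, into standard homotopies in $X$. For $i = 0$, two paths $u',u''$ are connected in $\dpi X$ exactly when an intermediate path $r$ exists with $r(0) = u'(0)$ and $r(1) = u''(1)$, which is equivalent to $u'(0)$ and $u''(0)$ lying in the same path component of $X$; hence $\phi_0([u']) := [u'(0)]$ is a well-defined bijection, surjective via constant paths.

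For $i = 1$, each pair $(g, w) \in \mathcal{G}(u)$ amounts to the datum of a path $q : I \to X$ with $q(0) = u(0)$ and $q(1) = u(1)$, since the unique horizontal morphism $q \to u$ and unique vertical morphism $u \to q$ in $\dpi X$ exist precisely under those conditions. I would set $\phi_1([g, w]) := [u \cdot \bar q]$, the class of the loop at $u(0)$. The squares $\alpha, \alpha'$ witnessing $(g, w) \sim (g', w')$ in the paper's definition are maps $I^2 \to X$ sharing bottom, right and top sides but differing only in the left side ($q$ versus $q'$); glueing them along the three shared sides produces a disk in $X$ whose boundary is the loop $q \cdot \bar{q'}$, so $q \simeq q'$ rel endpoints and hence $[u \bar q] = [u \bar{q'}]$. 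For the group law, the composition $[g_1, w_1] \circ [g_2, w_2]$ in the paper uses a filling square $\gamma : I^2 \to X$ whose boundary traces the loop $q_2 \cdot \bar u \cdot q_1 \cdot \bar{\tilde q}$ at $u(0)$; since $\gamma$ fills, this loop is null-homotopic, so $\tilde q \simeq q_2 \cdot \bar u \cdot q_1$ rel endpoints, whence $\phi_1$ of the composite equals $[u \cdot \bar q_1 \cdot u \cdot \bar q_2] = [u \bar q_1][u \bar q_2]$. Surjectivity follows by sending a loop $\ell$ at $u(0)$ to the pair corresponding to $q := \bar\ell \cdot u$.

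For $i = 2$, a representative $\alpha : I^2 \to X$ has boundary factoring canonically through $u : I \to X$, and the identity $\mathrm{I}u$ is represented by the canonical square $e^{\mathrm{v}}_u = u \circ \psi$, where $\psi : I^2 \to I$ extends the boundary map $\partial I^2 \to I$ determined by $u$. Since $\alpha$ and $e^{\mathrm{v}}_u$ share the same boundary, gluing $\alpha$ with $(e^{\mathrm{v}}_u)^{-1}$ along $\partial I^2$ yields a based map $(S^2, *) \to (X, u(0))$, and I would define $\phi_2([\alpha]) := [\alpha \cup (e^{\mathrm{v}}_u)^{-1}]$. Well-definedness under homotopy rel $\partial I^2$ is immediate; the inverse map attaches a based sphere $\gamma$ to $e^{\mathrm{v}}_u$ via a standard pinch construction to produce a square with the correct boundary. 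The group-homomorphism property uses $e^{\mathrm{v}}_u \circ_\mathrm{h} e^{\mathrm{v}}_u \simeq e^{\mathrm{v}}_u$ rel $\partial I^2$ to identify the glued sphere for $\alpha \circ_\mathrm{h} \beta$ with the standard pinch-map concatenation of the individual glued spheres.

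The chief technical hurdle is the group-law verification for $\phi_2$: matching the explicit pasting-with-rescaling formulas of Section~\ref{dgt} defining $\circ_\mathrm{h}$ and $\circ_\mathrm{v}$ on squares of $\dpi X$ with the standard pinch-map concatenation in $\pi_2(X, u(0))$. Although the Eckmann--Hilton argument in Section \ref{h-g} guarantees that the two products on $\pi_2(\dpi X, u)$ coincide, translating either one explicitly into the $\pi_2$ operation of $X$ requires producing a concrete relative homotopy between the combinatorial pasting formula and the standard concatenation of based spheres, most naturally by routing through the identity square $e^{\mathrm{v}}_u$.
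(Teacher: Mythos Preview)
Your treatment of $i=0$ and $i=1$ is essentially the paper's, rephrased more geometrically. The paper routes the $i=1$ case through the fundamental groupoid $\Pi X$, defining the isomorphism as $[(u,v),(v,u)]\mapsto [u]^{-1}\circ[v]$, while you write the analogous loop $[u\cdot\bar q]$; these differ only by a global inverse, and your disk-gluing argument for well-definedness and the homomorphism property is exactly the content of the paper's computation with the square $\gamma$.

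For $i=2$, your approach is genuinely different, and your ``chief technical hurdle'' is real but self-inflicted. The paper avoids it entirely by first conjugating: the vertical composition
\[
[\alpha]\;\longmapsto\; \mathrm{I}^{\mathrm{h}}(c_x,u)\circ_{\mathrm{v}}[\alpha]\circ_{\mathrm{v}}\mathrm{I}^{\mathrm{h}}(u,c_x)
\]
(where $x=u(0)$) is a group isomorphism $\pi_2(\dpi X,u)\cong \pi_2(\dpi X,c_x)$, valid in any double groupoid. But at the constant path $c_x$, an element of $\pi_2(\dpi X,c_x)$ is literally a rel-$\partial I^2$ homotopy class of maps $I^2\to X$ constant on the boundary, i.e.\ exactly an element of $\pi_2(X,x)$, and the $\circ_{\mathrm{h}}$ pasting formula on such constant-boundary squares is visibly homotopic to ordinary concatenation. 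So the paper gets the group isomorphism for free, with no sphere-gluing or pinch constructions.

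Your direct route would work, but it obliges you to produce an explicit rel-$\partial I^2$ homotopy comparing the diagonal pasting formula for $\circ_{\mathrm{h}}$ (with non-constant boundary $u$) to the pinch-map concatenation of the glued spheres. That is doable but unpleasant. The moral: when the basepoint of $\pi_2(\mathcal{G},a)$ is inconvenient, change it within $\mathcal{G}$ first, rather than fighting the geometry at $a$.
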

\begin{proof}
For any two points $x,y\in X$,  the constant paths $c_x$ and $c_y$ are in the same connected component  of $\dpi X$
if and only if there is a pair of morphisms in $\dpi X$ of the form
$$\xymatrix@R=8pt@C=7pt{c_y &u \ar[l] \\ & c_x \ar[u]}$$ or, equivalently, if and only if there is a
path $u:I\to X$ in $X$ such that $u(1)=y$ and $u(0)=x$. Then, we have an injective map
$$\pi_0X\to \pi_0\dpi X,\ \quad [x]\mapsto [c_x],$$
which is also surjective since, for any path $u$ in $X$, we have a
vertical morphism $u\gets c_{u(0)}$ in  $\dpi X$; whence the
announced bijection $\pi_0 X\cong \pi_0\dpi X$.

Next, we prove that there is an isomorphism $\pi_1(\dpi X,u)\cong\pi_1(X,u(0))$ for any given path $u:I\to X$. To do
 so, we shall use the fundamental groupoid $\Pi X$ of the space $X$; that is, the groupoid whose objects are the points
of $X$ and whose morphisms are the (relative to $\partial I$) homotopy classes $[v]$ of paths $v:I\to X$. Simply by
checking the construction, we see that an element $[(u,v),(v,u)]\in\pi_1(\dpi X,u)$ is determined by a path $v:I\to
X$, with $v(0)=u(0)$ and $v(1)=u(1)$. Moreover, for any other such $v':I\to X$, it holds that
$[(u,v),(v,u)]=[(u,v'),(v',u)]$ in $\pi_1(\dpi X,u)$ if and only if there are squares in $\dpi X$ of the
form $$\xymatrix@C=-3pt@R=-2pt
        {           u   &                       &   v\ar[ll]        \\
                    &  [\alpha]                 &               \\
                w\ar[uu] &                  & u\ar[ll]\ar[uu] &   \\}
\hspace{0.6cm}\xymatrix@C=-3pt@R=-2pt
        {           u   &                       &   v'\ar[ll]       \\
                    &  [\alpha']                &               \\
                w\ar[uu] &                  & u\ar[ll]\ar[uu]   \\}$$
or, equivalently, if and only if there are squares in $X$, $\alpha,\alpha':I^2\to X$ with boundaries as in
\begin{center}
$\xymatrix@C=0pt@R=0pt{\cdot&&\cdot\ar[ll]_{u} \ar[dd]^{w}\\&\alpha&\\ \cdot\ar[uu]^{v} \ar[rr]_{u}&&\cdot &}$
$\xymatrix@C=-1pt@R=-2pt{\cdot&&\cdot\ar[ll]_{u} \ar[dd]^{w}\\&\alpha'&\\ \cdot\ar[uu]^{v'} \ar[rr]_u&&\cdot}$
\end{center}

Since this last condition simply means that, in the fundamental groupoid $\Pi X$, the equality $[v]=[v']$ holds, we
conclude with bijections
$$\xymatrix@R=5pt@C=-4pt{\pi_1(\dpi X,u) & \hspace{-6pt}\cong & \mbox{Hom}_{\Pi X}(u(0),v(1)) & \cong & \pi_1(X,u(0))\\
 [(u,v),(v,u)]\ar@{|->}[rr] &     & [v]\ar@{|->}[rr]       &    & [u]^{\text{-}1}\!\circ[v]}$$

To see that the composite bijection $\phi:[(u,v),(v,u)]\mapsto [u]^{\text{-}1}\!\circ[v]$ is actually an isomorphism,
let $v_1,v_2:I\to X$ be paths in $X$, both from $u(0)$ to $u(1)$. Then, $[(u,v_1),(v_1,u)]\circ
[(u,v_2),(v_2,u)]=[(u,v),(v,u)]$, where $v$ occurs in a configuration such as
$$\xymatrix@C=10pt@R=10pt{u   &v_1\ar[l] &v\ar[l]\\
                             & u \ar[u]\ar@{}[ru]|{\textstyle [\gamma]}&v_2 \ar[u] \ar[l] \\
                             &       &u\ar[u]}$$
for some (any) square $\gamma:I^2\to X$ in $X$ with boundary as
below. $$\xymatrix@R=12pt@C=12pt{\cdot  & \cdot\ar[l]_{v_1}\ar[d]^u
\\ \cdot\ar[u]^{v}\ar[r]_{v_2}\ar@{}[ru]|{\textstyle \gamma}  &\cdot
}$$

It follows that, in $\Pi X$, $[v]=[v_1]\circ [u]^{\text{-}1}\circ [v_2]$ and therefore
\begin{equation*}
\begin{split}
\phi[(u,v_1),(v_1,u)]\circ \phi [(u,v_2),(v_2,u)] & =[u]^{\text{-}1}\circ[v_1]\circ[u]^{\text{-}1}\circ
[v_2]=[u]^{\text{-}1}\circ [v]\\
                                    & =\phi([(u,v_1),(v_1,u)]\circ [(u,v_2),(v_2,u)]).\\
\end{split}
\end{equation*}

Finally, we consider the case $i=2$. Let $u:I\to X$ be any path with
$u(0)=x$. Then, the mapping  $[\alpha]\mapsto
\mathrm{I}^\mathrm{h}(c_x,u)\circ_\mathrm{v}[\alpha]\circ_\mathrm{v}\mathrm{I}^\mathrm{h}(u,c_x)$,
which carries a square $[\alpha]\in\pi_2(\dpi X,u)$, to the
composite of
$$\xymatrix@R=-4pt@C=-4pt{c_x& & c_x\ar@<-0.1ex>@{-}[ll]\ar@<0.1ex>@{-}[ll]\\ & [e^\mathrm{h}] &\\
                         u\ar[uu]& &u\ar[uu]\ar@<-0.1ex>@{-}[ll]\ar@<0.1ex>@{-}[ll]\\
                          & [\alpha] & \\ u\ar@<-0.1ex>@{-}[uu]\ar@<0.1ex>@{-}[uu]& & u\ar@<-0.1ex>@{-}[ll]
                          \ar@<0.1ex>@{-}[ll]
                          \ar@<-0.1ex>@{-}[uu]\ar@<0.1ex>@{-}[uu]\\
                        & [e^\mathrm{h}]& \\c_x\ar[uu] & &c_x\ar@<-0.1ex>@{-}[ll]\ar@<0.1ex>@{-}[ll]\ar[uu]}$$
establishes an isomorphism $\pi_2(\dpi X,u)\cong \pi_2(\dpi X,c_x)$. Now, it is clear that both $\pi_2(\dpi X,c_x)$ and
$\pi_2(X,x)$ are the same abelian group of relative to $\partial I^2$ homotopy classes of maps $I^2\to X$ which are
constant $x$ along the four sides of the square.
\end{proof}

The construction of the double groupoid $\dpi X$ from a space $X$ is
easily seen to be functorial and, moreover, the isomorphisms in
Theorem \ref{ps} above  become natural. Then, we have the next corollary.
\begin{corollary}\label{we1}
A continuous map $f:X\to Y$ is a weak homotopy $2$-equivalence if and only if the induced double functor $\dpi f:\dpi X
\to \dpi Y$ is a weak equivalence.
\end{corollary}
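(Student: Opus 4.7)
The plan is to deduce the corollary formally from Theorem \ref{ps} by exploiting the naturality (in $X$) of the comparison isomorphisms $\pi_i(\dpi X, u) \cong \pi_i(X, u(0))$ for $0 \leq i \leq 2$, together with the vanishing of $\pi_i(\dpi X, u)$ for $i \geq 3$ built into the definition of the homotopy groups of a double groupoid with filling condition.

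First I would record that $\dpi$ is a functor $\mathbf{Top} \to \mathbf{DG}_{\mathrm{fc}}$. Given $f:X\to Y$, post-composition with $f$ sends paths to paths preserving endpoints, sends relative homotopy classes of maps $I^2\to X$ to relative homotopy classes of maps $I^2\to Y$, and manifestly respects the horizontal and vertical composition laws (which were defined pointwise by reparametrizations of the square). So $\dpi f : \dpi X \to \dpi Y$ is a well-defined double functor.

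Next I would verify that the three isomorphisms constructed in the proof of Theorem \ref{ps} are natural in $X$. Inspecting them: on $\pi_0$, the bijection sends $[c_x]\mapsto [x]$ (and inverts via a path $u\mapsto [u(0)]$); on $\pi_1$, the isomorphism is $[(u,v),(v,u)] \mapsto [u]^{-1}\!\circ [v]$ in the fundamental groupoid; and on $\pi_2$, it is literally the identification of relative homotopy classes of maps $I^2 \to X$ constant on $\partial I^2$. Each of these descriptions is evidently compatible with post-composition with $f$, so for every path $u$ in $X$ and each $i\in\{0,1,2\}$ there is a commutative square
\[
\xymatrix@C=40pt@R=18pt{\pi_i(\dpi X, u) \ar[r]^-{\pi_i(\dpi f)} \ar[d]_{\cong} & \pi_i(\dpi Y, fu) \ar[d]^{\cong} \\ \pi_i(X, u(0)) \ar[r]_-{\pi_i(f)} & \pi_i(Y, f(u(0))).}
\]

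With naturality in hand, the corollary is immediate. By definition, $\dpi f$ is a weak equivalence iff $\pi_i(\dpi f)$ is an isomorphism for every $i\geq 0$ and every object (path) $u$ of $\dpi X$. For $i\geq 3$ both groups are zero and there is nothing to check. For $i\in\{0,1,2\}$, the square above shows $\pi_i(\dpi f)$ at the base point $u$ is a bijection iff $\pi_i(f)$ is a bijection at the base point $u(0)$. Since every point $x\in X$ is $u(0)$ for some path (take $u=c_x$), ranging over all paths $u$ of $\dpi X$ is equivalent to ranging over all base points of $X$. Hence $\dpi f$ is a weak equivalence of double groupoids iff $\pi_i f$ is a bijection for all base points and all $i\leq 2$, i.e.\ iff $f$ is a weak homotopy $2$-equivalence.

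There is no substantial obstacle; the only content is confirming the naturality of the three comparison maps, which is a direct inspection rather than an argument. Everything else is a bookkeeping exercise using the shape of the definitions.
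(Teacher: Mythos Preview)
Your proof is correct and follows essentially the same approach as the paper: the paper simply remarks that $\dpi$ is functorial and that the isomorphisms in Theorem~\ref{ps} are natural, then states the corollary without further argument. You have spelled out precisely this reasoning in detail, including the observation that every base point of $X$ arises as $u(0)$ for some path $u$ and that the case $i\geq 3$ is vacuous.
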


\section{The geometric realization of a double groupoid.}

Hereafter, we shall regard each ordered set $[n]$ as the category with exactly one arrow
$j\to i$ when $0\leq i\leq j\leq n$. Then, a non-decreasing map $[n]\to [m]$ is the same
as a functor.

The geometric realization, or classifying space, of a category
$\mathcal{C}$, \cite{qui2}, is $|\mathcal{C}|\!:=\!|\n \mathcal{C}|$,
the geometric realization of its nerve \cite{grothendieck}
$$
\n\mathcal{C}:\Delta^{\!o}\to \mathbf{Set},\hspace{0.5cm}[n]\mapsto
\mathrm{Func}([n],\mathcal{C}),
$$
that is, the simplicial set whose $n$-simplices are the functors $F:[n]\to\mathcal{C}$,
or tuples of arrows in $\mathcal{C}$
$$
F=\big(F_i \stackrel{\textstyle F_{i,j}}{\longleftarrow} F_j\big)_{^{0\leq i \leq j\leq
n}}
$$
such that $F_{i,j}\circ F_{j,k}=F_{i,k}$ and $F_{i,i}=\mathrm{I}_{F_i}$. If $\mathcal{G}$
is a double category, then its geometric realization, $|\mathcal{G}|$, is defined by
first taking the double nerve $\dn\mathcal{G}$, which is a bisimplicial set, and then
realizing to obtain a space
$$
|\mathcal{G}|:=|\dn\mathcal{G}|.
$$

To have a manageable description handle description for the bisimplices in
$\dn\mathcal{G}$, we can use the following construction: If
$\mathcal{A}$ and $\mathcal{B}$ are categories, let
$\mathcal{A}\otimes\mathcal{B}$ be the double category whose objects
are pairs $(a,b)$, where $a$ is an object of $\mathcal{A}$ and $b$
is an object of $\mathcal{B}$; horizontal morphisms are pairs
$(f,b):(a,b)\to(c,b)$, with $f:a\to c$ a morphism in $\mathcal{A}$;
vertical morphisms are pairs $(a,u):(a,b)\to (a,d)$ with $u:b\to d$
in $\mathcal{B}$; and a square in $\mathcal{A}\otimes\mathcal{B}$ is
given by each morphism $(f,u):(a,b)\to(c,d)$ in the product category
$\mathcal{A}\times\mathcal{B}$, by stating its boundary as in
$$
\xymatrix{(c,d)\ar @{}[rd]|{\textstyle (f,u)} & (a,d)\ar[l]_-{\textstyle (f,d)} \\
                                        (c,b)\ar[u]^-{\textstyle (c,u)} & (a,b)\ar[l]^-{\textstyle (f,b)}\ar[u]_-{\textstyle (a,u)}}
$$
Compositions in $\mathcal{A}\otimes\mathcal{B}$ are defined in the evident way.

Then, the double nerve $\dn\mathcal{G}$ of a double category $\mathcal{G}$ is the
bisimplicial set
$$
\dn\mathcal{G}:\Delta^{\!o}\!\times\!\Delta^{\!o}\to \mathbf{Set},\hspace{0.4cm}
([p],[q])\mapsto \mathrm{DFunc}([p]\otimes[q],\mathcal{G}),
$$
whose $(p,q)$-bisimplices are the double functors
$F:[p]\otimes[q]\to\mathcal{G}$ or configurations of squares in
$\mathcal{G}$ of the form
$$
\xymatrix{\ar@{-}@/_1pc/@<1.5pc>[d]&F_i^r\ar @{}[rd]|{\textstyle F_{i,j}^{r,s}}  &  F_j^r \ar[l]_-{\textstyle F_{i,j}^r}& \ar@{-}@/^0.9pc/@<-1.5pc>[d]^(0.7){\hspace{-5pt}\scriptsize \begin{array}{l}0\leq i\leq j\leq p\\[-1pt] 0\leq r\leq s\leq q\end{array},}\\ &
                                        F_i^s\ar[u]^-{\textstyle F_i^{r,s}}  & F_j^s \ar[l]^-{\textstyle  F_{i,j}^s} \ar[u]_-{\textstyle  F_j^{r,s}}&}
$$
 such that $F_{i,j}^{r,s}\circ_{\mathrm{h}} F^{r,s}_{j,k}=F^{r,s}_{i,k}$,
 $F^{r,s}_{i,j}\circ_{\mathrm{v}}F^{s,t}_{i,j}=F_{i,j}^{r,t}$, $F^{r,s}_{i,i}=\mathrm{I}^{\mathrm{h}}F_i^{r,s}$, and $F^{r,r}_{i,j}=\mathrm{I}^{\mathrm{v}}F_{i,j}^r$.

But note that the double category $[p]\otimes[q]$ is free on the bigraph
$$
\xymatrix@C=12pt@R=18pt{\ar@{-}@/_0.7pc/@<1.5pc>[d]&\scriptstyle{ (j-1,r-1)} & \scriptstyle{(j,r-1)}\ar[l] &\ar@{-}@/^0.7pc/@<-1.5pc>[d]^(0.7){\hspace{-5pt}\scriptsize \begin{array}{l}0\leq i\leq j\leq p\\[-1pt] 0\leq r\leq s\leq q\end{array},}\\ &\scriptstyle{(j-1,r)}\ar[u] & \scriptstyle{(j,r)}\ar[l]\ar[u]&}
$$
and therefore,  giving a double functor $F:[p]\otimes[q]\to \mathcal{G}$ as above is
equivalent to specifying the $p\times q$ configuration of squares in $\mathcal{G}$
$$
\xymatrix{\ar@{-}@/_1pc/@<1.5pc>[d]&F_{j\!-\!1}^{r\!-\!1}\ar @{}[rd]|{\textstyle F_{j\!-\!1,j}^{r\!-\!1,r}}  &  F_j^{r\!-\!1} \ar[l]_-{\textstyle F_{j\!-\!1,j}^{r\!-\!1}}& \ar@{-}@/^0.9pc/@<-1.5pc>[d]^(0.7){\hspace{-5pt}\scriptsize \begin{array}{l}1\leq j\leq p\\[-1pt] 1\leq r\leq q\end{array}.}\\ &
                                        F^r_{j\!-\!1}\ar[u]^-{\textstyle F_{j\!-\!1}^{r\!-\!1,r}}  & F_j^r \ar[l]^-{\textstyle  F_{
                                        j\!-\!1,j}^r} \ar[u]_-{\textstyle  F_j^{r\!-\!1,r}}&}
$$

Thus, each vertical simplicial set $\dn\mathcal{G}_{p,*}$ is the nerve of the
``vertical'' category having as objects strings of $p$-composable horizontal morphisms
$a_0\leftarrow a_1\leftarrow \cdots\leftarrow a_p$, whose arrows consist of $p$
horizontally composable squares as in
$$
\xymatrix@C=14pt@R=14pt{b_0 & b_1\ar[l] & \cdot\ar[l] & \cdot \ar @{.}[l] & b_p\ar[l] \\
                                        a_0\ar[u] & a_1\ar[u]\ar[l] & \cdot\ar[u]\ar[l] & \cdot\ar[u] \ar @{.}[l] & a_p\ar[u]\ar[l]}
$$
And, similarly, each horizontal simplicial set $\dn\mathcal{G}_{*,q}$ is the nerve of the
``horizontal'' category whose objects are the length $q$ sequences of composable vertical
morphisms of $\mathcal{G}$, with length $q$ sequences of vertically composable squares as
morphisms between them.

For instance, if $\mathcal{A}$ and $\mathcal{B}$ are categories, then
$\dn(\mathcal{A}\otimes\mathcal{B})=\n\mathcal{A}\otimes\n\mathcal{B}$. In particular,
$$
\dn([p]\otimes[q])=\Delta[p]\otimes\Delta[q]=\Delta[p,q],
$$
is the standard $(p,q)$-bisimplex.

It is a well-known fact that the nerve $\n\mathcal{C}$ of a category $\mathcal{C}$
satisfies the Kan extension condition if and only if $\mathcal{C}$ is a groupoid, and, in
such a case, every $(k,n)$-horn ${\Lambda^k[n]\to\n\mathcal{C}}$, for $n\geq 2$, has a
unique extension to an $n$-simplex of $\n\mathcal{C}$
$$
\xymatrix@C=14pt@R=16pt{\Lambda^k[n]\ar[r]\ar @{^{(}->}[d] & \n\mathcal{C} \\
\Delta[n]\ar @{.>}[ru]_{\exists !}}
$$
(see \cite[Propositions 2.2.3 and 2.2.4]{illusie}, for example). For double categories
$\mathcal{G}$, we have the following:

\begin{theorem}\label{thfc} Let $\mathcal{G}$ be a double category. The following statements are equivalent:
\begin{enumerate}
\item[(i)] $\mathcal{G}$ is a double groupoid satisfying the filling condition.
\item[(ii)] The bisimplicial set $\dn\mathcal{G}$ satisfies the extension condition.
\item[(iii)] The simplicial set $\dia\dn\mathcal{G}$ is a Kan complex.
\end{enumerate}
\end{theorem}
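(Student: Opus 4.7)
My plan is to prove the equivalences in two stages: $(i) \Leftrightarrow (ii)$ by a direct algebraic translation, and $(ii) \Leftrightarrow (iii)$ by a bisimplicial-to-diagonal argument. The first equivalence carries the essential content.

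For $(i) \Rightarrow (ii)$ I would treat the three types of horns separately. Each simplicial set $\dn\mathcal{G}_{p,*}$ is the nerve of the vertical category whose arrows are $p$ horizontally composable squares composed vertically; when $\mathcal{G}$ is a double groupoid this category is itself a groupoid, so by the classical fact that the nerve of a groupoid is Kan, all pure vertical horns $\Delta[p]\otimes\Lambda^{l}[q]$ extend, and the symmetric argument handles pure horizontal horns. For the mixed horn $\Lambda^{k,l}[p,q]$ the crucial case is $p=q=1$: a $\Lambda^{k,l}[1,1]$-horn datum is precisely a matching pair consisting of a horizontal morphism and a vertical morphism sharing a vertex, and extension to a $(1,1)$-bisimplex is exactly a square realizing those as two of its sides. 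The four choices $(k,l)\in\{0,1\}^{2}$ correspond to the four equivalent filling problems enumerated in Lemma \ref{fc}, so the filling condition supplies the extension. For general $(p,q)$ I would use the $(1,1)$-case iteratively to fill the individual squares constituting the missing $k$-th horizontal face of the horn, and then invoke the already-established column Kan property to consolidate them into a $(p,q)$-bisimplex matching all given faces.

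For $(ii) \Rightarrow (i)$, the Kan property on each $\dn\mathcal{G}_{p,*}$ and $\dn\mathcal{G}_{*,q}$, combined with the classical characterization of groupoids via their nerves, forces the four constituent categories of $\mathcal{G}$ to be groupoids. The filling condition is then read off directly from the $\Lambda^{1,0}[1,1]$-horn extension, which is exactly the original form of the filling condition. For $(ii) \Leftrightarrow (iii)$, a simplicial horn $\Lambda^{k}[n] \to \dia\dn\mathcal{G}$ corresponds to a family of $(n-1,n-1)$-bisimplices whose diagonal face compatibilities $d_i^{\mathrm{h}}d_i^{\mathrm{v}}x_j = d_{j-1}^{\mathrm{h}}d_{j-1}^{\mathrm{v}}x_i$ supply partial bisimplicial data; $(ii)\Rightarrow(iii)$ proceeds by iteratively completing this data using the three types of bisimplicial extensions, climbing through intermediate bidegrees, while $(iii)\Rightarrow(ii)$ lifts a bisimplicial horn to a simplicial horn in $\dia\dn\mathcal{G}$ by thickening with appropriate degeneracies and extracts the required bisimplicial filling from the produced $(n,n)$-bisimplex.

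The main obstacle I expect lies in the combinatorial bookkeeping for the mixed-horn case in $(i) \Rightarrow (ii)$ at general $(p,q)$: one must verify that the filling squares produced one by one by the filling condition glue together coherently as a bisimplicial face. The groupoid structure is precisely what makes this consolidation possible, since any two candidate fillings of a square with two prescribed sides differ by an invertible square, and this discrepancy can be propagated and corrected along the pattern of horizontal and vertical compositions.
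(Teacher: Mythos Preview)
Your treatment of $(i)\Leftrightarrow(ii)$ is sound in outline, though more laborious than necessary for mixed horns at general $(p,q)$. The paper observes that once $p\geq 2$ (or $q\geq 2$), the restriction of a $\Lambda^{k,l}[p,q]$-horn to the subcomplex $\Lambda^{k}[p]\otimes\Delta[q]$ already determines the bisimplex \emph{uniquely}, because $\dn\mathcal{G}_{*,q}$ is the nerve of a groupoid and groupoid nerves have unique horn fillers in dimension $\geq 2$. The remaining vertical faces then match automatically by the same uniqueness. This avoids your iterated square-filling and the coherence bookkeeping you flagged as the main obstacle.

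The real gap is in your $(ii)\Leftrightarrow(iii)$. Your sketch of $(iii)\Rightarrow(ii)$ --- lifting a bisimplicial horn to a diagonal horn by ``thickening with degeneracies'' and then extracting the bisimplicial filler --- does not work as stated. Passing from the Kan property of $\dia K$ back to the Kan property of each $K_{p,*}$ and $K_{*,q}$ (let alone to the mixed extension condition) is a genuine theorem, not a degeneracy trick; a diagonal $n$-simplex is an $(n,n)$-bisimplex, and there is no natural way to embed a $(p,q)$-horn with $p\neq q$ into a diagonal horn so that the filler restricts correctly. The paper does not attempt $(ii)\Leftrightarrow(iii)$ directly at all: it proves $(i)\Rightarrow(iii)$ by an explicit combinatorial argument (Lemma~\ref{lus} handles $n\geq 3$ by showing three non-$k$ faces already determine the double functor, then $n=2$ is done by hand using the filling condition), and it proves $(iii)\Rightarrow(i)$ by invoking \cite[Theorem 8]{c-r2} to get that all rows and columns of $\dn\mathcal{G}$ are Kan, hence the constituent categories are groupoids, and then reads the filling condition off a particular $\Lambda^{1}[2]$-horn in the diagonal. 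You should either route through $(i)$ as the paper does, or be prepared to supply a self-contained proof of the diagonal-to-columns implication, which is substantially harder than what you have sketched.
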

\begin{proof} {(i)$\Rightarrow$ (ii)} Since $\mathcal{G}$ is a double groupoid, all simplicial sets $\dn\mathcal{G}_{p,\ast}$ and $\dn\mathcal{G}_{\ast,q}$ are nerves of groupoids. Therefore, every extension problem of the form
$$
\begin{matrix}
\xymatrix@C=14pt@R=16pt{\Delta[p]\otimes\Lambda^{\!l}[q]\ar[r]\ar
@{^{(}->}[d] & \dn\mathcal{G}\\ \Delta[p,q]\ar @{.>}[ru]_{\exists
!}}
\end{matrix} \mbox{ or } \begin{array}{c}
\xymatrix@C=14pt@R=16pt{\Lambda^{\!k}[p]\otimes\Delta[q]\ar[r]\ar
@{^{(}->}[d] & \dn\mathcal{G}\\ \Delta[p,q]\ar @{.>}[ru]_{\exists
!}}
\end{array}
$$
has a solution and it is unique. Suppose then an extension problem of the form
\begin{equation}\label{eq}
\xymatrix@C=14pt@R=14pt{\Lambda^{\!k,l}[p,q]\ar[r]\ar @{^{(}->}[d] &\dn\mathcal{G} \\
\Delta[p,q]\ar @{.>}[ru]}
\end{equation}
If $p\geq 2$, then the restricted map
$\Lambda^{\!k}[p]\otimes\Delta[q]\hookrightarrow
\Lambda^{\!k,l}[p,q]\to \dn\mathcal{G}$ has a unique extension to a
bisimplex $\Delta[p,q]\to\dn\mathcal{G}$, which is a solution to
(\ref{eq}) (which in fact has a unique solution if $p\geq 2$ or
$q\geq 2$). Hence, we reduce the proof to the case in which $p=1=q$,
with the four possibilities $k=0,1$ and $l=0,1$. But any such
extension problem has a solution  thanks to Lemma \ref{fc}. For
example, let us discuss the case $k=0=l$: A bisimplicial map
$\xymatrix@C=40pt{\Lambda^{\!0,0}[1,1]\ar[r]^-{(-,w;-,g)} &
\dn\mathcal{G}}$ consists of two bisimplicial maps $w:\Delta[0,1]\to
\dn\mathcal{G}$ and $g:\Delta[1,0]\to \dn\mathcal{G}$, such that
$wd^1_v=gd^1_h$. That is, a vertical morphism $w$ of $\mathcal{G}$
and a horizontal morphism $g$ of $\mathcal{G}$, such that both have
the same target. By Lemma \ref{fc}, there is a square $\alpha$ in
$\mathcal{G}$ of the form
$$
\xymatrix@C=10pt@R=10pt{\cdot \ar @{}[dr]|{\textstyle \alpha }&
\cdot \ar[l]_-g \\ \cdot \ar[u]^-w & \cdot \ar @{.>}[l] \ar
@{.>}[u]}
$$
which defines a bisimplicial map $F:\Delta[1,1]\to\dn\mathcal{G}$ such that
$F_{0,1}^{0,1}=\alpha$. Then $Fd_h^1=w$, $Fd^1_v=g$, and the diagram below commutes, as
required.
$$
\xymatrix@C=35pt@R=15pt{\Lambda^{\!0,0}[1,1]\ar @{^{(}->}[d]\ar[r]^-{(-,w;-,g)} & \dn\mathcal{G} \\
                                \Delta[1,1]\ar[ru]_-F}$$

{(ii) $\Rightarrow$ (i)} The simplicial sets
$\dn\mathcal{G}_{0,\ast},\,\dn\mathcal{G}_{*,0},\,\dn\mathcal{G}_{1,\ast}$,
and $\dn\mathcal{G}_{\ast,1}$ are respectively the nerves of the
four component categories of the double category $\mathcal{G}$.
Since all these simplicial sets satisfy the Kan extension condition,
it follows that the four category structures involved are groupoids;
that is, $\mathcal{G}$ is a double groupoid. Furthermore, for any
given filling problem in $\mathcal{G}$,
$$\xymatrix@C=-1pt@R=-1pt{\cdot && \cdot\ar[ll]_g\\ &\scriptstyle{\exists ?}&\\
\cdot \ar@{.>}[uu] & &  \cdot \ar@{.>}[ll]\ar[uu]_{u}}$$ we can
solve the extension problem
$$
\xymatrix@C=35pt@R=15pt{\Lambda^{\!1,0}[1,1]\ar @{^{(}->}[d]\ar[r]^-{(u,-;-,g)} & \dn\mathcal{G} \\
                                \Delta[1,1]\ar@{.>}[ru]_-F}$$
and the square $F^{0,1}_{0,1}$ has $u$ as horizontal source and $g$ as vertical target.
Thus $\mathcal{G}$ satisfies the filling condition.

{(i) $\Rightarrow$ (iii)} The higher dimensional part of the proof is in the following
lemma, that we establish for  future reference.
\begin{lemma}\label{lus} If $\mathcal{G}$ is any double groupoid and $n$ is any integer such that
$n\geq 3$, then every extension problem
$$
\xymatrix@C=20pt@R=15pt{\Lambda^k[n]\ar[r]\ar @{^{(}->}[d] & \dia\dn\mathcal{G} \\
\Delta[n]\ar @{.>}[ru]}
$$
has a solution and it is unique.
\end{lemma}
\begin{proof} Let $F=(F_{i,j}^{r,s}):[n]\otimes[n]\to \mathcal{G}$ denote the double functor we are looking for solving the given extension problem. Recall that to give  such an $F$ is equivalent to specifying the $n\times n$ configuration of squares
 $$
\xymatrix{\ar@{-}@/_1pc/@<1.5pc>[d]&F_{j\!-\!1}^{r\!-\!1}\ar @{}[rd]|{\textstyle F_{j\!-\!1,j}^{r\!-\!1,r}}  &  F_j^{r\!-\!1} \ar[l]_-{\textstyle F_{j\!-\!1,j}^{r\!-\!1}}& \ar@{-}@/^0.9pc/@<-1.5pc>[d]^(0.7){\hspace{-5pt}\scriptsize \begin{array}{l}1\leq j\leq n\\[-1pt] 1\leq r\leq n\end{array}.}\\ &
                                        F^r_{j\!-\!1}\ar[u]^-{\textstyle F_{j\!-\!1}^{r\!-\!1,r}}  & F_j^r
                                        \ar[l]^-{\textstyle  F_{
                                        j\!-\!1,j}^r} \ar[u]_-{\textstyle  F_j^{r\!-\!1,r}}&}
$$
We claim that $F$ exists and, moreover, that it is completely showed  from any three of
its (known) faces $[n\!-\!1]\otimes[n\!-\!1]\stackrel{d^m\otimes d^m}{\longrightarrow}
[n]\otimes[n]\stackrel{F}{\longrightarrow}\mathcal{G},\,m\neq k$; therefore,  by the input data
$\Lambda^{\!k}[n]\to \dia\dn\mathcal{G}$. In effect, since each $m^{\text{ th}}$-face
consists of all squares $F_{i,j}^{r,s}$ such that $m\notin\{i,j,r,s\}$, once we have
selected any three integers $m,\,p,\,q$ with $m<p<q$ and $k\notin\{m,p,q\}$, we know
explicitly all squares $F^{r,s}_{i,j}$ except those in which $m,\,p$ and $q$ appear in
the labels, that is:  $F_{q,j}^{m,p}$, $F^{m,q}_{p,j}$, $F^{j,p}_{m,q}$, and so on. In the case
where $k\geq 3$, if we take $\{m,p,q\}=\{0,1,2\}$ then we have given all squares $F_{i,j}^{r,s}$, except those with $\{0,1,2\}\subseteq\{r,s,i,j\}$. In particular, we have all
$F^{r,r+1}_{i,i+1}$, except four of them, namely,
$F^{0,1}_{2,3},\,F^{0,1}_{1,2},\,F^{1,2}_{0,1}$, and $F^{2,3}_{0,1}$, which, however, are
uniquely determined by the equations
$$
F^{0,1}_{2,3}\circ_{\mathrm{v}} F^{1,2}_{2,3}=F^{0,2}_{2,3},\,\,
F^{2,3}_{0,1}\circ_{\mathrm{h}}F^{2,3}_{1,2}=F^{2,3}_{0,2},\,\,
F^{0,1}_{1,2}\circ_{\mathrm{h}}F^{0,1}_{2,3}=F^{0,1}_{1,3},\,\,
F^{1,2}_{0,1}\circ_{\mathrm{v}}F^{2,3}_{0,1}=F^{1,3}_{0,1},
$$
that is,
$F^{0,1}_{2,3}=F^{0,2}_{2,3}\circ_{\mathrm{v}}(F^{1,2}_{2,3})^{\text{-}1_{\!\mathrm{v}}}$,
and so on. The other possibilities for $k$ are discussed in a similar way:
If $k=2$, then we select $\{m,p,q\}=\{0,1,n\}$ and determine
 $F$ completely by taking into account the two equations $
F^{0,1}_{n\!-\!1,n}\circ_{\mathrm{v}}F^{1,2}_{n\!-\!1,n}=F^{0,2}_{n\!-\!1,n}$,
$
F^{n\!-\!1,n}_{0,1}\circ_{\mathrm{h}}F^{n\!-\!1,n}_{1,2}=F^{n\!-\!1,n}_{0,2}.
$

If $k=1$, then we take $\{m,p,q\}=\{0,2,3\}$ and find the unknown
squares $F^{2,3}_{0,1}$ and $F^{0,1}_{2,3}$ by the equations
$F^{1,2}_{0,1}\circ_{\mathrm{v}}F^{2,3}_{0,1}=F^{1,3}_{0,1}$ and
$F^{0,1}_{1,2}\circ_{\mathrm{h}}F^{0,1}_{2,3}=F^{0,1}_{1,3}$,
respectively.

Finally, in the case where $k=0$, we take
$\{m,p,q\}=\{n\!-\!2,n\!-\!1,n\}$ and we determine the non-given
four squares of the family $(F^{r,r+1}_{i,i+1})$, that is,
$F^{n\!-\!2,n\!-\!1}_{n\!-\!1,n},\,F^{n\!-\!1,n}_{n\!-\!2,n\!-\!1},\,
F^{n\!-\!3,n\!-\!2}_{n\!-\!1,n}$, and
$F^{n\!-\!1,n}_{n\!-\!3,n\!-\!2}$ by means of the four equations
$F^{n\!-\!3,n\!-\!2}_{n\!-\!3,n\!-\!1}\circ_{\mathrm{h}}F^{n\!-\!3,n\!-\!2}_{n\!-\!1,n}=
F^{n\!-\!3,n\!-\!2}_{n\!-\!3,n}$,
$F^{n\!-\!3,n\!-\!1}_{n\!-\!3,n\!-\!2}\circ_{\mathrm{v}}
F^{n\!-\!1,n}_{n\!-\!3,n\!-\!2}=F^{n\!-\!3,n}_{n\!-\!3,n\!-\!2}$,
$F^{n\!-\!3,n\!-\!2}_{n\!-\!1,n}\circ_{\mathrm{v}}
F^{n\!-\!2,n\!-\!1}_{n\!-\!1,n}=F^{n\!-\!3,n\!-\!1}_{n\!-\!1,n}$,
and
$F^{n\!-\!1,n}_{n\!-\!3,n\!-\!2}\circ_{\mathrm{h}}F^{n\!-\!1,n}_{n\!-\!2
,n\!-\!1}=F^{n\!-\!1,n}_{n\!-\!3,n\!-\!1}$.
This completes the proof of the lemma.
\end{proof}
We now return to the proof of (i) $\Rightarrow$ (iii) in Theorem \ref{thfc}. After Lemma
\ref{lus} above, it remains to prove that every extension problem
$$
\xymatrix@C=14pt@R=16pt{\Lambda^{\!k}[2]\ar[r]\ar
@{^{(}->}[d] & \dia\dn\mathcal{C} \\ \Delta[2]\ar
@{.>}[ru]_{\exists ?}}
$$
for $k=0,1,2$, has a solution. In the case where
$k=0$, the data for a simplicial map
$(-,\tau,\sigma):\Lambda^0[2]\to\dia\dn\mathcal{G}$
consists of a couple of squares in $\mathcal{G}$
of the form $$
\xymatrix@R=10pt@C=10pt{\scriptstyle{a}\ar @{}[rd]|{\textstyle \sigma} & \cdot \ar[l] \\
\cdot \ar[u] & \cdot \ar[u]\ar[l]}\hspace{1cm}
\xymatrix@R=10pt@C=10pt{\scriptstyle{a}\ar @{}[rd]|{\textstyle \tau}
& \cdot \ar[l] \\ \cdot \ar[u] & \cdot \ar[u]\ar[l]} $$ and an
extension solution
$\xymatrix@C=18pt{\Delta[2]\ar@{.>}[r]&\dia\dn\mathcal{G}}$ amounts to
a diagram of squares as in
$$
\xymatrix@C=12pt@R=12pt{\scriptstyle{a}\ar @{}[rd]|{\textstyle
\sigma} & \cdot\ar[l]\ar @{}[rd]|{\textstyle x}
& \cdot \ar @{.>}[l] \\
\cdot\ar[u]\ar @{}[rd]|{\textstyle y} &
\cdot\ar[l]\ar[u]\ar @{}[rd]|{\textstyle z}
& \cdot  \ar @{.>}[l]\ar @{.>}[u]\\
\cdot\ar @{.>}[u] & \cdot \ar @{.>}[l]\ar
@{.>}[u] & \cdot \ar @{.>}[l]\ar @{.>}[u]}
$$
such that $(\sigma
\circ_{\mathrm{h}}x)\circ_{\mathrm{v}}(y\circ_{\mathrm{h}}z)=\tau$.
To see that such squares $x$, $y$, and $z$ exist, we form the
configuration (actually, a 3-simplex of $\dia\dn\mathcal{G}$)
$$
\xymatrix@C=16pt@R=16pt{\scriptstyle{a} \ar @{}[rd]|{\textstyle
\sigma} & \cdot \ar[l]\ar
@{}[rd]|{\textstyle{\sigma}^{\text{-}1_{\mathrm{h}}}} & \cdot
\ar[l]
\ar @{}[rd]|{\textstyle{\alpha}^{\text{-}1_{\!\mathrm{v}}}} & \cdot \ar[l] \\
\cdot \ar[u]\ar
@{}[rd]|{\textstyle{\sigma}^{\text{-}1_{\!\mathrm{v}}}} & \cdot
\ar[u]\ar[l]\ar @{}[rd]|{\textstyle{\sigma}^{\text{-}1}} & \cdot
\ar[l]\ar[u]\ar @{}[rd]|{\textstyle \alpha} & \cdot \ar @{.>}[l]\ar @{.>}[u] \\
\cdot \ar[u]\ar
@{}[rd]|{\textstyle{\beta}^{\text{-}1_{\mathrm{h}}}}
 & \cdot \ar[u]\ar[l]\ar
@{}[rd]|{\textstyle \beta} &
\scriptstyle{a}\ar[l] \ar[u]
\ar @{}[rd]|{\textstyle \tau} & \cdot \ar[l]\ar @{.>}[u] \\
\cdot \ar@{.>}[u] & \cdot \ar @{.>}[l]\ar
@{.>}[u] & \cdot \ar @{.>}[l]\ar[u] & \cdot
\ar[u]\ar[l]}
$$
where $\alpha$ and $\beta$ are any found thanks  $\mathcal{G}$
satisfies the filling condition. Then, we take
$x=\sigma^{\text{-}1_{\mathrm{h}}}\circ_{\mathrm{h}}\alpha^{\text{-}
1_{\!\mathrm{v}}}$,
$y=\sigma^{\text{-}1_{\!\mathrm{v}}}\circ_{\mathrm{v}}
\beta^{\text{-}1_{\mathrm{h}}}$,
and
$z=(\sigma^{\text{-1}}\circ_{\mathrm{h}}\alpha)\circ_{\mathrm{v}}(\beta\circ_{\mathrm{h}}\tau)$.

The case in which $k=2$ is dual of the case $k=0$ above, and the
case when $k=1$ is easier: A simplicial map
$(\sigma,-,\tau):\Lambda^{\!1}[2]\to\dia\dn\mathcal{G}$ amounts to a
couple of squares in $\mathcal{G}$ of the form
$$
\xymatrix@R=10pt@C=10pt{\scriptstyle{a}\ar @{}[rd]|{\textstyle \sigma} & \cdot \ar[l] \\
\cdot \ar[u] & \cdot \ar[u]\ar[l]}\hspace{1cm}
\xymatrix@R=10pt@C=10pt{\cdot\ar @{}[rd]|{\textstyle \tau} & \cdot
\ar[l]
\\ \cdot \ar[u] &\scriptstyle{a} \ar[u]\ar[l]} $$
and an extension solution
$\xymatrix@C=18pt{\Delta[2]\ar@{.>}[r]&\dia\dn\mathcal{G}}$ is given
by any configuration of squares in $\mathcal{G}$ of the form
$$
\xymatrix@C=12pt@R=12pt{ \cdot \ar@{}[rd]|{\textstyle \tau} & \cdot\ar[l]\ar @{}[rd]|{\textstyle x}
 & \cdot\ar @{.>}[l] \\
\cdot\ar[u]\ar @{}[rd]|{\textstyle y} &
\scriptstyle{a}\ar[u]\ar[l]\ar @{}[rd]|{\textstyle \sigma} &
\cdot \ar[l]\ar @{.>}[u] \\
\cdot\ar @{.>}[u] & \cdot \ar @{.>}[l]\ar[u] & \cdot \ar[u]\ar[l]}
$$
Since $\mathcal{G}$ satisfies the filling condition (recall Lemma
\ref{fc}), it is clear that filling squares $x$ and $y$ as above
exist, and therefore the required extension map exists.

(iii) $\Rightarrow$ (i)  By \cite[Theorem 8]{c-r2}, all simplicial
sets $\dn\mathcal{G}_{p,\ast}$ and $\dn\mathcal{G}_{\ast,q}$ satisfy
the Kan extension condition. In particular, the nerves of the four
component categories of the double category $\mathcal{G}$, that is,
the simplicial sets
$\dn\mathcal{G}_{0,\ast}$, $\dn\mathcal{G}_{*,0}$,$\dn\mathcal{G}_{1,\ast}$,
and $\dn\mathcal{G}_{\ast,1}$ are  all  Kan complexes. By  \cite[Propositions 2.2.3 and
2.2.4]{illusie}, it follows that the four category structures
involved are groupoids, and so $\mathcal{G}$ is a double groupoid.

To see that $\mathcal{G}$ satisfies the filling condition, suppose that a
filling problem $$\xymatrix@C=-1pt@R=-1pt{\cdot && \cdot\ar[ll]_g\\
&\scriptstyle{\exists ?}&\\ \cdot \ar@{.>}[uu] & &  \cdot
\ar@{.>}[ll]\ar[uu]_{u}}$$ is given. Since the simplicial map
$\xymatrix@C=40pt{
\Lambda^{\!1}[2]\ar[r]^-{(\mathrm{I}\mathrm{^h}u,-,\mathrm{I}\mathrm{^v}\!g)}
& \dia\dn\mathcal{C}}$ has an extension to a 2-simplex
$\xymatrix@C=18pt{\Delta[2]\ar@{.>}[r]&\dia\dn\mathcal{G}}$, we
conclude the existence of a diagram of squares in $\mathcal{G}$ of
the form
$$
\xymatrix@C=16pt@R=16pt{ \cdot
\ar@{}[rd]|{{\textstyle\mathrm{I}^{\mathrm{v}}\!g}} & \cdot
\ar[l]_-g & \cdot\ar @{.>}[l] \\
\cdot \ar@<-0.1ex>@{-}[u]\ar@<0.1ex>@{-}[u]&
 \cdot\ar[l]\ar@{}@<-2pt>[l]^-g\ar@<-0.1ex>@{-}[u]\ar@<0.1ex>@{-}[u]\ar@{}[rd]|{\textstyle \mathrm{I}^{\mathrm{h}}u} &
  \cdot\ar @{.>}[u] \ar@<-0.1ex>@{-}[l]\ar@<0.1ex>@{-}[l]\\\ar@{}[ru]|(0.45){\textstyle \alpha}
 \cdot \ar @{.>}[u] & \cdot \ar[u]\ar@{}@<-3pt>[u]^-u\ar @{.>}[l] & \cdot \ar[u]_-u \ar@<-0.1ex>@{-}[l]\ar@<0.1ex>@{-}[l]}
$$
and then, particularly, the existence of a square $\alpha$ as is
required. \end{proof}

We now state our main result in this section.

\begin{theorem} \label{th0}Let $\mathcal{G}$ be a double groupoid satisfying the filling condition.
Then, for each object $a$ of $\mathcal{G}$,  there are natural
isomorphisms
\begin{equation}\label{i-i}
\pi_i(\mathcal{G},a)\cong \pi_i(|\mathcal{G}|,|a|),\,\, i\geq 0.
\end{equation}
\end{theorem}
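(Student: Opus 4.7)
The plan is to factor the comparison through the Kan complex $\dia\dn\mathcal{G}$. By Theorem \ref{thfc}, $\dia\dn\mathcal{G}$ is a Kan complex, so Facts \ref{f1}(2) already supplies natural isomorphisms $\pi_i(\dia\dn\mathcal{G},a)\cong\pi_i(|\dia\dn\mathcal{G}|,|a|)=\pi_i(|\mathcal{G}|,|a|)$ for every $i\geq 0$. The theorem then reduces to constructing natural isomorphisms $\pi_i(\mathcal{G},a)\cong\pi_i(\dia\dn\mathcal{G},a)$, and this will be done by direct inspection of the bisimplices of $\dn\mathcal{G}$ in low bidegree: a $(1,1)$-bisimplex is a single square of $\mathcal{G}$, a $(2,2)$-bisimplex is a $2\times 2$ array of squares, and the diagonal face and degeneracy operators read exactly the expected horizontal and vertical source, target, and identity maps.

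For $i\geq 3$ both groups vanish. The left-hand side is zero by definition, and the right-hand side is zero because, by Lemma \ref{lus}, every horn $\Lambda^{\!k}[n]\to\dia\dn\mathcal{G}$ with $n\geq 3$ has a unique filler; the uniqueness forces every $n$-simplex with $a$-boundary ($n\geq 3$) to be homotopic to the degenerate one, by the standard Kan-complex argument in the spirit of Fact \ref{p2}.

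For $i=0,1,2$ the isomorphisms are explicit. When $i=0$, two objects $a,b$ are in the same component of $\dia\dn\mathcal{G}$ iff there is a square with $(0,0)$- and $(1,1)$-corners $a$ and $b$, which via the filling condition matches the ``connected'' relation of Subsection \ref{s21}. When $i=1$, a $1$-simplex at $a$ is a square $\alpha$ whose two diagonal corners equal $a$, and the assignment $[\alpha]\mapsto[\mathrm{t}^{\mathrm{v}}\alpha,\mathrm{s}^{\mathrm{h}}\alpha]$ provides the candidate map to $\pi_1(\mathcal{G},a)$; its inverse picks, for each $(g,u)\in\mathcal{G}(a)$, any filling square, which exists by the filling condition. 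One then checks that the $\sim$-relation of Subsection 3.2 matches the simplicial homotopy relation on $1$-simplices (where a homotopy is a $(2,2)$-bisimplex, i.e.\ a $2\times 2$ array of squares) and that the composition rule defined there by a filler $\gamma$ agrees with the $\pi_1$-composition defined via a $2$-horn filler of $\dia\dn\mathcal{G}$. When $i=2$, a $2$-simplex at $a$ is a $2\times 2$ array of squares of $\mathcal{G}$ whose outer boundary consists of identities at $a$; the interchange law lets the four squares assemble unambiguously into a single square with identity boundary, yielding an element of $\pi_2(\mathcal{G},a)$, while the inverse map pads a square with identity squares on the remaining three quadrants.

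The main obstacle is the case $i=1$: one must carefully translate between the simplicial homotopy relation on $1$-simplices, mediated by full $(2,2)$-bisimplices, and the relation $\sim$ of Subsection 3.2, mediated by two individual squares, and then verify that the two group operations agree. Both steps rely on the filling condition, which lets one replace abstract simplicial fillers by concrete squares of $\mathcal{G}$, and on the interchange law, which reconciles the horizontal and vertical ways of composing configurations of squares.
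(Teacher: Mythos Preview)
Your overall strategy is exactly the paper's: reduce to the Kan complex $\dia\dn\mathcal{G}$ via Theorem~\ref{thfc} and Facts~\ref{f1}(2), then compare case by case. Your treatment of $i=0$, $i=1$, and $i\geq 3$ matches the paper's argument closely.

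The case $i=2$, however, contains a genuine error. A $2$-simplex of $\dia\dn\mathcal{G}$ based at $a$ is \emph{not} merely a $2\times 2$ array with identity outer boundary: it is a double functor $F:[2]\otimes[2]\to\mathcal{G}$ whose three diagonal faces $Fd^0,Fd^1,Fd^2$ all equal $\mathrm{I}a$. Concretely, $Fd^2$ is the top-left square, $Fd^0$ is the bottom-right square, and $Fd^1$ is the \emph{composite} of all four. So your proposed map ``compose the four squares'' always returns $Fd^1=\mathrm{I}a$, hence is the zero map, and your proposed inverse ``pad $\sigma$ with three identity squares'' does not land in the based $2$-simplices (one diagonal face will be $\sigma$, not $\mathrm{I}a$).

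The paper's fix is to invoke Lemma~\ref{lus} once more: since every $\Lambda^{\!k}[3]$-horn fills \emph{uniquely}, the homotopy relation on $2$-simplices is trivial, and $\pi_2(\dia\dn\mathcal{G},a)$ is literally the \emph{set} of $2$-simplices with all faces $\mathrm{I}a$. The constraints $Fd^0=Fd^2=\mathrm{I}a$ force the two diagonal quadrants to be $\mathrm{I}a$ and all internal edges to be identities at $a$; then the off-diagonal square, call it $\sigma$, lies in $\pi_2(\mathcal{G},a)$, and the remaining constraint $Fd^1=\mathrm{I}a$ forces the other off-diagonal square to be $\sigma^{-1}$. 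The bijection is $F\mapsto\sigma$, and one checks directly that it respects the group laws.
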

\begin{proof} By taking into account Fact \ref{f1} (1), we shall identify the homotopy groups
of $|\mathcal{G}|$ with those of the Kan complex (by Theorem
\ref{thfc}) $\dia\dn\mathcal{G}$, which are defined, as we
noted in the preliminary Section 2, using only its simplicial
structure.

To compare the $\pi_0$ sets, observe that the 0-simplices
$a\in\dia\dn\mathcal{G}_0=\dn\mathcal{G}_{0,0}$ are precisely the
objects of $\mathcal{G}$. Furthermore, two 0-simplices $a,\,b$ are
in the same connected component of $\dia\dn\mathcal{G}$ if and only
if there is a square (i.e., a 1-simplex) of the form
$$
\xymatrix@C=12pt@R=12pt{b \ar@{}[rd]|{\textstyle \exists ?}& \cdot \ar@{.>}[l] \\
\cdot\ar@{.>}[u] & a,\ar@{.>}[l]\ar@{.>}[u]}
$$
that is, since $\mathcal{G}$ satisfies the filling condition, if and
only if $a$ and $b$ are connected in $\mathcal{G}$ (see Subsetion \ref{s21}).
Thus,  $\pi_0|\mathcal{G}|=\pi_0\mathcal{G}$.

We now compare the $\pi_1$ groups. An element
$[\alpha]\in\pi_1(|\mathcal{G}|,|a|)$ is the equivalence class of a
square $\alpha$ in $\mathcal{G}$ of the form
$$
\xymatrix@R=12pt@C=12pt{a\ar@{}[rd]|{\textstyle \alpha} & \cdot\ar[l]_-g \\
\cdot\ar[u] & a\ar[l]\ar[u]_(0.4)u}
$$
and $[\alpha]=[\alpha']$ if and only if there is a configuration of
squares in $\mathcal{G}$ of the form
$$
\xymatrix@C=12pt@R=14pt{ a\ar@{}[rd]|{\textstyle \alpha} &
\cdot\ar@{}[rd]|{\textstyle x}\ar[l]_-g &
 \cdot\ar[l]_-{g'} \\
\cdot\ar@{}[rd]|{\textstyle y}\ar[u] & a\ar@{}[rd]|{\textstyle
\mathrm{I}a}\ar[u]\ar@{}@<2pt>[u]_(0.4)u\ar[l] &
a\ar[u]_(0.4){u'}\ar@<-0.1ex>@{-}[l]\ar@<0.1ex>@{-}[l] \\
                                        \cdot\ar[u] & a\ar[l]\ar@<-0.1ex>@{-}[u]\ar@<0.1ex>@{-}[u]
                                        & a\ar@<-0.1ex>@{-}[l]
\ar@<0.1ex>@{-}[l]\ar@<-0.1ex>@{-}[l]\ar@<0.1ex>@{-}[u]\ar@<-0.1ex>@{-}[u]}
$$
such that $(\alpha\circ_{\mathrm{h}}x)\circ_\mathrm{v}y=\alpha'$. By
recalling now the definition of the homotopy group $\pi_1
(\mathcal{G} , a)$, we observe that, if $[\alpha]=[\alpha']$ in
$\pi_1(|\mathcal{G}|, |a|)$, then, by the existence of the squares
$\alpha$ and $\alpha\circ_\mathrm{h}x$, we have
$[g,u]=[g\circ_{\mathrm{h}} g',u']$ in $\pi_1(\mathcal{G},a)$; that
is,$[\mathrm{t}^\mathrm{v} \alpha, \mathrm{s}^\mathrm{h}
\alpha]=[\mathrm{t}^\mathrm{v} \alpha', \mathrm{s}^\mathrm{h}
\alpha']$. It follows that there is a well-defined map
$$
\begin{array}{rcl}
 \Phi:\pi_1(|\mathcal{G}|,|a|) & \longrightarrow & \pi_1(\mathcal{G},a). \\
 \left[ \alpha \right] & \longmapsto & [g,u]\!=\![\mathrm{t}^\mathrm{v} \alpha, \mathrm{s}^\mathrm{h} \alpha]
\end{array}
$$
This map is actually a group homomorphism. To see that, let
$$
\xymatrix@C=12pt@R=12pt{a\ar@{}[rd]|{\textstyle\alpha_1} & \cdot\ar[l]_-{g_1} \\
\cdot\ar[u] & a\ar[l]\ar[u]_-{u_1}} \hspace{6pt}
\xymatrix@C=12pt@R=12pt{a\ar@{}[rd]|{\textstyle \alpha_2} & \cdot\ar[l]_-{g_2} \\
\cdot\ar[u] & a\ar[l]\ar[u]_-{u_2}}
$$
be squares representing elements $[\alpha_1],\,[\alpha_2]\in
\pi_1(|\mathcal{G}|,|a|)$. Then, its product in the homotopy group
$\pi_1(|\mathcal{G}|,|a|)$ is
$[\alpha_1]\circ[\alpha_2]=[(\alpha_1\circ_\mathrm{h}
\beta)\circ_\mathrm{v}( \gamma\circ_\mathrm{h}\alpha_2)]$, where
$\beta$ and $\gamma$ are any squares in $\mathcal{G}$ defining a
configuration of the form (i.e., a 2-simplex of
$\dia\dn\mathcal{G}$)
$$
\xymatrix@C=12pt@R=12pt{ a \ar@{}[rd]|{\textstyle\alpha_1} & \cdot
\ar[l]_-{g_1} \ar@{}[rd]|{\textstyle \beta} &
\cdot \ar[l]_-g \\
\cdot \ar@{}[rd]|{\textstyle \gamma}\ar[u] & a\ar@{}[rd]|{\textstyle\alpha_2}\ar[u]\ar[l] & \cdot\ar[l]\ar[u]_-u\\
\cdot\ar[u] & \cdot\ar[l]\ar[u] &  a\ar[l]\ar[u]_{u_2}}
$$
Hence,
$$
\Phi([\alpha_1]\circ[\alpha_2])=[g_1\circ_h g,u\circ_v
u_2]=[g_1,u_1]\circ[g_2,u_2]= \Phi([\alpha_1])\circ\Phi([\alpha_2]),
$$
and therefore $\Phi$ is a homomorphism.

From the filling condition on $\mathcal{G}$, it follows that $\Phi$
is a surjective map. To prove that it is also injective,  suppose
$\Phi[\alpha_1]=\Phi[\alpha_2]$, where $[\alpha_1],\,[\alpha_2]\in
\pi_1(|\mathcal{G}|,a)$ are as above. This means that there are
squares in $\mathcal{G}$, say $x_1$ and $x_2$, of the form
$$
\xymatrix@C=12pt@R=12pt{a\ar@{}[rd]|{\textstyle x_1} & \cdot\ar[l]_-{g_1} \\
\cdot\ar[u]^-w & a\ar[l]^-f\ar[u]_-{u_1}} \hspace{8pt}
\xymatrix@C=12pt@R=12pt{a\ar@{}[rd]|{\textstyle x_2} & \cdot\ar[l]_-{g_2} \\
\cdot\ar[u]^-w & a\ar[l]^-f\ar[u]_-{u_2}}
$$
with which we can form the following three 2-simplices of
$\dia\dn\mathcal{G}$
$$
\xymatrix@C=35pt@R=16pt{\cdot\ar@{}[rd]|{\textstyle x_1} &
\cdot\ar@{}[rd]|{\textstyle\mathrm{I}^\mathrm{h}u_1}\ar[l] & \cdot
\ar@<0.1ex>@{-}[l]\ar@<-0.1ex>@{-}[l] \\
\cdot\ar@{}[rd]|{\textstyle
x^{\text{-}1_{\!\mathrm{v}}}_1\!\!\circ_\mathrm{v}\!\alpha_1}\ar[u]
& \cdot\ar@{}[rd]|{\textstyle\mathrm{I}a}\ar[l]\ar[u]&
\cdot\ar[u]\ar@<0.1ex>@{-}[l]\ar@<-0.2ex>@{-}[l]\\
\cdot\ar[u] & \cdot\ar[l]\ar@<0.1ex>@{-}[u]\ar@<-0.1ex>@{-}[u] &
\cdot
\ar@<0.1ex>@{-}[l]\ar@<-0.1ex>@{-}[l]\ar@<0.1ex>@{-}[u]\ar@<-0.1ex>@{-}[u]\ar@{}[u]_(0.0){\textstyle
, }}
\hspace{10pt}
\xymatrix@C=35pt@R=16pt{\cdot\ar@{}[rd]|{\textstyle x_2} &
\cdot\ar@{}[rd]|{\textstyle\mathrm{I}^\mathrm{h}u_2}\ar[l] & \cdot
\ar@<0.1ex>@{-}[l]\ar@<-0.1ex>@{-}[l] \\
\cdot\ar@{}[rd]|{\textstyle
x^{\text{-}1_{\!\mathrm{v}}}_2\!\!\circ_\mathrm{v}\!\alpha_2}\ar[u]
& \cdot\ar@{}[rd]|{\textstyle\mathrm{I}a}\ar[l]\ar[u] &
\cdot\ar[u]\ar@<0.1ex>@{-}[l]\ar@<-0.2ex>@{-}[l]\\
\cdot\ar[u] & \cdot\ar[l]\ar@<0.1ex>@{-}[u]\ar@<-0.1ex>@{-}[u] &
\cdot
\ar@<0.1ex>@{-}[l]\ar@<-0.1ex>@{-}[l]\ar@<0.1ex>@{-}[u]\ar@<-0.1ex>@{-}[u]\ar@{}[u]_(0.0){\textstyle
, }}
 \hspace{10pt}
\xymatrix@C=35pt@R=16pt{\cdot\ar@{}[rd]|{\textstyle x_1} &
\cdot\ar@{}[rd]|{\textstyle
x^{\text{-}1_{\mathrm{h}}}_1\!\!\circ_\mathrm{h}\!x_2 }\ar[l] &
\cdot
\ar[l] \\
\cdot\ar@{}[rd]|{\textstyle
\textstyle\mathrm{I}^\mathrm{v}\!f}\ar[u] &
\cdot\ar@{}[rd]|{\textstyle\mathrm{I}a}\ar[l]\ar[u] &
\cdot\ar[u]\ar@<0.1ex>@{-}[l]\ar@<-0.2ex>@{-}[l]\\
\cdot \ar@<0.1ex>@{-}[u]\ar@<-0.1ex>@{-}[u] &
\cdot\ar[l]\ar@<0.1ex>@{-}[u]\ar@<-0.1ex>@{-}[u] & \cdot
\ar@<0.1ex>@{-}[l]\ar@<-0.1ex>@{-}[l]\ar@<0.1ex>@{-}[u]\ar@<-0.1ex>@{-}[u]\ar@{}[u]_(0.0){\textstyle
. }}
$$
The first one shows that $[x_1]=[\alpha_1]$  in the group
$\pi_1(|\mathcal{G}|,a)$, the second that $[x_2]=[\alpha_2]$, and
the third that $[x_1]=[x_2]$. Whence $[\alpha_1]=[\alpha_2]$, as
required.

Finally, we show the isomorphisms
$\pi_1(|\mathcal{G}|,|a|)\cong\pi_i(\mathcal{G},a)$, for $i\geq 2$.
For $i\geq 3$, it follows from Lemma \ref{lus} that
$\pi_i(|\mathcal{G}|,a)=0$, and the result becomes obvious. For the
case $i=2$, it is also a consequence of the afore-mentioned Lemma \ref{lus}
that the homotopy relation between 2-simplices in
$\dia\dn\mathcal{G}$ is trivial. Then, the group
$\pi_2(|\mathcal{G}|,|a|)$ consists of all 2-simplices in
$\dia\dn\mathcal{G}$ of the form
$$
\xymatrix@C=12pt@R=12pt{\cdot\ar@{}[rd]|{\textstyle\mathrm{I}a} &
\cdot\ar@<0.1ex>@{-}[l]\ar@<-0.1ex>@{-}[l]
\ar@{}[rd]|{\textstyle\sigma} & \cdot
\ar@<0.1ex>@{-}[l]\ar@<-0.1ex>@{-}[l] \\
\cdot\ar@{}[rd]|{\textstyle\sigma^{\text{-}1}}
\ar@<0.1ex>@{-}[u]\ar@<-0.1ex>@{-}[u] & \cdot\ar@{}[rd]|{\textstyle
\mathrm{I}a} \ar@<0.1ex>@{-}[l]\ar@<-0.1ex>@{-}[l]
\ar@<0.1ex>@{-}[u]\ar@<-0.1ex>@{-}[u]  &
 \ar@<0.1ex>@{-}[l]\ar@<-0.1ex>@{-}[l]
 \ar@<0.1ex>@{-}[u]\ar@<-0.1ex>@{-}[u] \\
 \ar@<0.1ex>@{-}[u]\ar@<-0.1ex>@{-}[u] \cdot & \cdot
\ar@<0.1ex>@{-}[l]\ar@<-0.1ex>@{-}[l]
 \ar@<0.1ex>@{-}[u]\ar@<-0.1ex>@{-}[u] & \cdot
 \ar@<0.1ex>@{-}[l]\ar@<-0.1ex>@{-}[l]
 \ar@<0.1ex>@{-}[u]\ar@<-0.1ex>@{-}[u] }
$$
for $\sigma\in\pi_2(\mathcal{G},a)$, whence the isomorphism becomes
clear.
\end{proof}

\begin{corollary}\label{we2}
A double functor $F\!:\!\mathcal{G}\to\mathcal{G}'$ is a weak
equivalence if and only if the induced cellular  map on realizations
$| F|\!:\!|\mathcal{G}|  \to |\mathcal{G}'|$ is a homotopy
equivalence.
\end{corollary}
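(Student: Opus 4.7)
The plan is to reduce the corollary to the simplicial Whitehead-type theorem for Kan complexes (the final Fact of Subsection 2.1) via Theorem \ref{th0} and Fact \ref{f1}(3). First observe that, by Theorem \ref{thfc}, both $\dia\dn\mathcal{G}$ and $\dia\dn\mathcal{G}'$ are Kan complexes, and the double functor $F$ induces a simplicial map $\dia\dn F:\dia\dn\mathcal{G}\to\dia\dn\mathcal{G}'$ between them whose geometric realization is, by definition, $|F|:|\mathcal{G}|\to|\mathcal{G}'|$.

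Next I would invoke naturality. The isomorphism $\pi_i(\mathcal{G},a)\cong\pi_i(|\mathcal{G}|,|a|)$ supplied by Theorem \ref{th0} is natural in $\mathcal{G}$: a quick inspection of its proof shows that each of the bijections $\Phi$ was built from the boundary data of squares of $\mathcal{G}$ (equivalently, from the simplicial data of $\dia\dn\mathcal{G}$) and hence commutes strictly with any double functor. Combined with the naturality of Fact \ref{f1}(2), this yields, for every object $a$ of $\mathcal{G}$ and every $i\geq 0$, a commutative square
\begin{equation*}
\xymatrix{\pi_i(\mathcal{G},a)\ar[r]^{\pi_iF}\ar[d]_{\cong} & \pi_i(\mathcal{G}',Fa)\ar[d]^{\cong}\\ \pi_i(\dia\dn\mathcal{G},a)\ar[r]_-{\pi_i\dia\dn F} & \pi_i(\dia\dn\mathcal{G}',Fa).}
\end{equation*}
Since $\dia\dn\mathcal{G}_0=\dn\mathcal{G}_{0,0}$ is exactly the set of objects of $\mathcal{G}$, the base vertices of the Kan complex $\dia\dn\mathcal{G}$ are precisely the objects of $\mathcal{G}$. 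Hence $F$ is a weak equivalence of double groupoids if and only if $\dia\dn F$ induces isomorphisms on $\pi_i$ at every base vertex, for every $i\geq 0$.

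Finally, the simplicial Whitehead theorem says the latter condition is equivalent to $\dia\dn F$ being a homotopy equivalence of Kan complexes, and by Fact \ref{f1}(3) this is in turn equivalent to $|F|=|\dia\dn F|$ being a homotopy equivalence of topological spaces, completing the chain of equivalences. The only step that requires any care is the naturality of the isomorphisms in Theorem \ref{th0}, which is immediate from their explicit construction; the rest is pure bookkeeping.
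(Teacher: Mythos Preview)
Your proof is correct and follows exactly the approach the paper intends: the corollary is stated without proof immediately after Theorem~\ref{th0}, and your argument spells out precisely the deduction the paper leaves implicit, namely combining the natural isomorphisms of Theorem~\ref{th0} with the simplicial Whitehead theorem and Fact~\ref{f1}(3).
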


\section{A left adjoint to the double nerve functor.}\label{lad}
Recall from Theorem \ref{thfc} (ii) that the double nerve
$\dn\mathcal{G}$, of any double groupoid satisfying the filling
condition, satisfies the extension condition. Moreover, since both
simplicial sets $\dn\mathcal{G}_{\ast,0}$ and
$\dn\mathcal{G}_{0,\ast}$ are nerves of groupoids, all homotopy
groups $\pi_2(\dn\mathcal{G}_{\ast,0},a)$ and
$\pi_2(\dn\mathcal{G}_{0,\ast},a)$ vanish. Our goal in this section
is to prove that the double nerve functor, $\mathcal{G} \mapsto
\dn\mathcal{G}$, embeds, as a reflexive subcategory, the category of
double groupoids with filling condition into the category
of those bisimplicial sets $K$ that satisfy the extension condition
and such that $\pi_2(K_{\ast,0},a)=0=\pi_2(K_{0,\ast},a)$ for all
vertices $a\in K_{0,0}$. That is, there is a reflector functor for
such bisimplicial sets
$$
K\mapsto \adj K, $$
which works as a bisimplicial version of Brown's construction in \cite[Theorem 2.1]{brown2}.
Furthermore, as we will prove, the resulting double groupoid $\adj K$  always represents the homotopy 2-type of the
input bisimplicial set $K$, in the sense that there is a natural weak 2-equivalence $|K|\to |\adj K|$.

For any given bisimplicial set $K$, under the assumption that it satisfies the extension condition and both the Kan complexes   $K_{*,0}$ and   $K_{0,*}$ have trivial groups $\pi_2$,  the definition of the {\em homotopy double groupoid} $\adj K$ is as follows:

\vspace{0.2cm}

The objects of $\adj K$ are the vertices $a:\Delta[0,0]\to K$ of
$K$.

\vspace{0.2cm}

The groupoid of horizontal morphisms is the horizontal fundamental groupoid $\mathrm{P} K_{\!\ast,0}$,  and the groupoid
of vertical morphisms is the vertical fundamental groupoid  $\mathrm{P} K_{0,\ast}$ (see the last part of Subsection
\ref{s22}). Thus, a horizontal morphism $[f]_\mathrm{h}:a\to b$ is the horizontal homotopy class of a bisimplex
$f:\Delta[1,0]\to K$ with $fd^0_\mathrm{h}=a$ and $fd^1_\mathrm{h}=b$, whereas a vertical morphism in $\adj K$,
$[u]_\mathrm{v}:a\to b$, is the vertical homotopy class of a bisimplex $u:\Delta[0,1]\to K$ with $ud^0_\mathrm{v}=a$ and
$ud^1_\mathrm{v}=b$.

\vspace{0.2cm}

A square of $\adj K$ is the bihomotopy class $[[x]]$ of a bisimplex $x:\Delta[1,1]\to K$, with boundary
$$
\xymatrix@R=16pt@C=18pt{\cdot\ar @{}[dr]|{} & \cdot\ar[l]_{[xd^1_{\mathrm{v}}]_{\mathrm{h}}} \\
\cdot\ar[u]^{[xd^1_{\mathrm{h}}]_{\mathrm{v}}} &
\cdot\ar[l]^{[xd^0_{\mathrm{v}}]_{\mathrm{h}}}
\ar[u]_{[xd^0_{\mathrm{h}}]_{\mathrm{v}}}}
$$
which is well defined thanks to Lemma \ref{p5}.

\vspace{0.2cm}

The horizontal composition of squares in $\adj K$ is the only one making
the correspondence
$$
\Big(xd^0_{\mathrm{h}}\overset{[x]_{\mathrm{h}}}\to
xd^1_{\mathrm{h}}\Big) \overset{[\ \ ]}\longmapsto
 \Big([xd^0_{\mathrm{h}}]_{\mathrm{v}}\overset{[[x]]}\to
[xd^1_{\mathrm{h}}]_{\mathrm{v}}\Big)
$$
a surjective fibration of groupoids  from the horizontal fundamental groupoid $\mathrm{P} K_{\ast,1}$ to the
horizontal groupoid of squares in $\adj K$. To define this composition, we shall need the following:

\begin{lemma} \label{lad1} Let $x, y:\Delta[1,1]\to K$ be bisimplices such that
$[xd^0_{\mathrm{h}}]_{\mathrm{v}}
=[yd^1_{\mathrm{h}}]_{\mathrm{v}}$. Then, there is a bisimplex
$x'\!:\!\Delta[1,1]\to K$ such that
$[x']_{\mathrm{v}}=[x]_{\mathrm{v}}$ and
$x'd^0_{\mathrm{h}}=yd^1_{\mathrm{h}}$.
\end{lemma}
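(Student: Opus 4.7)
The plan is to construct $x'$ by horn-filling in $K$, realizing it as the missing vertical face of a bisimplex of bidegree $(1,2)$ that is simultaneously a vertical homotopy of $x$ and carries the given vertical homotopy on its $0$-th horizontal face. Since $K_{0,*}$ is a Kan complex and $[xd^0_{\mathrm{h}}]_{\mathrm{v}}=[yd^1_{\mathrm{h}}]_{\mathrm{v}}$, the hypothesis supplies a bisimplex $h:\Delta[0,2]\to K$ whose vertical faces satisfy
\[ hd^0_{\mathrm{v}}=xd^0_{\mathrm{h}}d^0_{\mathrm{v}}s^0_{\mathrm{v}},\quad hd^1_{\mathrm{v}}=xd^0_{\mathrm{h}},\quad hd^2_{\mathrm{v}}=yd^1_{\mathrm{h}}. \]

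Next, I would set up the horn $\Lambda^{1,2}[1,2]\to K$ by prescribing
\[ Hd^0_{\mathrm{h}}=h,\qquad Hd^0_{\mathrm{v}}=xd^0_{\mathrm{v}}s^0_{\mathrm{v}},\qquad Hd^1_{\mathrm{v}}=x, \]
leaving both $Hd^1_{\mathrm{h}}$ and $Hd^2_{\mathrm{v}}$ free. The choice $(k,l)=(1,2)$ is precisely tailored so that $x'$ will appear as the missing vertical face. The compatibility conditions listed in Subsection~\ref{s22} then need to be checked; most of them reduce to the commutation $d^i_{\mathrm{h}}d^j_{\mathrm{v}}=d^j_{\mathrm{v}}d^i_{\mathrm{h}}$ together with the standard identity $s^0d^j=\mathrm{id}$ for $j=0,1$, and the only substantial check $Hd^1_{\mathrm{v}}d^0_{\mathrm{h}}=Hd^0_{\mathrm{h}}d^1_{\mathrm{v}}$ reads $xd^0_{\mathrm{h}}=hd^1_{\mathrm{v}}$, which is built into the definition of $h$.

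Having verified compatibility, I would invoke the extension condition on $K$ (the third type, for $\Lambda^{k,l}[p,q]$-horns) to obtain a filler $H:\Delta[1,2]\to K$, and define $x':=Hd^2_{\mathrm{v}}$. Then the three vertical faces of $H$ are $xd^0_{\mathrm{v}}s^0_{\mathrm{v}}$, $x$ and $x'$, so $H$ is literally a vertical homotopy from $x$ to $x'$ in the Kan complex $K_{1,*}$, yielding $[x]_{\mathrm{v}}=[x']_{\mathrm{v}}$. The remaining identity follows from a single simplicial manipulation:
\[ x'd^0_{\mathrm{h}}=Hd^2_{\mathrm{v}}d^0_{\mathrm{h}}=Hd^0_{\mathrm{h}}d^2_{\mathrm{v}}=hd^2_{\mathrm{v}}=yd^1_{\mathrm{h}}. \]
I do not expect a serious obstacle here; the only subtle point is picking the correct horn indices so that the extension condition on bisimplicial sets applies directly, avoiding any appeal to the hypothesis that $\pi_2(K_{*,0},a)$ and $\pi_2(K_{0,*},a)$ vanish (these will be needed for later, stronger statements, but not for this lemma).
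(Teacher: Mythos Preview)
Your proof is correct and follows essentially the same approach as the paper's: both select a vertical homotopy (your $h$, the paper's $\alpha$) from $xd^0_{\mathrm{h}}$ to $yd^1_{\mathrm{h}}$, fill the horn $\Lambda^{1,2}[1,2]$ with data $(h,-;xd^0_{\mathrm{v}}s^0_{\mathrm{v}},x,-)$ to obtain $H$ (the paper's $\beta$), and set $x'=Hd^2_{\mathrm{v}}$. Your additional explicit verification of the horn compatibility conditions and your remark that the $\pi_2$-vanishing hypotheses are not needed here are both accurate.
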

\begin{proof} Once any vertical homotopy
from $xd^0_{\mathrm{h}}$ to $yd^1_{\mathrm{h}}$ is selected, say $\alpha:\Delta[0,2]\to K$,   let
$\beta:\Delta[1,2]\to K$ be any bisimplex solving the extension
problem
$$
\xymatrix@C=60pt@R=16pt{\Lambda^{1,2}[1,2]\ar[r]^-{(\alpha,-;xd^0_{\mathrm{v}}s^0_{\mathrm{v}},\,x,-)}
\ar @{^{(}->}[d]& K. \\
          \Delta[1,2]\ar @{.>}[ru]_\beta & }
$$
Then, we take $x'=\beta d^2_{\mathrm{v}}:\Delta[1,1]\to K$. Since
$\beta$ becomes a vertical homotopy from $x$ to $x'$, we have
$[x]_{\mathrm{v}}=[x']_{\mathrm{v}}$. Moreover,
$x'd^0_{\mathrm{h}}=\beta d^2_{\mathrm{v}}d^0_{\mathrm{h}}=\beta
d^0_{\mathrm{h}}d^2_{\mathrm{v}}=\alpha
d^2_{\mathrm{v}}=yd^1_{\mathrm{h}}$, as required.
\end{proof}

\noindent {\bf Remark.} Note that, for any such  bisimplex $x'$ as in the lemma, we have $[[x']]=[[x]]$ and
$x'd^i_{\mathrm{v}}= xd^i_{\mathrm{v}}$ for $i=0,1$.

\vspace{0.2cm} Now define the horizontal composition of squares in
$\adj K$ by
\begin{equation}\label{comp}\begin{array}{lll}
[[x]]\circ_{\mathrm{h}}
[[y]]=[[x']_{\mathrm{h}}\circ_{\mathrm{h}}[y]_{\mathrm{h}}]& \text{
if }& [x]_{\mathrm{v}}=[x']_{\mathrm{v}} \text{ and }
x'd^0_{\mathrm{h}}=yd^1_{\mathrm{h}},\end{array}
\end{equation}
where $[x']_{\mathrm{h}}\circ_{\mathrm{h}}[y]_{\mathrm{h}}$ is the composite in the fundamental groupoid $\mathrm{P}
K_{*,1}$, that is,
\begin{equation}
[[x]]\circ_{\mathrm{h}}[[y]] = [[\gamma d^1_{\mathrm{h}}]]
\end{equation}
for $\gamma : \Delta[2,1]\to K$ any bisimplex with $\gamma
d^2_{\mathrm{h}}=x'$ and $\gamma d^0_{\mathrm{h}}=y$.

In view of Lemma \ref{lad1}, our product is given for all squares
$[[x]]$ and $[[y]]$ with
$\mathrm{s}^\mathrm{h}[[x]]=\mathrm{t}^\mathrm{h}[[y]]$. We also have
the lemma below.

\begin{lemma} The horizontal composition of squares in $\adj K$ is well defined.
\end{lemma}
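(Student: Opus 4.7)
The product $[[x]] \circ_{\mathrm{h}} [[y]] = [[\gamma d^1_{\mathrm{h}}]]$ depends on three auxiliary selections: representatives $x, y$ of the two bihomotopy classes, a replacement bisimplex $x'$ (produced by Lemma \ref{lad1}, satisfying $[x']_{\mathrm{v}} = [x]_{\mathrm{v}}$ and $x' d^0_{\mathrm{h}} = y d^1_{\mathrm{h}}$), and a horn-filler $\gamma : \Delta[2,1] \to K$ with $\gamma d^2_{\mathrm{h}} = x'$ and $\gamma d^0_{\mathrm{h}} = y$. The plan is to verify independence from each of these choices in turn.

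The first independence is essentially formal: with $x'$ and $y$ fixed, the horizontal homotopy class $[\gamma d^1_{\mathrm{h}}]_{\mathrm{h}}$ is by definition the composite $[x']_{\mathrm{h}} \circ_{\mathrm{h}} [y]_{\mathrm{h}}$ in the horizontal fundamental groupoid $\mathrm{P} K_{*,1}$ recalled at the end of Subsection \ref{s22}, whose well-definedness follows from the Kan property of the simplicial set $K_{*,1}$.

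For the second check (fixing $x$ and $y$), suppose $x'_1$ and $x'_2$ are two valid replacements. Then $[x'_1]_{\mathrm{v}} = [x]_{\mathrm{v}} = [x'_2]_{\mathrm{v}}$, witnessed by some vertical homotopy $\delta : \Delta[1,2] \to K$ from $x'_1$ to $x'_2$, and $x'_1 d^0_{\mathrm{h}} = x'_2 d^0_{\mathrm{h}} = y d^1_{\mathrm{h}}$. Choosing horn-fillers $\gamma_i : \Delta[2,1] \to K$ for each $x'_i$, I would assemble the data $(\gamma_1, \gamma_2, \delta)$ together with a doubly degenerate vertical homotopy built from $y$ into a bisimplicial horn $\Lambda^{k,l}[2,2] \to K$, and fill it using the extension condition on $K$. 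Reading off the remaining faces of the resulting filler $\Xi : \Delta[2,2] \to K$ then produces a $w : \Delta[1,1] \to K$ with $[\gamma_1 d^1_{\mathrm{h}}]_{\mathrm{h}} = [w]_{\mathrm{h}}$ and $[w]_{\mathrm{v}} = [\gamma_2 d^1_{\mathrm{h}}]_{\mathrm{v}}$, which is precisely the bihomotopy condition of Lemma \ref{p3}.

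Finally, to handle independence from the representatives $x \in [[x]]$ and $y \in [[y]]$, given $[[x_1]] = [[x_2]]$ and $[[y_1]] = [[y_2]]$, Lemma \ref{p3} supplies intermediate bisimplices allowing me to pass from $(x_1, y_1)$ to $(x_2, y_2)$ in two steps, each changing only one factor and only through one of the two one-sided homotopy relations. The equality of composites in each step reduces to another $\Delta[2,2]$ horn-filling analogous to the one above. The main obstacle throughout will be the bookkeeping required by the mixed compatibility conditions from Subsection \ref{s22}, especially $x'_j d^i_{\mathrm{h}} = x_i d^j_{\mathrm{v}}$; these force several of the horn's input faces to be constructed by preliminary applications of Lemma \ref{lad1} (or minor variants thereof) in order that all boundaries match before the $2\times 2$ filler can be formed.
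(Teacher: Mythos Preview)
Your decomposition into three independence checks matches the paper's, and your treatment of the first (independence from $\gamma$) is fine. The genuine gap is in your second step, independence from the replacement $x'$.

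You take a vertical homotopy $\delta : \Delta[1,2] \to K$ from $x'_1$ to $x'_2$ and propose to place it, together with $\gamma_1$, $\gamma_2$, and a degeneracy $y s^j_{\mathrm{v}}$, into a $\Lambda^{k,l}[2,2]$-horn. But the mixed compatibility condition $x'_j d^i_{\mathrm{h}} = x_i d^j_{\mathrm{v}}$ forces $\delta d^0_{\mathrm{h}}$ to agree with $(y s^j_{\mathrm{v}}) d^1_{\mathrm{h}} = y d^1_{\mathrm{h}} s^j_{\mathrm{v}}$, a \emph{degenerate} self-homotopy of $y d^1_{\mathrm{h}}$. There is no reason for $\delta d^0_{\mathrm{h}}$ to be degenerate: it is merely some $2$-simplex of $K_{0,*}$ whose three faces all equal $y d^1_{\mathrm{h}}$. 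Lemma~\ref{lad1} and its variants adjust $(1,1)$-bisimplices within a vertical homotopy class; they do not let you straighten the $(0,2)$-bisimplex $\delta d^0_{\mathrm{h}}$.

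What closes this gap is precisely the hypothesis $\pi_2(K_{0,*},a)=0$, which you never invoke. By Fact~\ref{p2}, that hypothesis guarantees any two $2$-simplices of $K_{0,*}$ with the same boundary are homotopic, so $\delta d^0_{\mathrm{h}}$ can be traded for the degenerate one. The paper's proof makes this explicit: it takes vertical homotopies $\beta,\beta'$ from $x$ to the two replacements, observes that $\beta d^0_{\mathrm{h}}$ and $\beta' d^0_{\mathrm{h}}$ have the same faces, applies $\pi_2(K_{0,*})=0$ to get a homotopy $\omega:\Delta[0,3]\to K$ between them, and then uses $\omega$ as part of the input data for a $\Lambda^{1,3}[1,3]$-filling that produces a corrected homotopy $\widetilde{\beta}$ with $\widetilde{\beta} d^0_{\mathrm{h}} = \beta' d^0_{\mathrm{h}}$. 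Only after this correction do the subsequent horn-fillings go through. Your third step (independence from representatives) is along the right lines and, as in the paper, does not need the $\pi_2$ hypothesis; but the second step cannot be completed without it.
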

\begin{proof} We first prove that the square in (\ref{comp}) does not depend on
the choice of $x'$. To do so, suppose $x'':\Delta[1,1]\to K$ is
another bisimplex such that $[x]_{\mathrm{v}}=[x'']_{\mathrm{v}}$
and $x''d^0_{\mathrm{h}}=yd^1_{\mathrm{h}}$, and let $\beta,
\beta':\Delta[1,2]\to K$ be vertical homotopies from $x$ to $x'$ and
from $x$ to $x''$ respectively. Then, both bisimplices $\beta
d^0_{\mathrm{h}}:\Delta[0,2]\to K$ and
$\beta'd^0_{\mathrm{h}}:\Delta[0,2]\to K$  have the same vertical
faces. Since the $2^{\!\text{ nd}}$ homotopy groups of the Kan
complex $K_{0,\ast}$ vanish, it follows that $\beta
d^0_{\mathrm{h}}$ and $\beta'd^0_{\mathrm{h}}$ are vertically
homotopic (Fact \ref{p2}). Choose $\omega:\Delta[0,3]\to K$ any
vertical homotopy from $\beta d^0_{\mathrm{h}}$ to
$\beta'd^0_{\mathrm{h}}$, and then let $\Gamma:\Delta[1,3]\to K$ be
a solution to the extension problem
$$
\xymatrix@C=90pt@R=16pt{\Lambda^{\!1,3}[1,3]\ar[r]^-{(\omega,-;xd^0_{\mathrm{v}}s^0_{\mathrm{v}}
s^1_{\mathrm{v}},\,xs^1_{\mathrm{v}},\, \beta,-)} \ar @{^{(}->}[d] &
K \,.\\ \Delta[1,3]\ar @{.>}[ru]_-{\Gamma}}
$$
Then, the bisimplex $\widetilde{\beta}=\Gamma
d^3_{\mathrm{v}}:\Delta[1,2]\to K$ has vertical faces
$$\begin{array}{l}
\widetilde{\beta}d^0_{\mathrm{v}}=\Gamma
d^3_{\mathrm{v}}d^0_{\mathrm{v}}= \Gamma
d^0_{\mathrm{v}}d^2_{\mathrm{v}}=
xd^0_{\mathrm{v}}s^0_{\mathrm{v}}s^1_{\mathrm{v}}d^2_{\mathrm{v}}=
xd^0_{\mathrm{v}}s^0_{\mathrm{v}}, \\
\widetilde{\beta}d^1_{\mathrm{v}}=\Gamma
d^3_{\mathrm{v}}d^1_{\mathrm{v}}= \Gamma
d^1_{\mathrm{v}}d^2_{\mathrm{v}}= xs^1_{\mathrm{v}}d^2_{\mathrm{v}}=
x, \\
 \widetilde{\beta}d^2_{\mathrm{v}}= \Gamma
d^3_{\mathrm{v}}d^2_{\mathrm{v}}= \Gamma
d^2_{\mathrm{v}}d^2_{\mathrm{v}}= \beta d^2_{\mathrm{v}}= x',
\end{array}
$$
so that $\widetilde{\beta}$ is another vertical homotopy from $x$ to
$x'$, and moreover
$$
\widetilde{\beta}d^0_{\mathrm{h}}= \Gamma
d^3_{\mathrm{v}}d^0_{\mathrm{h}}= \Gamma
d^0_{\mathrm{h}}d^3_{\mathrm{v}}=\omega
d^3_{\mathrm{v}}=\beta'd^0_{\mathrm{h}},
$$
that is, $\widetilde{\beta}$ and $\beta'$ have both the same
horizontal $0$-face, say $\alpha$. Now let $\Phi:\Delta[1,3]\to K$
and $\theta:\Delta[2,2]\to K$ be solutions to the following
extension problems
$$
\xymatrix@C=90pt@R=16pt{\Lambda^{\!1,3}[1,3]\ar[r]^-{(\alpha s^1_{\mathrm{v}},-;
xd^0_{\mathrm{v}}s^0_{\mathrm{v}}s^1_{\mathrm{v}},\,\widetilde{\beta},\, \beta',-)} \ar @{^{(}->}[d] & K \\
\Delta[1,3]\ar @{.>}[ru]_-{\Phi}} \hspace{0.6cm}
\xymatrix@C=90pt@R=16pt{\Lambda^{\!1,2}[2,2]\ar[r]^-{(ys^1_{\mathrm{v}},-,\, \Phi d^3_{\mathrm{v}};\gamma
d^0_{\mathrm{v}}s^0_{\mathrm{v}},\,\gamma,-)} \ar @{^{(}->}[d] & K
\\ \Delta[2,2]\ar @{.>}[ru]_-{\theta}}
$$
where $\gamma:\Delta[2,1]\to K$ is any bisimplex such that $\gamma
d^2_{\mathrm{h}}=x'$ and $\gamma d^0_{\mathrm{h}}=y$. Then, $\theta$
is actually a vertical homotopy from $\gamma$ to $\gamma'=\theta
d^2_{\mathrm{v}}$, and this bisimplex $\gamma'$ satisfies that
$$\begin{array}{l}
\gamma'd^2_{\mathrm{h}}=\theta
d^2_{\mathrm{v}}d^2_{\mathrm{h}}=\theta
d^2_{\mathrm{h}}d^2_{\mathrm{v}}=\Phi
d^3_{\mathrm{v}}d^2_{\mathrm{v}}=\Phi
d^2_{\mathrm{v}}d^2_{\mathrm{v}}=\beta'd^2_{\mathrm{v}}= x'',\\
\gamma'd^0_{\mathrm{h}}=\theta d^2_{\mathrm{v}}d^0_{\mathrm{h}}=
\theta d^0_{\mathrm{h}}d^2_{\mathrm{v}}=
ys^1_{\mathrm{v}}d^2_{\mathrm{v}}= y.\end{array}
$$
Hence, $[x']_{\mathrm{h}}\circ_{\mathrm{h}}[y]_{\mathrm{h}}=[\gamma
d^1_{\mathrm{h}}]_{\mathrm{h}}$ whereas
$[x'']_{\mathrm{h}}\circ_{\mathrm{h}}[y]_{\mathrm{h}}=[\gamma'd^1_{\mathrm{h}}]_{\mathrm{h}}$.
Since the bisimplex $\theta d^1_{\mathrm{h}}:\Delta[1,2]\to K$ is a
vertical homotopy from $\gamma d^1_{\mathrm{h}}$ to $\gamma'
d^1_{\mathrm{h}}$, we conclude that $[[\gamma
d^1_{\mathrm{h}}]]=[[\gamma'd^1_{\mathrm{h}}]]$, that is,
$[[x']_{\mathrm{h}}\circ_{\mathrm{h}}[y]_{\mathrm{h}}]=
[[x'']_{\mathrm{h}}\circ_{\mathrm{h}}[y]_{\mathrm{h}}]$,
as required.

\vspace{0.2cm}

Suppose now $x_0,x_1,y:\Delta[1,1]\to K$ bisimplices with
$[[x_0]]=[[x_1]]$ and
$[x_0d^0_{\mathrm{h}}]_{\mathrm{v}}=[yd^1_{\mathrm{h}}]_{\mathrm{v}}$.
Then, for some $x:\Delta[1,1]\to K$, we have
$[x_0]_{\mathrm{v}}=[x]_{\mathrm{v}}$ and
$[x]_{\mathrm{h}}=[x_1]_{\mathrm{h}}$. Let $x'_0:\Delta[1,1]\to K$
be any bisimplex with $[x'_0]_{\mathrm{v}}=[x]_{\mathrm{v}}$ and
$x'_0d^0_{\mathrm{h}}=yd^1_{\mathrm{h}}$. Since
$[x'_0]_{\mathrm{v}}=[x_0]_{\mathrm{v}}$, we have
\begin{equation}\label{eq1}
[[x_0]]\circ_{\mathrm{h}}[[y]]=[[x'_0]_{\mathrm{h}}\circ
[y]_{\mathrm{h}}].
\end{equation}
Letting $\beta:\Delta[1,2]\to K$ be any vertical homotopy from $x$ to
$x'_0$ and $\delta:\Delta[2,1]\to K$ be any horizontal homotopy from
$x_1$ to $x$, we can choose $\theta:\Delta[2,2]\to K$, a bisimplex
making commutative the diagram
$$
\xymatrix@C=85pt@R=18pt{\Lambda^{\!1,2}[2,2]\ar[r]^-{(\beta
d^0_{\mathrm{h}}s^0_{\mathrm{h}},-,\, \beta; \delta
d^0_{\mathrm{v}}s^0_{\mathrm{v}},\, \delta, -)}\ar @{^{(}->}[d]& K
\\ \Delta[2,2]\ar[ru]_-\theta }
$$
Then, $\beta_1=\theta d^1_{\mathrm{h}}:\Delta[1,2]\to K$ is a
vertical homotopy from $x_1$ to $x'_1\!\!:=\beta_1
d^2_{\mathrm{v}}$, and since
$$
x'_1d^0_{\mathrm{h}}=\beta_1d^2_{\mathrm{v}}d^0_{\mathrm{h}}=\beta_1d^0_{\mathrm{h}}d^2_{\mathrm{v}}=\theta
d^1_{\mathrm{h}}d^0_{\mathrm{h}}d^2_{\mathrm{v}}= \theta
d^0_{\mathrm{h}}d^0_{\mathrm{h}}d^2_{\mathrm{v}}= \beta
d^0_{\mathrm{h}}d^2_{\mathrm{v}}=\beta
d^2_{\mathrm{v}}d^0_{\mathrm{h}}= x'_0d^0_{\mathrm{h}}=
yd^1_{\mathrm{h}},
$$
we have
\begin{equation}\label{eq2}
[[x_1]]\circ_{\mathrm{h}}[[y]]=[[x'_1]_{\mathrm{h}}\circ_{\mathrm{h}}[y]_{\mathrm{h}}].
\end{equation}
As $\theta d^2_{\mathrm{v}}:\Delta[2,1]\to K$ is a horizontal
homotopy from $x'_1$ to $x'_0$, we have
$[x'_0]_{\mathrm{h}}=[x'_1]_{\mathrm{h}}$. Therefore,  comparing
(\ref{eq1})  with (\ref{eq2}), we obtain the desired conclusion,
that is,
$$[[x_0]]\circ_{\mathrm{h}}[[y]]=[[x_1]]\circ_{\mathrm{h}}[[y]].$$

Finally, suppose $x,\,y_0,\,y_1:\Delta[1,1]\to K$ with
$[[y_0]]=[[y_1]]$ and
$[xd^0_{\mathrm{h}}]_{\mathrm{v}}=[y_0d^1_{\mathrm{h}}]_{\mathrm{v}}$.
Then, $[y_0]_{\mathrm{v}}=[y]_{\mathrm{v}},\,
[y]_{\mathrm{h}}=[y_1]_{\mathrm{h}}$, for some $y:\Delta[1,1]\to K$.
Let $x':\Delta[1,1]\to K$ be such that
$[x]_{\mathrm{v}}=[x']_{\mathrm{v}}$ and
$x'd^0_{\mathrm{h}}=yd^1_{\mathrm{h}}$. Since
$x'd^0_{\mathrm{h}}=y_1d^1_{\mathrm{h}}$, we have
\begin{equation}\label{eq3}
[[x]]\circ_{\mathrm{h}}[[y_1]]=[[x']_{\mathrm{h}}\circ_{\mathrm{h}}[y_1]_{\mathrm{h}}]=[[x']_{\mathrm{h}}\circ_{\mathrm{h}}[y]_{\mathrm{h}}]=[[\gamma
d^1_{\mathrm{h}}]],
\end{equation}
for $\gamma:\Delta[2,1]\to K$ any bisimplex with $\gamma
d^2_{\mathrm{h}}=x'$ and $\gamma d^0_{\mathrm{h}}=y$. Now, as
$[y_0]_{\mathrm{v}}=[y]_{\mathrm{v}}$, we can select a vertical
homotopy $\delta:\Delta[1,2]\to K$ from $y$ to $y_0$, and then a
bisimplex $\beta_0:\Delta[1,2]\to K$ making commutative the diagram
$$
\xymatrix@C=75pt@R=16pt{\Lambda^{\!1,2}[1,2]\ar[r]^-{(\delta
d^1_{\mathrm{h}},-;x'd^0_{\mathrm{v}}s^0_{\mathrm{v}},\, x', -)}\ar
@{^{(}->}[d]& K. \\ \Delta[1,2]\ar[ru]_-{\beta_0}}
$$
This bisimplex $\beta_0$ becomes a vertical homotopy from $x'$ to
$x'_0\!\!:=\beta_0d^2_{\mathrm{v}}$, and this $x'_0$ verifies that
$x'_0d^0_{\mathrm{h}}=y_0d^1_{\mathrm{h}}$. Hence,
\begin{equation}\label{eq4}
[[x]]\circ_{\mathrm{h}}[[y_0]]=[[x'_0]_{\mathrm{h}}\circ_{\mathrm{h}}[y_0]_{\mathrm{h}}].
\end{equation}
But, by taking $\theta :\Delta[2,2]\to K$ any bisimplex solving the
extension problem
$$
\xymatrix@C=75pt@R=16pt{\Lambda^{\!1,2}[2,2]\ar[r]^-{(\delta,-,\,
\beta_0 ; \gamma d^0_{\mathrm{v}}s^0_{\mathrm{v}},\, \gamma, -)}\ar
@{^{(}->}[d]& K, \\ \Delta[2,2]\ar[ru]_-\theta }
$$
we obtain a bisimplex $\gamma_0\!\!:=\theta
d^2_{\mathrm{v}}:\Delta[2,1]\to K$ satisfying that
$\gamma_0d^0_{\mathrm{h}} = y_0$ and $\gamma_0
d^2_{\mathrm{h}}=x'_0$, whence
$$
[[x]]\circ_{\mathrm{h}}[[y_0]]=[[\gamma_0 d^1_{\mathrm{h}}]].
$$
As the bisimplex $\theta d^1_{\mathrm{h}}:\Delta[1,2]\to K$ is
easily recognized to be a vertical homotopy from $\gamma
d^1_{\mathrm{h}}$ to $\gamma_0d^1_{\mathrm{h}}$, we conclude
$[[\gamma d^1_{\mathrm{h}}]]=[[\gamma_0d^1_{\mathrm{h}}]]$. Consequently,
the required equality
$$[[x]]\circ_{\mathrm{h}}[[y_0]]=[[x]]\circ_{\mathrm{h}}[[y_1]]$$
follows by comparing (\ref{eq3})  with (\ref{eq4}).
\end{proof}

Simply by exchanging the horizontal and vertical directions in the foregoing discussion, we also have a
well-defined vertical composition of squares $[[x]]$ and $[[y]]$ in $\adj K$, whenever
$[xd^0_{\mathrm{v}}]_{\mathrm{h}}=[yd^1_{\mathrm{v}}]_{\mathrm{h}}$, which is given by
\begin{equation}\label{compv}\begin{array}{lll}
[[x]]\circ_{\mathrm{v}} [[y]]=[[x']_{\mathrm{v}}\circ_{\mathrm{v}}[y]_{\mathrm{v}}]& \text{ if }&
[x]_{\mathrm{h}}=[x']_{\mathrm{h}} \text{ and } x'd^0_{\mathrm{v}}=yd^1_{\mathrm{v}},\end{array}
\end{equation}
where $[x']_{\mathrm{v}}\circ_{\mathrm{v}}[y]_{\mathrm{v}}$ is the composite in the fundamental groupoid $\mathrm{P}
K_{1,*}$, that is,
\begin{equation}
[[x]]\circ_{\mathrm{v}}[[y]] = [[\gamma d^1_{\mathrm{v}}]]
\end{equation}
for $\gamma : \Delta[1,2]\to K$ any bisimplex with $\gamma d^2_{\mathrm{v}}=x'$ and $\gamma d^0_{\mathrm{v}}=y$.

\begin{theorem}
$\adj K$ is a double groupoid satisfying the filling condition.
\end{theorem}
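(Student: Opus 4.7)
The plan is to verify in turn: (i) the four underlying categories of $\adj K$ are groupoids; (ii) Axioms 1--3 for a double category hold; and (iii) the filling condition is satisfied. The main tools are the extension condition on $K$, the vanishing of $\pi_2(K_{*,0},a)$ and $\pi_2(K_{0,*},a)$ at every vertex, and Lemma \ref{p5}, which already guarantees that the source and target edges of a square $[[x]]$ are well defined. By construction the horizontal morphism groupoid is $\mathrm{P} K_{*,0}$ and the vertical morphism groupoid is $\mathrm{P} K_{0,*}$. For the horizontal category of squares, I would observe that the assignment $[x]_{\mathrm{h}}\mapsto [[x]]$ is a surjective functor from $\mathrm{P} K_{*,1}$ onto it---surjectivity on arrows is precisely Lemma \ref{lad1}---that sends composition in $\mathrm{P} K_{*,1}$ to the composition law (\ref{comp}), the identity on $u$ to $[[us^0_{\mathrm{h}}]]$, and inverses to horizontal inverses. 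Being the image of a groupoid under such a functor, it inherits a groupoid structure; the vertical case is entirely symmetric.

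Axioms 1(i)--(ii) and 2(i)--(ii) are forced by the simplicial identities together with the well-definedness statement of Lemma \ref{p5}: each face map on squares is a functor with respect to both compositions simply because, at the level of bisimplex representatives, the operators $d^i_{\mathrm{h}}, d^j_{\mathrm{v}}, s^i_{\mathrm{h}}, s^j_{\mathrm{v}}$ commute with each other. Axiom 1(iii) reduces to the identity $[[as^0_{\mathrm{h}}s^0_{\mathrm{v}}]] = [[as^0_{\mathrm{v}}s^0_{\mathrm{h}}]]$ for any vertex $a$, which is immediate in any bisimplicial set. Axiom 2(iii) holds because the degeneracy of a composite in $\mathrm{P} K_{*,0}$ (resp. $\mathrm{P} K_{0,*}$) is realized by a composite of degenerate bisimplices, using Lemma \ref{lad1} to harmonize representatives. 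The filling condition is likewise immediate: a horizontal morphism $g:\Delta[1,0]\to K$ and a vertical morphism $u:\Delta[0,1]\to K$ with $gd^1_{\mathrm{h}} = ud^0_{\mathrm{v}}$ define a bisimplicial map $\Lambda^{1,0}[1,1]\to K$, which the extension condition extends to some $\alpha:\Delta[1,1]\to K$; then $[[\alpha]]$ solves the given filling problem, and the three remaining variants in Lemma \ref{fc} are handled by the other three horns $\Lambda^{k,l}[1,1]$.

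The main obstacle is Axiom 3, the interchange law. Given a $2\times 2$ array of squares $[[x]],[[y]],[[z]],[[w]]$ with matching boundaries, the two expressions $([[x]]\circ_{\mathrm{h}}[[y]])\circ_{\mathrm{v}}([[z]]\circ_{\mathrm{h}}[[w]])$ and $([[x]]\circ_{\mathrm{v}}[[z]])\circ_{\mathrm{h}}([[y]]\circ_{\mathrm{v}}[[w]])$ are produced by independent choices of horn-fillers in $\Delta[2,1]$ and $\Delta[1,2]$ followed by a further filler in each case, and these choices in general yield distinct bisimplices of $K$. The strategy is first to invoke Lemmas \ref{p3} and \ref{lad1} to replace the four given bisimplices by bihomotopic representatives whose relevant faces agree strictly, so that the compositions can be performed coherently; then to construct, via the bisimplicial extension condition applied successively at bidegrees $(2,1)$, $(1,2)$, and $(2,2)$, a bisimplex $\Gamma:\Delta[2,2]\to K$ whose central face $\Gamma d^1_{\mathrm{h}}d^1_{\mathrm{v}}$ simultaneously represents both composites; the equality of bihomotopy classes then reads off. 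The coherence bookkeeping is the delicate step, and is precisely where the vanishing of $\pi_2(K_{*,0},a)$ and $\pi_2(K_{0,*},a)$ is used, in the same spirit as in the proof of Lemma \ref{p3}, to freely exchange horizontal and vertical homotopies during the successive extensions.
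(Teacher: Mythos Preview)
Your outline follows the paper's proof quite closely: groupoid structures via the surjection from $\mathrm{P}K_{*,1}$ (resp.\ $\mathrm{P}K_{1,*}$), Axioms 1--2 from simplicial identities, the filling condition from the $(1,1)$-horn extension, and the interchange law by first normalising representatives via Lemma~\ref{lad1} and then building a $\Delta[2,2]$-filler whose central face witnesses both composites at once. That is exactly what the paper does.

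There is, however, a misconception in your final paragraph. You assert that the vanishing of $\pi_2(K_{*,0},a)$ and $\pi_2(K_{0,*},a)$ is ``precisely'' what is needed in the interchange argument. It is not. In the paper's proof of Axiom~3---and in your own sketch if you carry it out---the only ingredients are repeated use of Lemma~\ref{lad1} (which relies solely on the extension condition) to align the four representatives strictly along their shared edges, followed by horn-fillings for $\Lambda^{1,1}[1,2]$, $\Lambda^{1,1}[2,1]$, and $\Lambda^{1,1}[2,2]$. No appeal to $\pi_2$-vanishing occurs. The hypothesis $\pi_2(K_{*,0},a)=0=\pi_2(K_{0,*},a)$ is consumed entirely in the \emph{well-definedness} of the horizontal and vertical compositions (the lemma immediately following Lemma~\ref{lad1}), via Fact~\ref{p2}; by the time you reach the present theorem that work is already done. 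If you go looking for a place to invoke $\pi_2$-vanishing during the interchange verification you will not find one, and the attempt may lead you to over-complicate what is in fact a clean horn-filling argument.
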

\begin{proof}
We first observe that, with both defined horizontal and vertical compositions, the squares in $\adj K$ form groupoids.
The associativity for the horizontal composition of squares in $\adj K$ follows from the associativity of the
composition of morphisms in the fundamental groupoid $\mathrm{P} K_{\ast,1}$. In effect, let $[[x]],\,[[y]]$ and
$[[z]]$ be three horizontally composable squares in $\adj K$. By changing representatives if  necessary, we can
assume that $xd^0_{\mathrm{h}}=yd^1_{\mathrm{h}}$ and $yd^0_{\mathrm{h}}=zd^1_{\mathrm{h}}$. Then,
$$\begin{array}{lllll}
[[x]]\circ_{\mathrm{h}}\left([[y]]\circ_{\mathrm{h}}[[z]]\right) &=&
[[x]]\circ_{\mathrm{h}}\left[[y]_{\mathrm{h}}\circ_{\mathrm{h}}[z]_{\mathrm{h}}\right] &=&
\left[[x]_{\mathrm{h}}\circ_{\mathrm{h}}\left([y]_{\mathrm{h}}\circ_{\mathrm{h}}[z]_{\mathrm{h}}\right)\right]\\
&=&[([x]_{\mathrm{h}}\circ_{\mathrm{h}}[y]_{\mathrm{h}})\circ_{\mathrm{h}}[z]_{\mathrm{h}}]&=&
[[x]_{\mathrm{h}}\circ_{\mathrm{h}}[y]_{\mathrm{h}}]\circ_{\mathrm{h}}[[z]]\\&=&
([[x]]\circ_{\mathrm{h}}[[y]])\circ_{\mathrm{h}}[[z]]. \end{array}$$

The horizontal identity square on the vertical morphism represented by a bisimplex $u\!:\!\Delta[0,1]\to K$  is
\begin{equation}
\mathrm{I}^\mathrm{h}[u]_{\mathrm{v}}=[[us^0_{\mathrm{h}}]]
\end{equation}
(recall Lemma \ref{p5}), as  can be easily deduced from the fact that $[us^0_{\mathrm{h}}]_{\mathrm{h}}$ is the
identity morphism on $u$ in the groupoid $\mathrm{P}K_{\ast,1}$. Thus, for example, for any $x:\Delta[1,1]\to K$,
$$
[[x]]\circ_{\mathrm{h}}\mathrm{I}^\mathrm{h}[xd^0_{\mathrm{h}}]_{\mathrm{v}}=[[x]_{\mathrm{h}}\circ_{\mathrm{h}}
[xd^0_{\mathrm{h}}s^0_{\mathrm{h}}]_\mathrm{h}]=[[x]_{\mathrm{h}}]=[[x]].
$$

The horizontal inverse in $\adj K$ of a square $[[x]]$ is
$[[x]]^{\text{-}1_{\mathrm{h}}}=[[x]^{\text{-}1}_{\mathrm{h}}]$, where $[x]^{\text{-}1}_{\mathrm{h}}$ is the inverse of
$[x]_{\mathrm{h}}$ in $\mathrm{P}K_{\ast,1}$, as  is easy to verify:
$$
[[x]]\circ_{\mathrm{h}}[[x]^{\text{-}1}_{\mathrm{h}}]=[[x]_{\mathrm{h}}\circ_{\mathrm{h}}[x]^{\text{-}1}_{\mathrm{h}}]
=[[xd^1_{\mathrm{h}}s^0_{\mathrm{h}}]]=\mathrm{I}^\mathrm{h}[xd^1_{\mathrm{h}}]_{\mathrm{v}}.
$$

Similarly, we see that the associativity for the vertical composition of squares in $\adj K$ follows from the
associativity of the composition in the fundamental groupoid $\mathrm{P}K_{1,\ast})$,  that the vertical identity square on
the horizontal morphism represented by a bisimplex $f:\Delta[1,0]\to K$  is
$\mathrm{I}^v([f]_{\mathrm{h}})=[[fs^0_{\mathrm{v}}]]$, and that the vertical inverse in $\adj K$ of a square $[[x]]$
is $[[x]^{-1}_{\mathrm{v}}]$, where $[x]^{\text{-}1}_{\mathrm{v}}$ denotes the inverse of $[x]_{\mathrm{v}}$ in
$\mathrm{P} K_{1,\ast}$.

We are now ready to prove that $\adj K$ is actually a double groupoid. {\bf Axiom 1} is easily verified. Thus, for
example, given any $x:\Delta[1,1]\to K$,
$$
\mathrm{s}^\mathrm{h}\mathrm{s}^\mathrm{v}[[x]]=\mathrm{s}^\mathrm{h}[xd^0_{\mathrm{v}}]_{\mathrm{h}}
=xd^0_{\mathrm{v}}d^0_{\mathrm{h}}=xd^0_{\mathrm{h}}d^0_{\mathrm{v}}=\mathrm{s}^\mathrm{v}
[xd^0_{\mathrm{h}}]_{\mathrm{v}}=\mathrm{s}^\mathrm{v}\mathrm{s}^\mathrm{h}[[x]],
$$
or, given any $f:\Delta[1,0]\to K$,
$$
\mathrm{s}^\mathrm{h}\mathrm{I}^\mathrm{v}[f]_{\mathrm{h}}=\mathrm{s}^\mathrm{h}[[fs^0_{\mathrm{v}}]]=
[fs^0_{\mathrm{v}}d^0_{\mathrm{h}}]_{\mathrm{v}}=[fd^0_{\mathrm{h}}s^0_{\mathrm{v}}]_{\mathrm{v}}=
\mathrm{I}^\mathrm{v}fd^0_{\mathrm{h}}= \mathrm{I}^\mathrm{v} \mathrm{s}^\mathrm{h}[f]_{\mathrm{h}},
$$
and so on. Also, for any $a:\Delta[0,0]\to K$,
$$
\mathrm{I}^\mathrm{h}\mathrm{I}^\mathrm{v}a=\mathrm{I}^\mathrm{h}[as^0_{\mathrm{v}}]_{\mathrm{v}}=
[[as^0_{\mathrm{v}}s^0_{\mathrm{h}}]]=[[as^0_{\mathrm{h}}s^0_{\mathrm{v}}]]=
\mathrm{I}^\mathrm{v}[as^0_{\mathrm{h}}]_{\mathrm{h}}= \mathrm{I}^\mathrm{v}\mathrm{I}^\mathrm{h}a.
$$

For {\bf Axiom 2} (i), let  $[[x]]$ and $[[y]]$ be two horizontally composable squares in $\adj K$. We can assume that
$xd^0_{\mathrm{h}}=yd^1_{\mathrm{h}}$, and then $[[x]]\circ_{\mathrm{h}}[[y]]= [[\gamma d^1_{\mathrm{h}}]]$, for any
$\gamma :\Delta[2,1]\to K$ with $\gamma d^2_{\mathrm{h}}=x$ and $\gamma d^0_{\mathrm{h}}=y$. Hence,
$$\begin{array}{lll} \mathrm{s}^\mathrm{v}([[x]]\circ_{\mathrm{h}}[[y]])&=&[\gamma d^1_{\mathrm{h}}
d^0_{\mathrm{v}}]_{\mathrm{h}}=[\gamma d^0_{\mathrm{v}}d^1_{\mathrm{h}}]=[\gamma
d^0_{\mathrm{v}}d^2_{\mathrm{h}}]_{\mathrm{h}} \circ_{\mathrm{h}} [\gamma
d^0_{\mathrm{v}}d^0_{\mathrm{h}}]_{\mathrm{h}}\\[5pt]&=&[\gamma
d^2_{\mathrm{h}}d^0_{\mathrm{v}}]_{\mathrm{h}}\circ_{\mathrm{h}}[\gamma
d^0_{\mathrm{h}}d^0_{\mathrm{v}}]=[xd^0_{\mathrm{v}}]_{\mathrm{h}}\circ_{\mathrm{h}}[yd^0_{\mathrm{v}}]_{\mathrm{h}}
=\mathrm{s}^\mathrm{v}[[x]]\circ_\mathrm{h}
\mathrm{s}^\mathrm{v}[[y]],\\[6pt]\mathrm{t}^\mathrm{v}([[x]]\circ_{\mathrm{h}}[[y]])&=&[\gamma d^1_{\mathrm{h}}
d^1_{\mathrm{v}}]_{\mathrm{h}}=[\gamma d^1_{\mathrm{v}}d^1_{\mathrm{h}}]_{\mathrm{h}}=[\gamma
d^1_{\mathrm{v}}d^2_{\mathrm{h}}]_{\mathrm{h}}\circ_{\mathrm{h}} [\gamma
d^1_{\mathrm{v}}d^0_{\mathrm{h}}]_{\mathrm{h}}\\[5pt]&=&[\gamma
d^2_{\mathrm{h}}d^1_{\mathrm{v}}]_{\mathrm{h}}\circ_{\mathrm{h}}[\gamma
d^0_{\mathrm{h}}d^1_{\mathrm{v}}]_{\mathrm{h}}=[xd^1_{\mathrm{v}}]_{\mathrm{h}}\circ_{\mathrm{h}}
[yd^1_{\mathrm{v}}]_{\mathrm{h}}=\mathrm{t}^\mathrm{v}[[x]]\circ_{\mathrm{h}} \mathrm{t}^\mathrm{v}[[y]].
\end{array}
$$

{\bf Axiom 2} (ii) is proved analogously, and for (iii), let $f,\,f':\Delta[1,0]\to K$ be maps with
$fd^0_{\mathrm{h}}=f'd^1_{\mathrm{h}}$. Then, $[f]_{\mathrm{h}}\circ_{\mathrm{h}}[f']_{\mathrm{h}}=[\gamma
d^1_{\mathrm{h}}]_{\mathrm{h}}$, for $\gamma:\Delta[2,0]\to K$ any bisimplex with $\gamma d^2_{\mathrm{h}}=f$ and
$\gamma d^0_{\mathrm{h}}=f'$, and we have the equalities:
$$
\mathrm{I}^\mathrm{v}([f]_{\mathrm{h}}\circ_{\mathrm{h}}[f']_{\mathrm{h}}) = \mathrm{I}^\mathrm{v}[\gamma
d^1_{\mathrm{h}}]_{\mathrm{h}}=[[\gamma d^1_{\mathrm{h}}s^0_{\mathrm{v}}]] = [[\gamma s^0_{\mathrm{v}}
d^1_{\mathrm{h}}]]=[[\gamma s^0_{\mathrm{v}}d^2_{\mathrm{h}}]]\circ_\mathrm{h} [[\gamma
s^0_{\mathrm{v}}d^0_{\mathrm{h}}]]=
\mathrm{I}^\mathrm{v}[f]_{\mathrm{h}}\circ_{\mathrm{h}}\mathrm{I}^\mathrm{v}[f']_{\mathrm{h}}.
$$
And similarly one sees that
$\mathrm{I}^\mathrm{h}([u]_{\mathrm{v}}\circ_{\mathrm{v}}[u']_{\mathrm{v}})=\mathrm{I}^\mathrm{h}
[u]_{\mathrm{v}}\circ_{\mathrm{v}}\mathrm{I}^\mathrm{h}[u']_{\mathrm{v}}$ for any $u,\,u':\Delta[0,1]\to K$ with
$ud^0_{\mathrm{v}}=u'd^1_{\mathrm{v}}$.

To verify {\bf Axiom 3}, that is, to prove that the interchange law holds in $\adj K$, let
$$
\xymatrix@C=20pt@R=18pt{\cdot \ar @{}[dr]|{\textstyle [[x]]} & \cdot \ar[l]\ar @{}[dr]|{\textstyle[[x']]} & \cdot\ar[l]\\
\cdot \ar[u]\ar @{}[dr]|{\textstyle[[y]]} & \cdot \ar[l]\ar[u]\ar @{}[dr]|{\textstyle[[y']]} & \cdot\ar[u]\ar[l] \\
\cdot \ar[u] & \cdot \ar[l]\ar[u] & \cdot \ar[l]\ar[u]}
$$
be squares in $\adj K$. By an iterated use of Lemma \ref{lad1} (and its corresponding version for vertical direction),
we can assume that $xd^0_{\mathrm{h}}=x'd^1_{\mathrm{h}},\, xd^0_{\mathrm{v}}=yd^1_{\mathrm{v}},\,
x'd^0_{\mathrm{v}}=y'd^1_{\mathrm{v}}$ and $yd^0_{\mathrm{h}}=y'd^1_{\mathrm{h}}$. Let $\alpha:\Delta[2,1]\to K$ and
$\beta:\Delta[1,2]\to K$ be bisimplicial maps such that $\alpha d^2_{\mathrm{h}} =y,\, \alpha d^0_{\mathrm{h}} = y',\,
\beta d^2_{\mathrm{v}}=x'$ and $\beta d^0_{\mathrm{v}}=y'$; therefore, $[[y]]\circ_{\mathrm{h}}[[y']]=[[\alpha
d^1_{\mathrm{h}}]]$ and $[[x']]\circ_{\mathrm{v}}[[y']]=[[\beta d^1_{\mathrm{v}}]]$. Now we select bisimplices
$\gamma:\Delta[1,2]\to K$ and $\delta:\Delta[2,1]\to K$ as respective solutions to the following extension problems:
$$
\xymatrix@C=50pt@R=16pt{\Lambda^{\!1,1}[1,2]\ar@{^{(}->}[d]\ar[r]^-{(\beta d^1_{\mathrm{h}},-;y,-,x)} & K\\
                \Delta[1,2]\ar @{.>}[ur]_-{\gamma}}\hspace{20pt}
\xymatrix@C=50pt@R=16pt{\Lambda^{\!1,1}[2,1]\ar @{^{(}->}[d]\ar[r]^-{(x',-, x;\,\alpha d^1_{\mathrm{v}},-)} & K\\
                \Delta[2,1]\ar @{.>}[ur]_-{\delta}}
$$
Then $[[x]]\circ_{\mathrm{v}}[[y]]=[[\gamma d^1_{\mathrm{v}}]],\, [[x]]\circ_{\mathrm{h}}[[x']]=[[\delta
d^1_{\mathrm{h}}]]$ and, moreover, we can find a bisimplex $\theta:\Delta[2,2]\to K$ making the triangle
below commutative.
$$
\xymatrix@C=50pt@R=16pt{\Lambda^{\!1,1}[2,2]\ar @{^{(}->}[d]\ar[r]^-{(\beta ,-,\gamma;\alpha,-, \delta)} & K\\
                \Delta[2,2]\ar @{.>}[ur]_-{\theta}}
$$
Letting $\phi=\theta d^1_{\mathrm{h}}:\Delta[1,2]\to K$ and $\psi=\theta d^1_{\mathrm{v}}:\Delta[2,1]\to K$, we have
the equalities:
$$
\begin{array}{ll}
 \phi d^2_{\mathrm{v}}=\theta d^2_{\mathrm{v}}d^1_{\mathrm{h}}=\delta d^1_{\mathrm{h}},
 & \phi d^0_{\mathrm{v}}=\theta d^0_{\mathrm{v}}d^1_{\mathrm{h}}=\alpha d^1_{\mathrm{h}},\\[5pt]
 \psi d^2_{\mathrm{h}}=\theta d^2_{\mathrm{h}}d^1_{\mathrm{v}}=\gamma d^1_{\mathrm{v}},
 & \psi d^0_{\mathrm{h}}=\theta d^0_{\mathrm{h}}d^1_{\mathrm{v}}=\beta d^1_{\mathrm{v}},
\end{array}$$
whence,
$$
\begin{matrix}
 ([[x]]\circ_{\mathrm{h}}[[x']])\circ_{\mathrm{v}}([[y]]\circ_{\mathrm{h}}[[y']])=[[\delta d^1_{\mathrm{h}}]]\circ_{\mathrm{v}}[[\alpha d^1_{\mathrm{h}}]]= [[\phi
 d^1_{\mathrm{v}}]],\\[5pt]
 ([[x]]\circ_{\mathrm{v}}[[y]])\circ_{\mathrm{h}}([[x']]\circ_{\mathrm{h}}[[y']])=[[\gamma d^1_{\mathrm{v}}]]\circ_{\mathrm{h}}[[\beta d^1_{\mathrm{v}}]]= [[\psi d^1_{\mathrm{h}}]].
\end{matrix}
$$
Since $\phi d^1_{\mathrm{v}}=\theta d^1_{\mathrm{h}} d^1_{\mathrm{v}}=\theta d^1_{\mathrm{v}}d^1_{\mathrm{h}}=\psi
d^1_{\mathrm{h}}$, the interchange law follows.

Thus, $\adj K$ is a double groupoid and, moreover, it satisfies the filling condition: given morphisms
$$
\xymatrix@C=18pt@R=12pt{\cdot & \cdot \ar[l]_(0.4){[g]_{\mathrm{h}}} \\ & \cdot \ar[u]_-{[u]_{\mathrm{v}}}}
$$
represented by bisimplices $u:\Delta[0,1]\to K$ and $g:\Delta[1,0]\to K$ with $gd^0_{\mathrm{h}}=ud^1_{\mathrm{v}}$, if
$x:\Delta[1,1]\to K$ is any solution to the extension problem
$$
\xymatrix@C=50pt@R=12pt{\Lambda^{\!0,1}[1,1]\ar @{^{(}->}[d]\ar[r]^-{(-,\,g;\,u,-)} & K \\
\Delta[1,1]\ar @{.>}[ur]_-x}
$$
then the bihomotopy class of $x$ is a square in $\adj K$, \parbox{30pt}{
\xymatrix@C=18pt@R=14pt{\cdot\ar @{}[dr]|{\textstyle [[x]]} & \cdot \ar[l]_-{[g]_{\mathrm{h}}} \\
                                        \cdot\ar[u] & \cdot\ar[l] \ar[u]_-{[u]_{\mathrm{v}}}}},
as required.
\end{proof}

The construction of the double groupoid $\adj K$ is clearly functorial on $K$, and we have the following:

\begin{theorem} \label{t1} The double nerve construction, $\mathcal{G} \mapsto \dn\mathcal{G}$, embeds, as a
reflexive subcategory, the category of double groupoids satisfying
the filling condition into the category of those bisimplicial sets
$K$ that satisfy the extension condition and such that
$\pi_2(K_{\ast,0},a)=0=\pi_2(K_{0,\ast},a)$ for all vertices $a\in
K_{0,0}$. The reflector functor for such bisimplicial sets is given
by the above described homotopy double groupoid construction $$
K\mapsto \adj K.$$ Thus, $\adj \dn\mathcal{G}=\mathcal{G}$, and
there are natural bisimplicial maps
\begin{equation}
\epsilon(K):K\to \dn\adj K,
\end{equation}
such that $\adj \epsilon =\mathrm{id}$ and $\epsilon \dn=\mathrm{id}$.
\end{theorem}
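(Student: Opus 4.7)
The plan is to prove the four assertions in sequence, exploiting the tight control over homotopy classes in nerves of groupoids. First I would show $\adj \dn \mathcal{G} = \mathcal{G}$ for any double groupoid $\mathcal{G}$ with filling condition. Because $\dn\mathcal{G}_{*,0}$ and $\dn\mathcal{G}_{0,*}$ are nerves of groupoids, horn-fillers in them are unique and consequently two 1-simplices with equal endpoints are homotopic if and only if equal; hence horizontal and vertical morphisms of $\adj \dn \mathcal{G}$ match those of $\mathcal{G}$. Similarly, $\dn\mathcal{G}_{1,*}$ and $\dn\mathcal{G}_{*,1}$ are nerves of the horizontal and vertical groupoids of squares, so horizontal and vertical homotopy classes of $(1,1)$-bisimplices reduce to equality, and therefore bihomotopy classes of squares are singletons. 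The composition laws and identities in $\adj \dn \mathcal{G}$ then coincide with those of $\mathcal{G}$: the composition witnesses required in the definition of $\adj$, namely $(2,1)$- and $(1,2)$-bisimplices of $\dn \mathcal{G}$, are precisely horizontally or vertically composable pairs of squares in $\mathcal{G}$ whose middle face is the actual composite.

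Second, I would construct the natural bisimplicial map $\epsilon(K) : K \to \dn \adj K$. Given a bisimplex $x : \Delta[p,q] \to K$, I define $\epsilon(K)(x)$ to be the configuration of squares in $\adj K$ whose vertex at $(i,r)$ is the corresponding vertex of $K$, whose horizontal and vertical edges are the horizontal, resp.\ vertical, homotopy classes of the corresponding $(1,0)$- or $(0,1)$-subsimplex of $x$, and whose square at $(i,j;r,s)$ is the bihomotopy class of the corresponding $(1,1)$-subsimplex of $x$. To confirm that this data assembles into a double functor $[p]\otimes[q]\to\adj K$ (equivalently, a bisimplex of $\dn\adj K$), the horizontal composition axiom $F^{r,s}_{i,j} \circ_{\mathrm{h}} F^{r,s}_{j,k} = F^{r,s}_{i,k}$ is witnessed directly by the $(2,1)$-subsimplex of $x$ spanning columns $i,j,k$ and rows $r,s$, serving as the defining 2-simplex in the formula for $\circ_{\mathrm{h}}$ in $\adj K$; the vertical axiom is witnessed analogously by a $(1,2)$-subsimplex, and the identity axioms $F^{r,s}_{i,i} = \mathrm{I}^{\mathrm{h}} F^{r,s}_i$ and $F^{r,r}_{i,j} = \mathrm{I}^{\mathrm{v}} F^r_{i,j}$ come from degenerate subsimplices via the formulas $\mathrm{I}^{\mathrm{h}}[u]_{\mathrm{v}} = [[u s^0_{\mathrm{h}}]]$ and $\mathrm{I}^{\mathrm{v}}[f]_{\mathrm{h}} = [[f s^0_{\mathrm{v}}]]$. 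Naturality of $\epsilon$ in $K$ and its compatibility with the bisimplicial face and degeneracy operators are immediate from the construction.

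Third, I would verify the two triangular identities. A $(p,q)$-bisimplex of $\dn \mathcal{G}$ is by definition a double functor $[p]\otimes[q] \to \mathcal{G}$, and since by the first step all homotopy classes in $\dn \mathcal{G}$ are singletons, $\epsilon(\dn \mathcal{G})$ returns this very double functor under the identification $\adj \dn \mathcal{G} = \mathcal{G}$; so $\epsilon \dn = \mathrm{id}$. Dually, applying $\adj$ to $\epsilon(K)$ and using $\adj \dn \adj K = \adj K$, direct inspection shows that a morphism or square of $\adj K$, viewed through $\epsilon$ as a homotopy class of a subsimplex of $\dn \adj K$, returns precisely itself; hence $\adj \epsilon = \mathrm{id}$. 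The identities $\adj \dn = \mathrm{id}$, $\epsilon \dn = \mathrm{id}$, and $\adj \epsilon = \mathrm{id}$ are then the triangle equations of an adjunction $\adj \dashv \dn$ with identity counit, exhibiting $\dn$ as a fully faithful reflective embedding.

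The principal obstacle will be the well-definedness of $\epsilon(K)(x)$ as a double functor in Step 2, namely the verification of the horizontal and vertical composition axioms: composition in $\adj K$ is defined via Lemma \ref{lad1} after aligning representatives, and one must check that the $(2,1)$- and $(1,2)$-subsimplices of $x$ are already aligned (for instance, $x^{r,s}_{i,j}d^0_{\mathrm{h}} = x^{r,s}_{j,k}d^1_{\mathrm{h}}$ holds tautologically since both equal the $(0,1)$-subsimplex of $x$ at column $j$ and rows $r,s$), so that no realignment is needed and the formula for $\circ_{\mathrm{h}}$ directly yields the bihomotopy class $[[x^{r,s}_{i,k}]]$ as required.
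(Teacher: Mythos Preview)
Your proposal is correct and follows essentially the same route as the paper's proof: both argue that bihomotopy in $\dn\mathcal{G}$ is trivial because the relevant row and column simplicial sets are nerves of groupoids (hence have unique horn-fillers), construct $\epsilon(K)$ by sending a $(p,q)$-bisimplex to the array of bihomotopy classes of its $(1,1)$-subsimplices (the paper writes out the iterated face maps explicitly), and then verify the two triangle identities $\adj\epsilon=\mathrm{id}$ and $\epsilon\dn=\mathrm{id}$ by direct inspection. Your identification of the ``principal obstacle''---that the $(2,1)$- and $(1,2)$-subsimplices of $x$ already have matching boundaries, so no realignment via Lemma~\ref{lad1} is needed---is exactly the point the paper suppresses under ``a straightforward verification shows''.
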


\begin{proof} From Theorem \ref{thfc}(ii), if $\mathcal{G}$ is any double groupoid satisfying the filling
condition, then its double nerve $\dn\mathcal{G}$ satisfies the
extension condition and, since both simplicial sets
$\dn\mathcal{G}_{\ast,0}$ and $\dn\mathcal{G}_{0,\ast}$ are nerves
of groupoids, all homotopy groups $\pi_2(\dn\mathcal{G}_{\ast,0},a)$
and $\pi_2(\dn\mathcal{G}_{0,\ast},a)$ vanish.  Moreover, since the
bihomotopy relation is trivial on the bisimplices $\Delta[p,q]\to
\dn\mathcal{G}$, for $p\geq 1$ or $q\geq 1$, it is easy to see that
 $\adj
\dn\mathcal{G}=\mathcal{G}$.

For any bisimplicial set $K$ in the hypothesis of the theorem, there is
a natural bisimplicial map
$$
\epsilon =\epsilon(K):K\to \dn\adj K,
$$
that takes a bisimplex $x:\Delta[p,q]\to K$, of $K$, to the
bisimplex $\epsilon x:[p]\otimes[q]\to\adj K$, of $\dn\adj K$,
defined by the $p\times q$ configuration of squares in $\adj K$
$$
\xymatrix@C=20pt@R=18pt{\ar@{-}@/_1pc/@<1.5pc>[d]&\epsilon_i^rx\ar
@{}[rd]|{\textstyle \epsilon_{i,j}^{r,s}x} &  \epsilon_j^r
\ar[l]_-{\textstyle \epsilon_{i,j}^rx}&
\ar@{-}@/^0.9pc/@<-1.5pc>[d]^(0.7){\hspace{-5pt}\scriptsize
\begin{array}{l}0\leq i\leq j\leq p\\[-1pt] 0\leq r\leq s\leq q\end{array},}\\ &
\epsilon_i^sx\ar[u]^-{\textstyle \epsilon_i^{r,s}x}  & \epsilon_j^sx
\ar[l]^-{\textstyle \epsilon_{i,j}^sx} \ar[u]_-{\textstyle
\epsilon_j^{r,s}x}&}
$$
where
$$
\begin{array}{ccl}
 \epsilon^{r,s}_{i,j}x & = & [[xd^p_{\mathrm{h}}\cdots d^{j+1}_{\mathrm{h}}d^{j-1}_{\mathrm{h}}\cdots
  d^{i+1}_{\mathrm{h}}
 d^{i-1}_{\mathrm{h}}\cdots d^0_{\mathrm{h}}d^q_{\mathrm{v}}\cdots d^{s+1}_{\mathrm{v}}d^{s-1}_{\mathrm{v}}
 \cdots
 d^{r+1}_{\mathrm{v}}d^{r-1}_{\mathrm{v}}\cdots
 d_{\mathrm{v}}^0]],\\[4pt]
 \epsilon^{r,s}_jx & = & [xd^p_{\mathrm{h}}\cdots d^{j+1}_{\mathrm{h}}d^{j-1}_{\mathrm{h}}\cdots d^0_{\mathrm{h}}
 d^q_{\mathrm{v}}\cdots d^{s+1}_{\mathrm{v}}d^{s-1}_{\mathrm{v}}\cdots d^{r+1}_{\mathrm{v}}d^{r-1}_{\mathrm{v}}\cdots
  d^0_{\mathrm{v}}]_{\mathrm{v}},\\[5pt]
 \epsilon^r_{i,j}x & = & [xd^p_{\mathrm{h}}\cdots d^{j+1}_{\mathrm{h}}d^{j-1}_{\mathrm{h}}\cdots d^{i+1}_{\mathrm{h}}
 d^{i-1}_{\mathrm{h}}\cdots d^0_{\mathrm{h}}d^q_{\mathrm{v}}\cdots d^{r+1}_{\mathrm{v}}d^{r-1}_{\mathrm{v}}
 \cdots d^0_{\mathrm{v}}]_{\mathrm{h}},\\[5pt]
 \epsilon^r_ix& = & xd^p_{\mathrm{h}}\cdots d^{i+1}_{\mathrm{h}}d^{i-1}_{\mathrm{h}}\cdots d^0_{\mathrm{h}}
 d^q_{\mathrm{v}}\cdots d^{r+1}_{\mathrm{v}}d^{r-1}_{\mathrm{v}}\cdots d^0_{\mathrm{v}}.
\end{array}
$$

Since a straightforward verification shows that $\adj \epsilon(K)$
is the identity map on $\adj K$, for any $K$, and $\epsilon
(\dn\mathcal{G})$ is the identity map on $\dn\mathcal{G}$, for any
double groupoid $\mathcal{G}$, it follows that $\dn$ is right
adjoint to $\adj$, with $\epsilon$ and the identity being the unit
and the counit of the adjunction respectively.
\end{proof}

With the next theorem we show that the double groupoid $\adj K$
represents the same homotopy 2-type as the bisimplicial set $K$.

\begin{theorem}\label{4.5} Let $K$ be any bisimplicial set satisfying the extension condition and such that
$\pi_2(K_{0,\ast},a)\!=\!0\!=\!\pi_2(K_{\ast,0},a)$ for all base
vertices $a$. Then, the induced map by unit of the adjunction
$|\epsilon|: |K|\to |\dn\adj K|\!=\!|\adj K|$ is a weak homotopy
$2$-equivalence.
\end{theorem}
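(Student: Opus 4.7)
The plan is to verify directly that $|\epsilon|$ induces bijections on $\pi_i$ for $i=0,1,2$, working entirely at the simplicial level. Since $\adj K$ is a double groupoid with the filling condition, Theorem~\ref{th0} gives natural isomorphisms $\pi_i(\adj K,a)\cong\pi_i(|\adj K|,|a|)$ for all $i\geq 0$; in particular $\pi_i(|\adj K|)=0$ for $i\geq 3$, so a 2-equivalence is what is needed. On the source side, $K$ satisfies the extension condition, so by Facts~\ref{f18}(4) the codiagonal $\w K$ is a Kan complex, and by Facts~\ref{f18}(1) there is a homotopy equivalence $|K|\simeq|\w K|$. Under this equivalence, the map $|\epsilon|$ corresponds (up to homotopy) to $|\w\epsilon|:|\w K|\to|\w\dn\adj K|$ between realizations of Kan complexes, so by Fact~\ref{f1}(2) it suffices to show that $\w\epsilon$ induces bijections on $\pi_i$ for $i=0,1,2$.

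For $\pi_0$, a 1-simplex of $\w K$ is an L-shape $(u,g)$ of a vertical and a horizontal bisimplex of $K$ sharing a middle vertex, and two vertices are in the same component of $\w K$ exactly when they are connected in $\adj K$ in the sense of Subsection~\ref{s21}; the filling condition in $\adj K$ makes these identifications automatic. For $\pi_1$, a 1-simplex of $\w K$ based at $a$ is a pair $(u,g)$ of matching vertical and horizontal loops at $a$, which is precisely an element of $\mathcal{G}(a)$ for $\mathcal{G}=\adj K$. I would use Lemma~\ref{lad1} to realize simplicial composition of such 1-simplices by the product defining $\pi_1(\adj K,a)$, and I would check that the simplicial homotopy relation on 1-simplices of $\w K$ coincides with the equivalence relation $\sim$ on $\mathcal{G}(a)$; in each case the relevant 2-simplex of $\w K$ is built from a triple $(x_0,x_1,x_2)$ whose middle component is exactly the square witnessing the $\sim$-relation.

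For $\pi_2$, given a 2-simplex $(x_0,x_1,x_2)\in\w K_2$ all of whose three faces are the degenerate 1-simplex $as^0$, unpacking the formulas for $\w$ forces $x_0\in K_{0,2}$ to have its two ``outer'' vertical faces degenerate at $a$, $x_2\in K_{2,0}$ similarly, and $x_1:\Delta[1,1]\to K$ to have $d^0_h x_1$ and $d^1_v x_1$ degenerate. Invoking the hypothesis $\pi_2(K_{0,\ast},a)=0$ through Fact~\ref{p2}, one can extend to find a 3-simplex of $\w K$ homotoping $x_0$ to the doubly degenerate $as^0_v s^0_v$; symmetrically, $\pi_2(K_{\ast,0},a)=0$ lets one homotope $x_2$ to $as^0_h s^0_h$. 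In this normal form the 2-simplex is determined by $x_1$, a square of $K$ with fully degenerate boundary, and the assignment $[(x_0,x_1,x_2)]\mapsto[[x_1]]$ lands in $\pi_2(\adj K,a)$. The inverse bijection is obvious. To see well-definedness one repeats the normalization on a simplicial homotopy between two normalized 2-simplices and extracts, through Facts~\ref{p1} and~\ref{p2}, horizontal and vertical homotopies between the corresponding $x_1$'s whose composite is a bihomotopy.

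The main obstacle is the $\pi_2$ step: turning an arbitrary triple $(x_0,x_1,x_2)$ into one with $x_0,x_2$ fully degenerate, and promoting a homotopy of normalized 2-simplices in $\w K$ into a genuine bihomotopy in $K$. Both operations are extension problems whose boundaries are degenerate everywhere except on a single vertical (resp.\ horizontal) face, so the obstructions lie exactly in $\pi_2(K_{0,\ast},a)$ and $\pi_2(K_{\ast,0},a)$; the hypothesis that these groups vanish is precisely what makes the whole argument go through, mirroring the way the groupoid property was used in the proof of Theorem~\ref{thfc}.
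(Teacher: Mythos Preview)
Your proposal is correct and follows essentially the same route as the paper: reduce via Facts~\ref{f18}(1),(4) to the simplicial map $\w\epsilon:\w K\to\w\dn\adj K$ between Kan complexes, then verify bijectivity on $\pi_0,\pi_1,\pi_2$ by direct inspection of low-dimensional simplices of $\w K$, invoking Lemma~\ref{lad1} to normalize representatives and using the vanishing of $\pi_2(K_{0,*})$ and $\pi_2(K_{*,0})$ via Fact~\ref{p2} precisely where the paper does. One small terminological slip worth flagging: a based $1$-simplex of $\w K$ is a pair $(u,g)$ of \emph{actual} bisimplices, whereas an element of $\mathcal{G}(a)$ for $\mathcal{G}=\adj K$ is a pair of \emph{homotopy classes} $([g]_{\mathrm{h}},[u]_{\mathrm{v}})$; you have a natural surjection, not an identification, and the substance of the $\pi_1$ step is that this surjection descends to a bijection after passing to the respective equivalence relations.
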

\begin{proof} By Facts \ref{f18} (1) and (3) and Theorem \ref{thfc},
the map $|\epsilon|:|K|\to |\dn\adj K|$ is, up to natural homotopy
equivalences, induced by the simplicial map $\w\epsilon:\w K\to
\w\dn\adj K$, where both $\w K$ and $\w \dn\adj K$ are
Kan-complexes.

At dimension 0, we have the equalities $\w K_0=K_{0,0}=\w\dn\adj
K_0$, and the map $\w\epsilon$ is the identity on 0-simplices. At
dimension 1, the map $$\w\epsilon: (x_{0,1},x_{1,0})\mapsto
([x_{0,1}]_{\mathrm{v}},[x_{1,0}]_{\mathrm{h}}),$$ is clearly
surjective, whence we conclude that the induced
$$
\pi_0\w\epsilon :\pi_0\w K\to \pi_0\w\dn\adj
K\!\overset{(\ref{i-i})}\cong\!\pi_0\adj K
$$
is a bijection and also that, for any vertex $a\in K_{0,0}$, that
induced on the $\pi_1$-groups
$$
\pi_1\w\epsilon :\pi_1(\w K,a)\to \pi_1(\w\dn\adj
K,a)\!\overset{(\ref{i-i})}\cong\!\pi_1(\adj K,a)
$$
is surjective. To see that $\pi_1\w\epsilon$ is actually an
isomorphism, suppose that $(x_{0,1},x_{1,0})\in \w K_1$, with
$x_{0,1}d^1_{\mathrm{v}}=a=x_{1,0}d^0_{\mathrm{h}}$, represents an
element in the kernel of $\pi_1\w\epsilon$. This implies the
existence of a bisimplex $x:\Delta[1,1]\to K$ whose bihomotopy class
is a square in $\adj K$ with boundary as in
$$
\xymatrix@C=14pt@R=14pt{a\ar @{}[dr]|{\textstyle [[x]]} &
a\ar@<-0.1ex>@{-}[l]\ar@<0.1ex>@{-}[l]_-{[a
s^0_{\mathrm{h}}]_{\mathrm{h}}} \\
\cdot \ar[u]^-{[x_{0,1}]_{\mathrm{v}}} &
a\ar[l]^-{[x_{1,0}]_{\mathrm{h}}}\ar@<-0.1ex>@{-}[u]\ar@<0.1ex>@{-}[u]
_-{[as^0_{\mathrm{v}}]_{\mathrm{v}}}}
$$
Using  Lemma \ref{lad1} twice (one in each direction), we can find a
bisimplex $x_{1,1}:\Delta[1,1]\to K$, such that $[[x_{1,1}]]=[[x]]$,
$x_{1,1}d^1_{\mathrm{v}}=as^0_{\mathrm{h}}$, and
$x_{1,1}d^0_{\mathrm{h}}=as^0_{\mathrm{v}}$. Moreover, since
$[x_{1,1}d^0_{\mathrm{v}}]_{\mathrm{h}}=[x_{1,0}]_{\mathrm{h}}$ and
$[x_{1,1}d^1_{\mathrm{h}}]_{\mathrm{v}}=[x_{0,1}]_{\mathrm{v}}$,
there are bisimplices $x_{2,0}:\Delta[2,0]\to K$ and
$x_{0,2}:\Delta[0,2]\to K$, with faces as in the picture
$$
\xymatrix@C=26pt@R=20pt{a &
a\ar[l]_-{as^0_{\mathrm{v}}}\ar@{}[rd]|{\textstyle x_{1,1}} &
a\ar[l]_-{as^0_{\mathrm{h}}} \\
& \cdot \ar@{}@<3pt>[lu]_(0.5){\textstyle
x_{0,2}}\ar[lu]\ar@{}@<-3pt>[lu]^-{x_{0,1}}\ar[u]
& a\ar[l]\ar[u]_-{as^0_{\mathrm{v}}} \\
& & a\ar[lu]\ar@{}@<3pt>[lu]_-{\textstyle
x_{2,0}}\ar@{}@<-3pt>[lu]^-{x_{1,0}} \ar[u]_-{as^0_{\mathrm{h}}}}
$$
This amounts to saying that the triplet $(x_{0,2},x_{1,1},x_{2,0})$
is a 2-simplex of $\w K$ which is a homotopy from
$(x_{0,1},x_{1,0})$ to $(as^0_{\mathrm{v}},as^0_{\mathrm{v}})$.
Then, $(x_{0,1},x_{1,0})$ represents the identity element of the
group $\pi_1(\w K,a)$. This proves that $\pi_1\w\epsilon$ is an
isomorphism.

Let us now analyze the homomorphism $$ \pi_2\w\epsilon :\pi_2(\w
K,a)\to \pi_2(\w\dn\adj K,a)\!\overset{(\ref{i-i})}\cong\!\pi_2(\adj
K,a).$$ An element of $\pi_2(\adj K,a)$ is a square in $\adj K$ of
the form
$$
\xymatrix@C=14pt@R=14pt{ a\ar @{}[dr]|{\textstyle [[x]]} & a \ar[l]_-{[as^0_{\mathrm{h}}]_{\mathrm{h}}} \\
                                        a \ar[u]^-{[as^0_{\mathrm{v}}]_{\mathrm{v}}} & a\ar[l]^-{[as^0_{\mathrm{h}}]_{\mathrm{h}}}\ar[u]_-{[as^0_{\mathrm{v}}]_{\mathrm{v}}}}
$$
and the homomorphism $\pi_2\w\epsilon$ is induced by the mapping
$$
\begin{array}{c}
\xymatrix@C=26pt@R=20pt{a & a\ar[l]_-{as^0_{\mathrm{v}}}\ar
@{}[rd]|{\textstyle x_{1,1}} &
a\ar[l]_-{as^0_{\mathrm{h}}} \\
& \ar@{}@<3pt>[lu]_(0.5){\textstyle
x_{0,2}}\ar[lu]\ar@{}@<-3pt>[lu]^-{as^0_{\mathrm{v}}}\ar[u]a & a\ar[l]\ar[u]_-{as^0_{\mathrm{v}}} \\
                                & & a\ar[lu]\ar@{}@<3pt>[lu]_-{\textstyle
x_{2,0}}\ar@{}@<-3pt>[lu]^-{as^0_{\mathrm{h}}}
\ar[u]_-{as^0_{\mathrm{h}}}}
\end{array} \longmapsto [[x_{1,1}]].
$$
That $\pi_2\w\epsilon$ is surjective is proven using a parallel
argument to that given previously for proving that $\pi_1\w\epsilon$
is injective (given $[[x]]$, using  Lemma \ref{lad1} twice, we can
find $x_{1,1}:\Delta[1,1]\to K$, etc.). To prove that
$\pi_2\w\epsilon$ is also injective, suppose
$(x_{0,2},x_{1,1},x_{2,0})$ as above, representing an element of
$\pi_2(\w K,a)$ into the kernel of $\pi_2\w\epsilon$, that is, such
that $[[x_{1,1}]]=[[as^0_{\mathrm{h}}s^0_{\mathrm{v}}]]$. Then,
there is a bisimplex $y:\Delta[1,1]\to K$ such that
$[x_{1,1}]_{\mathrm{v}}=[y]_{\mathrm{v}}$ and
$[y]_{\mathrm{h}}=[as^0_{\mathrm{h}}s^0_{\mathrm{v}}]_{\mathrm{h}}$,
whence we can find bisimplices $\alpha':\Delta[1,2]\to K$ and
$\beta':\Delta[2,1]\to K$ such that
$$
\begin{array}{llllll}
   \alpha'd^0_{\mathrm{v}}=yd^0_{\mathrm{v}}s^0_{\mathrm{v}}, & \alpha'd^1_{\mathrm{v}}=y, &
    \alpha'd^2_{\mathrm{v}}=x_{1,1},&
   \beta'd^0_{\mathrm{h}}=as^0_{\mathrm{h}}s^0_{\mathrm{v}}, & \beta' d^1_{\mathrm{h}}=as^0_{\mathrm{h}}s^0_{\mathrm{v}}, & \beta' d^2_{\mathrm{h}}=y.
\end{array}
$$
Let us now choose $\theta:\Delta[2,2]\to K$ and
$\theta':\Delta[1,3]\to K$ as respective solutions to the following
extension problems
$$
\xymatrix@C=120pt@R=15pt{\Lambda^{\!2,0}[2,2]\ar
@{^{(}->}[d]\ar[r]^-{(as^0_{\mathrm{h}}s^0_{\mathrm{v}}
s^0_{\mathrm{v}},\, as^0_{\mathrm{h}}s^0_{\mathrm{v}}s^0_{\mathrm{v}},-;\,-, \beta'd^1_{\mathrm{v}}s^0_{\mathrm{v}},\,\beta')} & K \\
                        \Delta[2,2]\ar@{.>}[ru]_-{\theta}}
\xymatrix@C=90pt@R=15pt{\Delta[1]\otimes \Lambda^{\!2}[3] \ar
@{^{(}->}[d]
\ar[r]^-{(\theta d^2_{\mathrm{h}}d^0_{\mathrm{v}}s^0_{\mathrm{v}},\,\theta d^2_{\mathrm{h}},-,\alpha')} & K \\
  \Delta[1,3]\ar@{.>}[ru]_-{\theta'}}
$$
Then, for $\alpha =\theta'd^2_{\mathrm{v}}:\Delta[1,2]\to K$ and
$\beta = \theta d^0_{\mathrm{v}}:\Delta[2,1]\to K$, we have the
equalities
\begin{equation}\label{rar}
\begin{array}{lllll}
    \alpha d^0_{\mathrm{v}}=\beta d^2_{\mathrm{h}}, & \alpha d^1_{\mathrm{v}}=
    as^0_{\mathrm{h}}s^0_{\mathrm{v}},
    & \alpha d^2_{\mathrm{v}}=x_{1,1},&
    \beta d^0_{\mathrm{h}}=as^0_{\mathrm{h}}s^0_{\mathrm{v}},
    & \beta d^1_{\mathrm{h}}=as^0_{\mathrm{h}}s^0_{\mathrm{v}}.
\end{array}
\end{equation}

By
Lemma \ref{p2}, as the 2$^{\text {nd}}$ homotopy groups of $K_{0,\ast}$ vanish and
both bisimplices $\alpha d^0_{\mathrm{h}}$ and
$as^0_{\mathrm{v}}s^0_{\mathrm{v}}$ have the same vertical faces, there is a vertical homotopy
$\omega:\Delta[0,3]\to K $ from $as^0_{\mathrm{v}}s^0_{\mathrm{v}}$
to $\alpha d^0_{\mathrm{h}}$. And similarly, since $\beta
d^1_{\mathrm{v}}$ and $x_{2,0}$ have the same horizontal faces and
the 2$^{\text {nd}}$ homotopy groups of $K_{\ast,0}$ are all
trivial, there is a horizontal homotopy, say $\omega':\Delta[3,0]\to
K$,  from $\beta d^1_{\mathrm{v}}$ to $x_{2,0}$. Now, let
$\Gamma:\Delta[1,3]\to K$ and $\Gamma':\Delta[3,1]\to K$ be
bisimplices solving, respectively, the extension problems
$$
\xymatrix@C=105pt@R=15pt{\Lambda^{\!1,2}[1,3]\ar@{^{(}->}[d]\ar[r]^-{(\omega,-;\,\alpha
d^0_{\mathrm{v}}s^1_{\mathrm{v}},\,as^0_{\mathrm{v}}s^0_{\mathrm{v}}s^0_{\mathrm{h}},-,\, \alpha)} & K \\
                        \Delta[1,3]\ar@{.>}[ru]_-{\Gamma}}\hspace{0.3cm}
\xymatrix@C=120pt@R=15pt{\Lambda^{\!3,0}[3,1]\ar
@{^{(}->}[d]\ar[r]^-{(as^0_{\mathrm{v}}
s^0_{\mathrm{h}}s^0_{\mathrm{h}},\,
as^0_{\mathrm{v}}s^0_{\mathrm{h}}s^0_{\mathrm{h}},\,\beta,-;-,
\, \omega')} & K \\
                        \Delta[3,1]\ar@{.>}[ru]_-{\Gamma'}}
$$
and take $x_{1,2}=\Gamma d^2_{\mathrm{v}}:\Delta[1,2]\to K$ and
$x_{2,1}=\Gamma'd^3_{\mathrm{h}}:\Delta[2,1]\to K$. Then, the same
equalities as in (\ref{rar})  hold for $x_{1,2}$ instead of $\alpha$
and $x_{2,1}$ instead of $\beta$, and moreover
$x_{1,2}d^0_{\mathrm{h}}=as^0_{\mathrm{v}}s^0_{\mathrm{v}}$ and
$x_{2,1}d^1_{\mathrm{v}}=x_{2,0}$. Finally, by taking
$x_{0,3}\!:\!\Delta[0,3]\to K$ any bisimplex with
$x_{0,3}d^0_{\mathrm{v}}=x_{1,2}d^1_{\mathrm{h}},\,
x_{0,3}d^1_{\mathrm{v}}=as^0_{\mathrm{v}}s^0_{\mathrm{v}},\,
x_{0,3}d^2_{\mathrm{v}}=as^0_{\mathrm{v}}s^0_{\mathrm{v}}$ and
$x_{0,3}d^3_{\mathrm{v}}=x_{0,2}$, and $x_{3,0}\!:\!\Delta[3,0]\to
K$ any horizontal homotopy from $as^0_{\mathrm{h}}s^0_{\mathrm{h}}$
to $x_{2,1}d^0_{\mathrm{v}}$ (which exist thanks to Lemma \ref{p2}),
we have the 3-simplex $(x_{0,3},x_{1,2},x_{2,1},x_{3,0})$ of $\w K$,
which is easily recognized as a homotopy from
$(as^0_{\mathrm{v}}s^0_{\mathrm{v}},as^0_{\mathrm{h}}s^0_{\mathrm{v}},
as^0_{\mathrm{h}}s^0_{\mathrm{h}})$ to $(x_{0,2},x_{1,1},x_{2,0})$.
Consequently, $(x_{0,2},x_{1,1},x_{2,0})$ represents the identity of the
group $\pi_2 (\w K,a)$. Therefore, $\pi_2\w\epsilon$ is an isomorphism,
and the proof is complete \end{proof}

\section{The equivalence of homotopy categories}

Recall that the category of weak homotopy types is defined to be the
localization of the category of topological spaces with respect to
the class of weak equivalences, and the {\em category of homotopy
$2$-types}, hereafter denoted by $\mathrm{Ho(\mathbf{
2\text{-}types)}}$, is its full subcategory given by those spaces
$X$ with $\pi_i(X,a)=0$ for any integer $i\!>2$ and any base point
$a$.

We now define the {\em homotopy category of double groupoids
satisfying the filling condition}, denoted by
$\mathrm{Ho(\mathbf{DG}_{fc})}$, to be the localization of the
category $\mathrm{\mathbf{DG}_{fc}}$, of these double groupoids,
with respect to the class of weak equivalences, as  defined
in Subsection \ref{we}.

By Corollaries \ref{we2} and \ref{we1}, both the geometric
realization functor $\mathcal{G}\mapsto |\mathcal{G}|$ and the
homotopy double groupoid funtor $X \mapsto \dpi X$ induce equally
denoted functors
\begin{equation}\label{igr}|\hspace{7pt}|:\mathrm{Ho(\mathbf{DG}_{fc})}\to
\mathrm{Ho(\mathbf{ 2\text{-}types)}},\end{equation}
 \begin{equation}\label{idpi}\dpi:\mathrm{Ho(\mathbf{
2\text{-}types)}}\to \mathrm{Ho(\mathbf{DG}_{fc})}.\end{equation}

One of the  main goals in this section is to prove the following:

\begin{theorem}\label{mth1}  Both induced functors $(\ref{igr})$ and
$(\ref{idpi})$ are mutually quasi-inverse, establishing an
equivalence of categories
$$\mathrm{Ho(\mathbf{DG}_{fc})}\simeq\mathrm{Ho(\mathbf{
2\text{-}types)}}.$$
\end{theorem}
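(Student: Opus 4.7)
The plan is to verify that both functors descend to the stated homotopy categories, and then exhibit natural weak equivalences witnessing that the two compositions are isomorphic to the identity, using the adjunction $\adj\dashv\dn$ of Theorem~\ref{t1} together with the weak $2$-equivalence $|K|\to |\adj K|$ of Theorem~\ref{4.5} as the main engine. Well-definedness is immediate: Corollary~\ref{we2} gives $|\cdot|\colon\mathrm{Ho(\mathbf{DG}_{fc})}\to\mathrm{Ho(\mathbf{Top})}$, Theorem~\ref{th0} places its image in $\mathrm{Ho(\mathbf{2\text{-}types)}}$ since $\pi_i(\mathcal{G},a)=0$ for $i\geq 3$ by definition, and Corollary~\ref{we1}, together with the fact that a weak equivalence between $2$-types is automatically a weak homotopy $2$-equivalence, shows that $\dpi$ is well-defined on $\mathrm{Ho(\mathbf{2\text{-}types)}}$.

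To produce a natural weak equivalence $X\simeq |\dpi X|$ for each $2$-type $X$, I would route through bisimplicial sets. Starting from the Kan complex $\s X$ (Fact~\ref{f1}(1)), form the bisimplicial set $\dec\s X$, which satisfies the extension condition by Fact~\ref{f18}(5); moreover, since $\dec\s X_{0,\ast}$ and $\dec\s X_{\ast,0}$ are shifts of $\s X$, their $\pi_2$ groups compute $\pi_3 X$ and hence vanish by the hypothesis that $X$ is a $2$-type. Theorem~\ref{4.5} then yields a natural weak $2$-equivalence $|\dec\s X|\to |\adj\dec\s X|$. Combining this with the natural weak equivalences $X\simeq |\s X|\simeq |\dec\s X|$ supplied by Facts~\ref{f1}(6) and \ref{f18}(3) produces a natural zigzag from $X$ to $|\adj\dec\s X|$ in $\mathrm{Ho(\mathbf{2\text{-}types)}}$. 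It remains to construct a natural double functor $\adj\dec\s X\to \dpi X$---sending a $(1,1)$-bisimplex of $\dec\s X$, i.e.\ a singular $3$-simplex $\Delta_3\to X$, to the bihomotopy class of the square $I^2\to X$ it determines after a canonical collapse---and to verify, via Theorem~\ref{ps}, that this double functor is a weak equivalence.

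For the dual composition, given $\mathcal{G}$, apply the above construction to $X=|\mathcal{G}|$: by Theorem~\ref{t1} one has $\adj\dn\mathcal{G}=\mathcal{G}$ and the unit $\epsilon(\dn\mathcal{G})$ is the identity on $\dn\mathcal{G}$, so the natural zigzag $\dpi|\mathcal{G}|\simeq \adj\dec\s|\mathcal{G}|$ fits into a sequence ending at $\adj\dn\mathcal{G}=\mathcal{G}$, provided one produces a canonical bisimplicial map $\dec\s|\mathcal{G}|\to \dn\mathcal{G}$ whose image under $\adj$ is a weak equivalence; this last point follows from the fact that domain and codomain both realise the same $2$-type and that $\adj$, restricted to the reflexive subcategory identified in Theorem~\ref{t1}, preserves weak $2$-equivalences. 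I expect the main obstacle to be precisely the comparison $\adj\dec\s X\simeq \dpi X$: both realise the homotopy $2$-type of $X$ by design, but one is assembled from singular simplices through the bisimplicial reflector using the filler combinatorics of Lemma~\ref{lad1} and Section~\ref{lad}, while the other is defined directly from relative homotopy classes of maps $I^2\to X$ using the explicit pasting formulas of Section~\ref{dgt}; reconciling these two combinatorial descriptions and doing so functorially in $X$ is where the argument demands the greatest care.
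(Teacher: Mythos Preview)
Your overall strategy coincides with the paper's: route through the bisimplicial reflector $\adj$, compare $\dpi X$ with $\adj\dec\s X$, and use Theorem~\ref{4.5} together with the homotopy equivalences in Facts~\ref{f1} and~\ref{f18}. You also correctly identify the genuinely delicate point, namely the explicit natural weak equivalence $\eta:\adj\dec\s X\to\dpi X$; this is exactly what the paper constructs (with explicit formulas and several relative homotopies) in its final proposition.

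Two corrections are needed. First, your justification that $\pi_2(\dec\s X_{\ast,0})$ and $\pi_2(\dec\s X_{0,\ast})$ vanish is wrong: these simplicial sets are not ``shifts whose $\pi_2$ computes $\pi_3X$''; they are contractible for \emph{any} Kan complex $L=\s X$, because the augmented simplicial sets $\dec L_{\ast,0}\to L_0$ and $\dec L_{0,\ast}\to L_0$ carry simplicial contractions given by the last and first degeneracies of $L$. The conclusion you need holds, but unconditionally and for a different reason.

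Second, and more structurally, your handling of the counit direction has a gap. You ask for a canonical bisimplicial map $\dec\s|\mathcal{G}|\to\dn\mathcal{G}$, but no such map is supplied by the constructions at hand: by adjunction this would amount to a natural simplicial map $\s|\mathcal{G}|\to\w\dn\mathcal{G}$, and the available comparisons ($\dia\dn\mathcal{G}\to\s|\mathcal{G}|$ and $\dia\dn\mathcal{G}\to\w\dn\mathcal{G}$) point the wrong way. The paper avoids this by organizing the argument as a genuine adjunction $\adj\dec\dashv\w\dn$ between $\mathbf{DG}_{\mathrm{fc}}$ and Kan complexes: the counit $\adj\dec\w\dn\mathcal{G}\to\mathcal{G}$ is then $\adj$ applied to the honest counit $\mathrm{v}:\dec\w\dn\mathcal{G}\to\dn\mathcal{G}$ of $\dec\dashv\w$, which exists for free and is a weak equivalence by Fact~\ref{f18}(3) and Theorem~\ref{4.5}. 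Only afterwards does the paper pass to spaces via $|\cdot|\dashv\s$ and then replace $\adj\dec\s$ by $\dpi$ using $\eta$. In short: work with $\w\dn\mathcal{G}$ rather than $\s|\mathcal{G}|$ for the double-groupoid side of the equivalence, and the missing map materializes as an adjunction counit.
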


The proof of this Theorem \ref{mth1} is somewhat indirect.
Previously, we  shall establish the following result, where
$\mathbf{KC}$ is the category of Kan complexes and
$$\mathrm{Ho}(L\in \mathbf{KC} ~|~ \pi_iL=0,\,i>2)$$
is the full subcategory of the homotopy category of Kan complexes
given by those $L$ such that $\pi_i(L,a)=0$ for all $i>2$ and base
vertex $a\in L_0$:

\begin{theorem}\label{math2} There are adjoint functors,
$\w\dn:\mathrm{\mathbf{DG}_{fc}}\to \mathbf{KC}$, the right adjoint,
and $\adj\dec: \mathbf{KC}\to \mathrm{\mathbf{DG}_{fc}}$, the left
adjoint, that induce an equivalence of categories
$$\mathrm{Ho(\mathbf{DG}_{fc})}\simeq \mathrm{Ho}(L\in \mathbf{KC} ~|~ \pi_iL=0,\,i>2).$$
\end{theorem}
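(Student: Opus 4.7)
The plan is to obtain $\adj\dec \dashv \w\dn$ as the composite of the two known adjunctions $\dec \dashv \w$ from~(\ref{d-w}) and $\adj \dashv \dn$ from Theorem~\ref{t1}, and then to check that both functors preserve weak equivalences and that the unit/counit of the composite become isomorphisms after localizing to the appropriate homotopy categories. First I would verify the composite is well-defined. For $\mathcal{G} \in \mathbf{DG}_{fc}$, Theorem~\ref{thfc} ensures $\dn\mathcal{G}$ satisfies the extension condition, so $\w\dn\mathcal{G}$ is Kan by Fact~\ref{f18}(4); moreover, Fact~\ref{f18}(1) gives $|\w\dn\mathcal{G}| \simeq |\mathcal{G}|$, which together with Theorem~\ref{th0} yields $\pi_i(\w\dn\mathcal{G})=0$ for $i>2$, so $\w\dn$ factors through the subcategory of $2$-type Kan complexes. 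For a Kan complex $L$, Fact~\ref{f18}(5) gives that $\dec L$ satisfies the extension condition, while $(\dec L)_{0,*}$ and $(\dec L)_{*,0}$ are the classical upper and lower décalages of $L$, each of which possesses an extra degeneracy producing a simplicial deformation retract onto the discrete simplicial set of vertices; in particular their $\pi_2$ groups vanish, and Theorem~\ref{t1} applies to give $\adj\dec L \in \mathbf{DG}_{fc}$.

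Next I would check that both functors preserve weak equivalences, so that they descend to the homotopy categories. If $F:\mathcal{G}\to\mathcal{G}'$ is a weak equivalence in $\mathbf{DG}_{fc}$, then Corollary~\ref{we2} makes $|F|$ a homotopy equivalence, and the natural equivalence $|\w\dn K| \simeq |\dia\dn K| = |K|$ of Fact~\ref{f18}(1) implies $|\w\dn F|$ is a homotopy equivalence, whence $\w\dn F$ is a homotopy equivalence of Kan complexes by Fact~\ref{f1}. If $f:L\to L'$ is a weak equivalence of $2$-type Kan complexes, then $|\dec f|$ is a homotopy equivalence by Fact~\ref{f18}(2), and by naturality of the weak $2$-equivalence provided by Theorem~\ref{4.5} the map $|\adj\dec f|$ is itself a weak $2$-equivalence; since $\adj\dec L$ and $\adj\dec L'$ lie in $\mathbf{DG}_{fc}$, their realizations are already $2$-types, so $|\adj\dec f|$ is in fact a homotopy equivalence, and Corollary~\ref{we2} gives $\adj\dec f$ a weak equivalence in $\mathbf{DG}_{fc}$.

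Finally I would show the unit $\eta_L:L\to\w\dn\adj\dec L$ and counit $\epsilon_\mathcal{G}:\adj\dec\w\dn\mathcal{G}\to\mathcal{G}$ of the composite adjunction are weak equivalences on $2$-types and on $\mathbf{DG}_{fc}$ respectively. The unit factors as $L \xrightarrow{\mathrm{u}} \w\dec L \xrightarrow{\w\epsilon(\dec L)} \w\dn\adj\dec L$: the first arrow is a homotopy equivalence by Facts~\ref{f18}(3) and~\ref{f1}, and the second, after taking $|\cdot|$, is identified by Fact~\ref{f18}(1) with the weak $2$-equivalence $|\dec L|\to|\dn\adj\dec L|$ of Theorem~\ref{4.5}; for $L$ a $2$-type the target is a $2$-type as well, so this upgrades to a homotopy equivalence. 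For the counit, using $\adj\dn\mathcal{G}=\mathcal{G}$ from Theorem~\ref{t1}, I would assemble the chain
$$|\adj\dec\w\dn\mathcal{G}| \simeq_2 |\dec\w\dn\mathcal{G}| \simeq |\w\dec\w\dn\mathcal{G}| \simeq |\w\dn\mathcal{G}| \simeq |\mathcal{G}|$$
from Theorem~\ref{4.5}, Facts~\ref{f18}(1), (3) and (1) respectively, and verify by naturality that the composite represents $|\epsilon_\mathcal{G}|$; Corollary~\ref{we2} together with Theorem~\ref{th0} then give $\epsilon_\mathcal{G}$ a weak equivalence in $\mathbf{DG}_{fc}$. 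The main obstacle is this last naturality verification, tracking how the units and counits of the two constituent adjunctions and the strict identity $\adj\dn = \mathrm{id}$ combine to yield $\epsilon_\mathcal{G}$; once established, the standard formalism of an adjunction whose unit and counit are weak equivalences produces the claimed equivalence of homotopy categories.
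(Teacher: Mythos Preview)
Your proposal is correct and follows essentially the same approach as the paper: compose the two adjunctions, verify the functors land where they should and preserve weak equivalences, then check that unit and counit become equivalences on the relevant subcategories. The only notable difference is in the counit argument: the paper observes directly that, since the counit of $\adj\dashv\dn$ is the identity, the composite counit is exactly $\adj\mathrm{v}(\dn\mathcal{G})$, and then the naturality square for $\epsilon:K\to\dn\adj K$ (with $\epsilon(\dn\mathcal{G})=\mathrm{id}$ on the right) together with Fact~\ref{f18}(3), Theorem~\ref{4.5}, and Corollary~\ref{we2} finishes it in one stroke---this dissolves the ``main obstacle'' you flag, and is cleaner than routing through $|\w\dec\w\dn\mathcal{G}|$.
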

\begin{proof} The pair of adjoint functors $\adj\dec\dashv \w\dn$ is
obtained by composition of the pair of adjoint functors
$\dec\dashv\w$, recalled in (\ref{d-w}), with the pair of adjoint
functors $\adj\dashv \dn$, stated in Theorem \ref{t1}. For any
double groupoid $\mathcal{G}\in \mathrm{\mathbf{DG}_{fc}}$, its
double nerve $\dn\mathcal{G}$ satisfies the extension condition, by
Theorem \ref{thfc}, and therefore, by Fact \ref{f18} $(4)$, the
simplicial set $\w\dn \mathcal{G}$ is a Kan complex. Conversely, if $L$ is any Kan complex, then the bisimplicial set
$\dec L$ satisfies the extension condition by Fact \ref{f18} $(5)$
and, moreover, $\pi_2(\dec L_{\ast,0},a)\!=\!0\!=\!\pi_2(\dec
L_{0,\ast},a)$ for all vertices $a$, since both augmented simplicial
sets $\dec L_{*,0}\overset{d_0}\to L_0$ and $\dec
L_{0,*}\overset{d_1}\to L_0$ have  simplicial contractions,  given
respectively by the families of degeneracies $(s_p\!:\!L_p\to
L_{p\text{+}1})_{p\geq 0}$ and $(s_0\!:\!L_q\to
L_{q\text{+}1})_{q\geq 0}$. Therefore, in accordance  with Theorem
\ref{t1}, the composite functor $L\mapsto \adj \dec L$ is well
defined on Kan complexes.

By Fact \ref{f1} $(3)$, the homotopy equivalences in Fact \ref{f18}
(1), and Corollary \ref{we2}, it follows that a double functor
$F:\mathcal{G}\to\mathcal{G}'$, in $\mathrm{\mathbf{DG}_{fc}}$, is a
weak equivalence if and only if the induced simplicial map $\w\dn
F:\w\dn\mathcal{G}\to\w\dn\mathcal{G}'$ is a homotopy equivalence.

By Facts \ref{f1} $(3)$ and \ref{f18} $(2)$, Theorem \ref{4.5}, and Corollary \ref{we2},
if $f:L\to L'$ is any simplicial map between Kan complexes $L,L'$
such that $\pi_i(L,a)\!=\!0\!=\!\pi_i(L',a')$ for all $i\geq 3$ and
base vertices $a\in L_0$, $a'\in L'_0$, then $f$ is a homotopy
equivalence if and only if the induced $\adj\dec f:\adj\dec L\to
\adj\dec L'$ is a weak equivalence of double groupoids.

If $L$ is any Kan complex such that $\pi_i(L,a)=0$ for all $i\geq 3$
and all base vertices $a\in L_0$, then the unit of the adjunction
$L\to \w\dn\adj\dec L$ is a homotopy equivalence since it is the
composition of the simplicial maps
$$\xymatrix@C=17pt{L\ar[r]^-{\mathrm{u}}&\w\dec L\ar[rr]^-{\w\epsilon(\dec
L)}&& \w\dn\adj\dec L,}$$ where $u$ is a homotopy equivalence by
Fact \ref{f18}$(3)$ and Fact \ref{f1} $(3)$, and then
$\w\epsilon(\dec L)$ is also a homotopy equivalence by Theorem
\ref{4.5} and Fact \ref{f1} $(3)$.

Finally, the counit $\adj\mathrm{v}(\dn \mathcal{G}):
\adj\!\dec\w\dn\mathcal{G}\to \adj\dn\mathcal{G}=\mathcal{G}$, at
any double groupoid $\mathcal{G}$, is a weak equivalence, thanks to
Fact \ref{f18}$(3)$, Theorem \ref{4.5}, and Corollary \ref{we2}.
This makes the proof complete.
\end{proof}

Since, by Facts \ref{f1}, the adjoint pair of functors
$|\hspace{7pt}|\dashv \mathrm{S}:{\bf Top}\leftrightarrows
\mathbf{KC}$ induces mutually quasi-inverse equivalences of
categories
$$
\mathrm{Ho(\mathbf{ 2\text{-}types)}}\simeq \mathrm{Ho}(L\in
\mathbf{KC} ~|~ \pi_iL=0,\,i>2),
$$
the following follows from Theorem \ref{math2} above, and Fact \ref{f18}$(1)$:
\begin{theorem}\label{math3} The induced functor $(\ref{igr})$, $|\hspace{7pt}|:\mathrm{Ho(\mathbf{DG}_{fc})}\to
\mathrm{Ho(\mathbf{ 2\text{-}types)}}$, is an equivalence of
categories with a quasi-inverse the induced by the functor $X\mapsto
\adj\dec\,\mathrm{S}X$.
\end{theorem}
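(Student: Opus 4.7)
The plan is to assemble the claimed equivalence by composing two already-established adjunctions and then identifying the induced functors.

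First, I would invoke Facts \ref{f1}(5) and (6) to note that the adjoint pair $|\hspace{3pt}|\dashv \s$ descends to mutually quasi-inverse equivalences between $\mathrm{Ho}(\mathbf{Top})$ and $\mathrm{Ho}(\mathbf{KC})$. By Fact \ref{f1}(2), a Kan complex $L$ has $\pi_i(L,a)=0$ for $i>2$ if and only if $|L|$ has $\pi_i(|L|,|a|)=0$ for $i>2$, so these equivalences restrict to give
\[
\mathrm{Ho}(\mathbf{2\text{-}types}) \simeq \mathrm{Ho}(L\in\mathbf{KC}\mid\pi_iL=0,\,i>2),
\]
with quasi-inverse pair $|\hspace{3pt}|$ and $\s$. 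Theorem \ref{math2} already provides an equivalence
\[
\mathrm{Ho}(\mathbf{DG}_{fc}) \simeq \mathrm{Ho}(L\in\mathbf{KC}\mid\pi_iL=0,\,i>2)
\]
induced by the adjoint pair $\adj\dec\dashv \w\dn$. Composing gives an equivalence $\mathrm{Ho}(\mathbf{DG}_{fc})\simeq \mathrm{Ho}(\mathbf{2\text{-}types})$, whose quasi-inverse functor coming from $\mathbf{2\text{-}types}$ is precisely $X\mapsto \adj\dec\,\s X$, as claimed.

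The remaining step is to check that the functor induced on $\mathrm{Ho}(\mathbf{DG}_{fc})$ by this composition is in fact $\mathcal{G}\mapsto|\mathcal{G}|$, not just $\mathcal{G}\mapsto|\w\dn\mathcal{G}|$. This is where Fact \ref{f18}(1) enters: for any bisimplicial set $K$, the natural Alexander--Whitney map $\phi^*\!:\!\dia K\to \w K$ induces a homotopy equivalence $|\dia K|\simeq |\w K|$ on geometric realizations. Applying this to $K=\dn\mathcal{G}$ yields a natural homotopy equivalence
\[
|\mathcal{G}|\;=\;|\dia\dn\mathcal{G}|\;\simeq\;|\w\dn\mathcal{G}|,
\]
so the two functors agree in $\mathrm{Ho}(\mathbf{2\text{-}types})$. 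Moreover, Theorem \ref{th0} (or the proof of Theorem \ref{math2}) guarantees that $|\mathcal{G}|$ is indeed a $2$-type for $\mathcal{G}\in\mathbf{DG}_{fc}$, and the proof of Theorem \ref{math2} shows $\adj\dec\,\s X\in\mathbf{DG}_{fc}$, so the restrictions are all well defined.

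There is no serious obstacle to grind through; everything reduces to stacking equivalences already proved. The only point that requires any care is the identification in the previous paragraph: one must verify that the functor $|\hspace{3pt}|$ of the statement coincides, up to natural homotopy equivalence, with the composite $|\hspace{3pt}|\circ\w\dn$, so that the abstract quasi-inverse pair coming from Theorem \ref{math2} genuinely realizes the geometric realization of double groupoids. Once this identification is in hand via Fact \ref{f18}(1), the theorem follows immediately from composing the two equivalences.
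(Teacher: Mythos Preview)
Your proposal is correct and follows essentially the same approach as the paper: compose the equivalence from Theorem \ref{math2} with the equivalence $\mathrm{Ho}(\mathbf{2\text{-}types})\simeq\mathrm{Ho}(L\in\mathbf{KC}\mid\pi_iL=0,\,i>2)$ coming from Facts \ref{f1}, and then invoke Fact \ref{f18}(1) to identify $|\mathcal{G}|=|\dia\dn\mathcal{G}|$ with $|\w\dn\mathcal{G}|$. The paper states this more tersely, but the content is the same.
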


Theorem \ref{math3} gives half of Theorem \ref{mth1}. The remaining
part, that is, that the induced functor $(\ref{idpi})$ is a
quasi-inverse equivalence of $(\ref{igr})$, follows from the proposition below.
\begin{proposition} The two induced functors $\dpi,\adj\!\dec\,\mathrm{S} :\mathrm{Ho(\mathbf{
2\text{-}types)}}\to \mathrm{Ho(\mathbf{DG}_{fc})}$  are naturally
equivalent.
\end{proposition}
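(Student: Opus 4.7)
The plan is to invoke the uniqueness of quasi-inverses. By Theorem \ref{math3} the functor $|\;|:\mathrm{Ho}(\mathbf{DG}_{fc})\to\mathrm{Ho}(\mathbf{2\text{-}types})$ is an equivalence with quasi-inverse $\adj\dec\,\mathrm{S}$; if I can exhibit a natural double functor $\phi_X:\adj\dec\,\mathrm{S}X\to\dpi X$ which is a weak equivalence for every $X$, then composing $|\phi_X|$ with the natural equivalence $X\simeq|\adj\dec\,\mathrm{S}X|$ from Theorem \ref{math3} shows $|\;|\circ\dpi\cong\mathrm{id}$ on $\mathrm{Ho}(\mathbf{2\text{-}types})$. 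Hence $\dpi$ too is a quasi-inverse of $|\;|$, and the standard argument $\dpi\cong\adj\dec\,\mathrm{S}\circ|\;|\circ\dpi\cong\adj\dec\,\mathrm{S}$ gives the desired natural isomorphism.

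To construct $\phi_X$, I use the adjunction $\adj\dashv\dn$ of Theorem \ref{t1}: specifying such a natural double functor is equivalent to specifying a natural bisimplicial map $\tau_X:\dec\,\mathrm{S}X\to\dn\dpi X$. A $(p,q)$-bisimplex of $\dec\,\mathrm{S}X$ is a continuous $(p+q+1)$-simplex $\sigma:\Delta_{p+q+1}\to X$, and a $(p,q)$-bisimplex of $\dn\dpi X$ is a $p\times q$ configuration of squares in $\dpi X$. I define $\tau_X(\sigma)$ by placing at each position $(j,r)\in[0,p]\times[0,q]$ the path obtained by composing $\sigma$ with the affine parametrization of the edge $v_jv_{p+1+r}$ of $\Delta_{p+q+1}$; by taking for the horizontal and vertical morphisms of the grid the (necessarily unique) morphisms of $\dpi X$ between the prescribed source and target paths; and by representing the square at grid position $(j,r)$ by the map $I^2\to X$,
\[(x,y)\mapsto\sigma\bigl((1-x)(1-y)\,v_j+x(1-y)\,v_{p+1+r}+xy\,v_{j-1}+(1-x)y\,v_{p+r}\bigr),\]
i.e., the $\sigma$-image of the bilinear interpolation of the four vertices $v_{j-1},v_j,v_{p+r},v_{p+1+r}$ of the 3-simplex they span in $\Delta_{p+q+1}$. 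A direct evaluation on the four edges of $I^2$ confirms that this square has the boundary prescribed by the four indicated morphisms in $\dpi X$, and naturality of the formula under affine maps of simplices yields compatibility with all horizontal/vertical cofaces and codegeneracies, with the interchange and composition axioms for a double functor, and with maps $X\to X'$.

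To verify that $\phi_X$ is a weak equivalence, by Corollary \ref{we2} it suffices to show that $|\phi_X|$ induces isomorphisms on homotopy groups. Theorem \ref{th0} identifies $\pi_i(|\adj\dec\,\mathrm{S}X|,-)$ with $\pi_i(\adj\dec\,\mathrm{S}X,-)$ and $\pi_i(|\dpi X|,-)$ with $\pi_i(\dpi X,-)$; Theorem \ref{math3} gives $\pi_i(\adj\dec\,\mathrm{S}X,-)\cong\pi_i(X,-)$; Theorem \ref{ps} gives $\pi_i(\dpi X,-)\cong\pi_i(X,-)$ for $0\le i\le 2$, with both sides vanishing for $i\ge 3$. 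A careful unwinding of $\tau_X$ in low bidegrees---the identity on objects (both are paths in $X$), the natural comparison of based loops on $\pi_1$, and the bilinear parametrization of a $3$-simplex boundary on $\pi_2$---shows that $\pi_i(\phi_X)$ realizes exactly the composite of these two identifications, hence is an isomorphism; Whitehead's theorem then upgrades $|\phi_X|$ to a homotopy equivalence. The main obstacle is the careful but entirely routine geometric verification that the bilinear interpolation formula respects every bisimplicial operator of $\dn\dpi X$ together with all double-functor axioms, and the tracing of the $\pi_i$-identifications, which requires unpacking the explicit constructions in Theorems \ref{ps}, \ref{th0}, and \ref{t1}.
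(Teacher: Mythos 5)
Your construction is, at bottom, the paper's own: the bilinear interpolation in bidegree $(1,1)$ agrees exactly with the paper's formula $\eta_\alpha(x,y)=\alpha(xy,(1-x)(1-y),(1-x)y,x(1-y))$, your assignment on objects is the paper's $\eta_u(x)=u(1-x,x)$, and your $\pi_i$-comparison and uniqueness-of-quasi-inverse bookkeeping match the paper's. You merely present the construction as a natural bisimplicial map $\tau_X:\dec\,\mathrm{S}X\to\dn\dpi X$ and invoke the $\adj\dashv\dn$ adjunction of Theorem \ref{t1}, a harmless repackaging of the paper's direct definition of the double functor $\eta$.

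The one substantive gap is your assertion that ``naturality of the formula under affine maps of simplices yields compatibility with all horizontal/vertical cofaces and codegeneracies'' and with the double-functor axioms. That is not so. The inner face maps and the degeneracies of $\dn\dpi X$ invoke the horizontal and vertical compositions and the identity squares of $\dpi X$, and those are defined by the explicit piecewise-linear pasting formulas for $\circ_{\mathrm{h}}$, $\circ_{\mathrm{v}}$, $e^{\mathrm{h}}$, $e^{\mathrm{v}}$ --- not by bilinear interpolation. So, for example, the bilinear square your formula assigns to $\sigma d^i$ (for $0<i<p$) and the piecewise-pasted $\circ_{\mathrm{h}}$-composite of the two bilinear squares assigned to $\sigma$ are genuinely different maps $I^2\to X$; they coincide as squares of $\dpi X$ only because they are homotopic relative to the boundary, and one must actually produce those homotopies. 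The same applies to the degeneracy compatibilities, where the bilinear square on a degenerate simplex must be shown rel-boundary homotopic to $e^{\mathrm{h}}$ or $e^{\mathrm{v}}$. Exhibiting these homotopies is precisely what the paper's proof consists of ($F_1,F_2$ for well-definedness on squares, $H_1,\dots,H_4$ for compatibility with compositions and identities), so what you defer as ``routine geometric verification'' is not an invocation of naturality but the bulk of the argument.
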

\begin{proof} The proof consists in displaying a natural double functor
$$\eta:\adj\!\dec\mathrm{S}X\to\dpi X,$$ which is a weak equivalence
for any topological space $X$. This is as follows:

\vspace{0.2cm} On objects of $\adj\!\dec\mathrm{S}X$, the double
functor $\eta$  carries a continuous map ${u:\Delta_1\to X}$ to
the path $\eta_u:I\to X$ given by $\eta_u(x)=u(1-x,x)$.

\vspace{0.2cm}

On horizontal morphisms of $\adj\!\dec\mathrm{S}X$, $\eta$ acts by
$$\xymatrix@C=2pt{(gd^0\overset{[g]_\mathrm{h}\ \ }\longrightarrow
gd^1)&\overset{\eta}\mapsto& (\eta_{gd^0}\to \eta_{gd^1}),}$$ the
unique horizontal morphism in $\dpi X$ from the path $\eta_{gd^0}$
to the path $\eta_{gd^1}$, for any continuous map
${g\!:\!\Delta_2\to X}$. This correspondence is well defined since
$\eta_{gd^0}(1)=gd^0(0,1)=g(0,0,1)=gd^1(0,1)=\eta_{gd^1}(1)$, and,
moreover, if $[g]_\mathrm{h}=[g']_\mathrm{h}$ in $\dec\mathrm{S}X$,
then $gd^i=g'd^i$ for $i=0,1$. And, similarly, on vertical morphisms,
$\eta$ is given by
$$\xymatrix@C=2pt{(gd^1\overset{[g]_\mathrm{v}\ \ }\longrightarrow
gd^2)&\overset{\eta}\mapsto& (\eta_{gd^1}\to \eta_{gd^2}).}$$

\vspace{0.2cm} On squares in $\adj\!\dec\mathrm{S}X$, $\eta$ is
defined by
$$
\xymatrix@R=16pt@C=24pt{\cdot\ar@{}[dr]|{\textstyle [[\alpha]]} &
\cdot\ar[l]_{[\alpha d^3]_{\mathrm{h}}}
&&&\eta_{\alpha d^1\!d^2}&\eta_{\alpha d^0\!d^2}\ar[l]\\
\cdot\ar[u]^{[\alpha d^1]_{\mathrm{v}}} & \cdot\ar[l]^{[\alpha
d^2]_{\mathrm{h}}} \ar[u]_{[\alpha
d^0]_{\mathrm{v}}}\ar@{}[rrru]|{\textstyle \overset{\eta}\mapsto}&&
&\eta_{\alpha d^1\!d^1}\ar[u]\ar@{}[ru]|{\textstyle
[\eta_\alpha]}&\eta_{\alpha d^0\!d^1}\ar[l]\ar[u] }
$$
where, for any continuous map $\alpha:\Delta_3\to X$, the map
$\eta_\alpha:I\times I\to X$ is given by the formula
$$
\eta_\alpha(x,y)=\alpha(xy,(1-x)(1-y),(1-x)y,x(1-y)).
$$

To see that $\eta$ is well defined on squares in
$\adj\!\dec\mathrm{S}X$, suppose $[[\alpha_1]]=[[\alpha_2]]$. This
means that $[\alpha_1]_\mathrm{h}=[\alpha]_\mathrm{h}$ and
$[\alpha_2]_\mathrm{v}=[\alpha]_\mathrm{v}$, for some
$\alpha:\Delta_3\to X$,  in the bisimplicial set $\dec\mathrm{S}X$.
Then, there are maps $\beta,\gamma:\Delta_4\to X$ such that the
following equalities hold:
$$
\beta d^0=\alpha_1d^0s^0,\ \beta d^1=\alpha_1,\ \beta
d^2=\alpha=\gamma d^3,\ \gamma d^4=\alpha_2, \ \gamma d^2=\alpha_2
d^2 s^2;
$$
whence  the equalities of squares in $\dpi X$,
$[\eta_{\alpha_1}]=[\eta_\alpha]=[\eta_{\alpha_2}]$, follow from the
homotopies $F_1,F_2:I^2\times I\to X$, respectively defined by the
formulas
$$\begin{array}{l}F_1(x,y,t)=\beta(xy,t(1-x)(1-y),(1-t)(1-x)(1-y),(1-x)y,
x(1-y)),\\[4pt]
F_2(x,y,t)=\gamma(xy,(1-x)(1-y),(1-x)y,tx(1-y),(1-t)
x(1-y)).\end{array}$$

Most of the details to confirm  $\eta$ is actually a double functor are
routine and easily verifiable. We leave them to the reader since the
only ones with any difficulty are those a) and b) proven below.

\vspace{0.2cm} \noindent a) For $\omega:\Delta_4\to X$,
$[\eta_{\omega d^1}]=[\eta_{\omega
d^2}]\circ_{\mathrm{h}}[\eta_{\omega d^0}]$ and $[\eta_{\omega
d^3}]=[\eta_{\omega d^4}]\circ_{\mathrm{v}}[\eta_{\omega d^2}]$.

\vspace{0.2cm} \noindent b) For $g:\Delta_2\to X$, $[\eta_{ g
s^2}]=\mathrm{I}^\mathrm{v}(\eta_{gd^1},\eta_{gd^0})$ and $[\eta_{ g
s^0}]=\mathrm{I}^\mathrm{h}(\eta_{gd^2},\eta_{gd^1})$.

\vspace{0.2cm}
However, all these equalities in a) and b) hold thanks to the relative
homotopies $$\begin{array}{ll}H_1:\eta_{\omega d^1}\to \eta_{\omega
d^2}\circ_{\mathrm{h}}\eta_{\omega d^0},& H_2:\eta_{\omega d^3}\to
\eta_{\omega d^4}\circ_{\mathrm{v}}\eta_{\omega d^2},\\[4pt] H_3:\eta_{ g
s^2}\to e^\mathrm{v},& H_4:\eta_{ g s^0}\to
e^\mathrm{h},\end{array}$$ which are, respectively, defined by the
maps $H_i:I^2\times I\to X$ such that

 \noindent
$H_1(x,y,t)\!\!=\hspace{-5pt}\left\{\begin{array}{l}\hspace{-6pt}\omega\scriptstyle{((1\text{-}t)xy,2tx(x\text{+}y),(1\text{-}x)
(1\text{-}y)\text{+}tx(2x\text{-}
2\text{+}y),y(1\text{-}x)+tx(1\text{-}2x\text{-}y),x(1\text{-}y)\text{+}tx(1\text{-}2x\text{-}y)
)}\\[-2pt] \text{if } \scriptstyle{x\text{+}y\leq 1,\, x\leq y},\\[4pt]
\hspace{-6pt}\omega\scriptstyle{((1\text{-}t)xy,2ty(x\text{+}y),(1\text{-}x)(1\text{-}y)\text{+}ty(2y\text{-}
2\text{+}x),y(1\text{-}x)+ty(1\text{-}x\text{-}2y),x(1\text{-}y)\text{+}ty(1\text{-}x\text{-}2y)
)}\\[-2pt] \text{if }  \scriptstyle{x\text{+}y\leq 1,\, x\geq y},\\[4pt]
\hspace{-6pt}\omega\scriptstyle{(xy\text{+}t(1\text{-}y)(1\text{-}x\text{-}2y),2t(1\text{-}y)(2\text{-}x\text{-}y),
(1\text{-}t)(1\text{-}x)(1\text{-}y),y(1\text{-}x)\text{-}t(1\text{-}y)(2\text{-}x\text{-}2y),(1\text{-}y)
(x\text{-}t(2\text{-}x\text{-}2y)
)}\\[-2pt]\text{if }\scriptstyle{ x\text{+}y\geq 1,\, x\leq y},\\[4pt]
\hspace{-6pt}\omega\scriptstyle{(xy\text{+}t(1\text{-}x)(1\text{-}2x\text{-}y),2t(1\text{-}x)(2\text{-}x\text{-}y),
(1\text{-}t)(1\text{-}x)(1\text{-}y),(1\text{-}x)(y\text{-}t(2\text{-}2x\text{-}y)),x(1\text{-}y)
\text{-}t(1\text{-}x)(2\text{-}2x\text{-}y)) }\\
\text{if }\scriptstyle{ x\text{+}y\geq 1,\, x\geq y},
\end{array}\right.$

\noindent
$H_2(x,y,t)\!\!=\hspace{-5pt}\left\{\begin{array}{l}\hspace{-6pt}
\omega\scriptstyle{(t(1\text{-}x)(2x\text{-}1\text{-}y)\text{+}xy,(1\text{-}x)(1\text{-}y)+t(1\text{-}x)
(2x\text{-}1\text{-}y),(1\text{-}t)(1\text{-}x)y,2t(1\text{-}x)(1\text{-}x\text{+}y),x(1\text{-}y)
\text{+}t(1\text{-}x)(y\text{-}2x)
)}\\[-2pt] \text{if }\scriptstyle{x\text{+}y\geq 1,\, x\geq y},\\[4pt]
\hspace{-6pt}\omega\scriptstyle{(xy\text{+}ty(x\text{-}2y),(1\text{-}x)(1\text{-}y)\text{+}ty(x\text{-}2y),(1\text{-}t)(1\text{-}x)y,
2ty(1\text{-}x\text{+}y),
x(1\text{-}y)\text{+}ty(2y\text{-}1\text{-}x)
)}\\ \text{if }\scriptstyle{x\text{+}y\leq 1,\, x\geq y},\\[4pt]
\hspace{-6pt}\omega\scriptstyle{(xy\text{+}t(1\text{-}y)(2y\text{-}x\text{-}1),(1\text{-}x)(1\text{-}y)+t(1\text{-}y)(2y\text{-}1\text{-}x),
(1\text{-}x)y\text{+}t(1\text{-}y)(x\text{-}2y),2t(1\text{-}y)(1\text{+}x\text{-}y),(1\text{-}t)x(1\text{-}y)
)}\\[-2pt]\text{if }\scriptstyle{ x\text{+}y\geq 1,\, x\leq y},\\[4pt]
\omega\scriptstyle{(xy\text{+}tx(y\text{-}2x),(1\text{-}x)(1\text{-}y)+tx(y\text{-}2x),y(1\text{-}x)
\text{+}tx(2x\text{-}1\text{-}y),
2tx(1\text{+}x\text{-}y),(1\text{-}t)x(1\text{-}y) )}\\[-2pt]\text{if }\scriptstyle{
x\text{+}y\leq 1,\, x\leq y},
\end{array}\right.$

\noindent$H_3(x,y,t)\!\!=\hspace{-5pt}\left\{\begin{array}{lll}\hspace{-6pt}g\scriptstyle{
((1\text{-}t)xy,(1\text{-}x)(1\text{-}y)-txy,x\text{+}
y\text{+}2xy(t\text{-}1))}&\text{if}& \scriptstyle{x\text{+}y\leq
1},\\[4pt]\hspace{-6pt}
g\scriptstyle{(xy\text{+}t(x\text{+}y\text{-}1\text{-}xy),(1\text{-}t)(1\text{-}x)(1\text{-}y),
x\text{+}y\text{-}2xy\text{+}2t(1\text{-}x)(1\text{-}y))}&\text{if}&
\scriptstyle{x\text{+}y\geq 1},
\end{array}\right.
$

\noindent$H_4(x,y,t)\!\!=\hspace{-5pt}\left\{\begin{array}{lll}\hspace{-6pt}g\scriptstyle{
(1\text{-}x\text{-}y\text{+}2xy\text{+}2tx(1\text{-}y),(1\text{-}x)y\text{+}tx(
y\text{-}1),(1\text{-}t)x(1\text{-}y))}&\text{if}&
\scriptstyle{x\leq
y},\\[4pt]\hspace{-6pt}
g\scriptstyle{(1\text{-}x\text{-}y\text{+}2xy\text{+}2ty(1\text{-}x),(1\text{-}t)(1\text{-}x)y,
x(1\text{-}y)\text{+}ty(x\text{-}1))}&\text{if}& \scriptstyle{x\geq
y}.
\end{array}\right.
$

\vspace{0.2cm} This double functor, $\eta:\adj\dec\s X\to\dpi X$,
which is clearly natural on the topological space $X$, is actually a
weak equivalence since, for any $1$-simplex $u:\Delta_1\to X$ and
integer $i\geq 0$, the induced map $\pi_i \eta:\pi_i(\adj\dec\s X,
u)\to \pi_i(\dpi X,\eta_u)$ occurs in this commutative diagram
$$
\xymatrix{\pi_i(\adj\dec\s X,u)\ar[dd]_-{\textstyle
\pi_i\eta}&\ar[l]_(0.52){\text{Th.\ref{th0}}}|-\cong
\pi_i(|\adj\dec\s X|,u)&\pi_i(|\dec\s
X|,u)\ar[l]_(0.48){\text{Th.\ref{4.5}}}|-\cong \ar[d]^(0.52){\text{ Fact\ref{f18}(1)}}|-\cong \\
&&\pi_i(|\w\dec\s X|,u)\ar[d]^(0.52){\text{
Fact\ref{f18}(3)}}|-\cong\\ \pi_i(\dpi
X,\eta_u)\ar[r]^(0.52){\text{Th.\ref{ps}}}|-\cong&\pi_i(X,u(1,0))&\pi_i(|\s
X|,u(1,0))\ar[l]_(0.52){\text{Fact\ref{f1}(6)}}|-\cong}
$$
in which all other  maps are bijections (group isomorphisms for
$i\geq 1$) by the references in the labels.
\end{proof}

\end{document}